\documentclass[12pt]{article}

\usepackage{graphics,amsmath,amssymb,amsthm,mathrsfs}

\newcommand{\rn}[1]{\mathbb{R}^{#1}}

\newcommand{\pdydx}[2]{\frac{\partial #1}{\partial #2}}

\setlength{\paperwidth}{8.5in}
\setlength{\paperheight}{11.0in}
\setlength{\textwidth}{6.5in}
\setlength{\textheight}{9.0in}
\setlength{\oddsidemargin}{0in}
\setlength{\evensidemargin}{0in}
\setlength{\topmargin}{0in}
\setlength{\headsep}{0.0in}
\setlength{\headheight}{0.0in}
\setlength{\marginparwidth}{0in}
\setlength{\marginparsep}{0in}

\newtheorem{thm}{Theorem}[section]
\newtheorem{lemma}[thm]{Lemma}

\newtheorem{prop}[thm]{Proposition}
\theoremstyle{definition}

\numberwithin{equation}{section}

\begin{document}

\bibliographystyle{amsplain}

\title{A Bilinear Estimate for Biharmonic Functions \\ in Lipschitz Domains}

\author{Joel Kilty\footnote{Department of Mathematics,
 Centre College, Danville, KY 40422.
\emph{Email}: \texttt{joel.kilty@centre.edu}}
\and Zhongwei Shen\footnote{Corresponding author, Department of Mathematics,
University of Kentucky, Lexington, Kentucky 40506.
\emph{Telephone}: 1-859-257-3470.
 \emph{Email}: \texttt{zshen2@email.uky.edu}}}

\date{}

\maketitle

\begin{abstract}
\noindent
We show that a bilinear estimate for biharmonic functions in a Lipschitz domain $\Omega$
is equivalent to the solvability of the Dirichlet problem for the biharmonic equation
in $\Omega$. As a result, we prove that
for any given bounded Lipschitz domain $\Omega$ in $\rn{d}$ and $1<q<\infty$,
the solvability of the $L^{q}$ Dirichlet problem for $\Delta^2 u=0$ in $\Omega$
with boundary data in
$\mbox{\emph{WA}}^{1,q}(\partial\Omega)$
is equivalent to that of the $L^p$ regularity problem for $\Delta^2 u=0$ in $\Omega$
with boundary data in
$\mbox{\emph{WA}}^{2,p}(\partial\Omega)$,
where
$\frac{1}{p} +\frac{1}{q}=1$.
This duality relation, together with known results on the Dirichlet problem,
allows us to solve the $L^p$ regularity problem
for $d\ge 4$ and $p$ in certain ranges.

    \bigskip  \noindent \emph{MSC}(2000): 35J40. \\
    \bigskip \noindent \emph{Keywords}: biharmonic function;
Lipschitz domain; Dirichlet problem.

\end{abstract}

\section{Introduction}

Let $\Omega$ be a bounded Lipschitz domain in $\rn{d}$, $d\ge 2$.
We will use
$\mbox{\emph{WA}}^{k,p}(\partial\Omega)$
to denote the completion of
the set of arrays of functions
\begin{equation} \label{defnWhitneyArray}
\big\{\dot{f}=(f_{\alpha})_{|\alpha|\leq k}=(D^{\alpha} f|_{\partial\Omega} )_{|\alpha|\leq k}: \
f\in C^{\infty}_0(\rn{d}) \big\} \end{equation}
under the scale-invariant norm on
$\partial\Omega$,
\begin{equation} \label{whitneyArrayNorm}
\|\dot{f}\|_{k,p}
:= \sum_{|\alpha|\leq k} |\partial\Omega|^{\frac{k-|\alpha|}{1-d}} \|f_\alpha \|_{p},
\end{equation}
where
$\|\cdot \|_{p}$
denotes the norm in $L^p(\partial\Omega)$.
In this paper we are interested
in the Dirichlet problem for the biharmonic equation $\Delta^2 u=0$ in $\Omega$
with boundary data taken from $\mbox{\emph{WA}}^{k,p}(\partial\Omega)$ where $k=1$ or $2$.
Recall that the $L^p$ Dirichlet problem, denoted by $(D)_p$,
is said to be uniquely solvable
if given any
$\dot{f} \in \mbox{\emph{WA}}^{1,p}(\partial\Omega)$,
there exists a unique function $u$
such that
\begin{equation} \label{dirichletProb}
\left\{ \begin{array}{ll} \Delta^2 u=0 &
\mbox{ in } \Omega,\\
D^{\alpha} u = f_{\alpha} &
\mbox{ a.e. on } \partial\Omega \ \text{ for }\ |\alpha|\leq 1, \\
(\nabla u)^* \in L^p(\partial\Omega). & \end{array} \right.
\end{equation}
Moreover, the solution $u$ satisfies
the estimate \begin{equation} \label{Dirichlet-estimate}
\| (\nabla u)^*\|_p +|\partial\Omega|^{\frac{1}{1-d}}\| (u)^*\|_p
\leq C\sum_{|\alpha| \leq 1}  |\partial\Omega|^{\frac{1-|\alpha|}{1-d}}
\|f_{\alpha} \|_{p}.
\end{equation}
Here $(w)^*$ denotes the nontangential maximal function of $w$.

If the Dirichlet data in (\ref{dirichletProb}) are taken from
$\mbox{\emph{WA}}^{2,p}(\partial\Omega)$
instead of $\mbox{\emph{WA}}^{1,p}(\partial\Omega)$,
we may expect the solution to have one higher order regularity.
This is the so-called $L^p$ regularity problem.
Let $\nabla_t g$ denote the tangential derivatives of $g$ on $\partial\Omega$.
We say that the $L^p$ regularity problem for $\Delta^2 u=0$ in $\Omega$,
denoted by $(R)_p$,
is uniquely solvable if given
$\dot{g}=\{g_{\alpha}: |\alpha|\leq 2 \} \in \mbox{\emph{WA}}^{2,p}(\partial\Omega)$,
there exists a unique function $u$ such that
\begin{equation} \label{regularityProb}
\left\{ \begin{array}{ll} \Delta^2 u=0 & \mbox{ in } \ \Omega, \\
D^{\alpha} u = g_{\alpha} & \mbox{ a.e. on } \partial\Omega \ \text{ for }\  |\alpha|\leq 1, \\
(\nabla^2 u)^* \in L^p(\partial\Omega). & \end{array} \right.
\end{equation}
Moreover, the solution satisfies the estimate
\begin{equation} \label{regularity-estimate}
\aligned
\|(\nabla^2 u)^*\|_p
& +|\partial\Omega|^{\frac{1}{1-d}} \|(\nabla u)^*\|_p
+|\partial\Omega|^{\frac{2}{1-d}}\| (u)^*\|_p\\
&\leq C \bigg\{ \sum_{|\alpha| =1}
\|\nabla_t g_{\alpha} \|_p+
\sum_{|\alpha|\leq 1} |\partial\Omega|^{\frac{2-|\alpha|}{1-d}}\|g_{\alpha} \|_p
 \bigg\}
\endaligned
\end{equation}
and $\nabla^2 u$ has nontangential limits a.e. on $\partial\Omega$.

If $\Omega$ is a bounded $C^1$ domain, the $L^p$ Dirichlet problem $(D)_p$ for
the biharmonic equation is uniquely solvable for any $1<p<\infty$ \cite{cohen-gosselin, v:c-1}.
By the duality argument in \cite{verchota:polyharmonic} for star-shaped Lipschitz domains
(or by Theorem 1.1 below),
this implies that the $L^p$ regularity problem $(R)_p$ for the biharmonic equation
in $C^1$ domains is uniquely solvable for $1<p<\infty$.
However, it has been known that given any $p<2$ or $p$ large in the case $d\ge 4$,
there is a bounded Lipschitz domain for which
 the $L^p$ Dirichlet problem is not solvable
 (see e.g. \cite{dkv:biharmonic,pv:biharmonic}).

The main result in this paper is the following duality relation
between the Dirichlet and regularity
problems for the biharmonic equation in Lipschitz domains.

\begin{thm} \label{Main-Theorem}
     Let $1<p<\infty$ and $\frac{1}{p}+\frac{1}{q}=1$.
Let $\Omega$ be a bounded Lipschitz domain in $\rn{d}$, $d\ge 2$.
For the biharmonic equation $\Delta^2 u=0$,
 the Dirichlet problem $(D)_q$ in $\Omega$ is uniquely solvable
if and only if the regularity problem $(R)_p$ in $\Omega$ is
uniquely solvable.
\end{thm}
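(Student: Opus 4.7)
The title of the paper and the abstract point to a bilinear estimate on boundary traces of pairs of biharmonic functions as the bridge between $(D)_q$ and $(R)_p$. The strategy is to identify a bilinear form $B(\dot f,\dot g)$ on $\mbox{\emph{WA}}^{1,q}(\partial\Omega)\times\mbox{\emph{WA}}^{2,p}(\partial\Omega)$ for which the estimate
\begin{equation*}
|B(\dot f,\dot g)|\le C\,\|\dot f\|_{1,q}\,\|\dot g\|_{2,p}
\end{equation*}
is equivalent to the solvability of each of $(D)_q$ and $(R)_p$ separately; the desired equivalence $(D)_q\Leftrightarrow (R)_p$ then follows by transitivity. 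The natural candidate for $B$ comes from Green's second identity applied to two biharmonic functions $u,v$ in $\Omega$: formally
\begin{equation*}
\int_{\partial\Omega}\bigl[u\,\partial_\nu(\Delta v)-(\partial_\nu u)\,\Delta v\bigr]\,d\sigma=\int_{\partial\Omega}\bigl[v\,\partial_\nu(\Delta u)-(\partial_\nu v)\,\Delta u\bigr]\,d\sigma,
\end{equation*}
and tangential integration by parts along $\partial\Omega$ redistributes the unavailable normal derivatives $\partial_\nu(\Delta\,\cdot)$ onto tangential derivatives of the other factor, producing a form that depends only on the first-order jet of one factor and the second-order jet of the other.

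\textbf{From $(D)_q$ to $(R)_p$.} Assuming $(D)_q$ is solvable, I would first establish the bilinear estimate on a dense subclass: given $\dot f\in \mbox{\emph{WA}}^{1,q}(\partial\Omega)$, solve $(D)_q$ to obtain a biharmonic $u$ with $\|(\nabla u)^*\|_q\le C\|\dot f\|_{1,q}$ via (\ref{Dirichlet-estimate}), insert $u$ into the identity against smooth biharmonic $v$, and bound the boundary integral by Hölder's inequality in terms of $\|(\nabla u)^*\|_q$ and the $L^p$ norms of the second-order jet of $v$. Density extends the estimate to all $\dot g\in \mbox{\emph{WA}}^{2,p}(\partial\Omega)$. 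Next, for fixed $\dot g$, the map $\dot f\mapsto B(\dot f,\dot g)$ is a bounded linear functional on $\mbox{\emph{WA}}^{1,q}(\partial\Omega)$; an $L^p$ Riesz representation identifies it with the boundary pairing of a biharmonic function $u$ with $(\nabla^2 u)^*\in L^p(\partial\Omega)$ and prescribed second-order boundary jet $\dot g$, yielding the solution to $(R)_p$ together with the estimate (\ref{regularity-estimate}). Uniqueness for $(R)_p$ is obtained by a dual testing argument using the same identity, since a zero-data solution pairs trivially with every Dirichlet solution and $(D)_q$ separates points.

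\textbf{The reverse direction.} The implication $(R)_p\Rightarrow (D)_q$ is proved by the symmetric procedure: interchange the roles of $u$ and $v$ and of the two Whitney data spaces, deduce the same bilinear estimate from the a priori bound in $(R)_p$, and then represent the $(D)_q$ solution via duality on $\mbox{\emph{WA}}^{2,p}(\partial\Omega)$.

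\textbf{Main obstacle.} The principal technical difficulty is justifying the Green-identity calculation and the bilinear estimate rigorously in the Lipschitz setting, where solutions possess only $L^q$ or $L^p$ nontangential maximal function bounds and traces exist only in a nontangential sense. The raw formula contains $\partial_\nu(\Delta u)$, which is not controlled; the heart of the proof is the tangential integration by parts on $\partial\Omega$ that replaces these normal derivatives with combinations of tangential derivatives of the Dirichlet-side factor, producing a pairing in which every term is a priori controlled by $\|\dot f\|_{1,q}\,\|\dot g\|_{2,p}$. Carrying this out requires approximating $\Omega$ by smooth subdomains $\Omega_j\uparrow\Omega$ (or regularizing the data), establishing uniform nontangential maximal function bounds, and passing to the limit using a dominated convergence argument — the standard but delicate apparatus for non-smooth boundary value problems. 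Once the bilinear estimate is in place in the correct function-space framework, the duality that converts it into the solvability of each boundary value problem is essentially functional-analytic.
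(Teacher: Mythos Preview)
Your high-level architecture is correct and matches the paper: introduce a bilinear form via Green's identity and show that the estimate $|B(\dot f,\dot g)|\le C\|\dot f\|_{1,q}\|\dot g\|_{2,p}$ is equivalent to each of $(D)_q$ and $(R)_p$. The direction ``bilinear estimate $\Rightarrow (R)_p$'' (and symmetrically $\Rightarrow (D)_q$) is indeed handled essentially as you describe, via a Green representation formula and Hahn--Banach/Riesz representation applied to the functional the bilinear form induces.

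However, there is a genuine gap in your plan for the hard direction ``$(D)_q\Rightarrow$ bilinear estimate''. You propose to solve $(D)_q$ for $u$, insert it into the identity, and bound by H\"older against the second-order jet of $v$ after tangential integration by parts. This does not close: the bilinear form involves $\Delta u$ and $\partial_n\Delta u$, which are second and third derivatives of $u$, while $(D)_q$ controls only $(\nabla u)^*$. Tangential integration by parts on $\partial\Omega$ can move one tangential derivative from $u$ to $v$, but it cannot convert the normal derivative $\partial_n\Delta u$ into something controlled by $\nabla u$ alone. After all boundary manipulations you are still short by two orders.

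The paper closes this gap with two ingredients you have not identified. First, after localizing to a coordinate patch $\{x_d>\eta(x')\}$, it introduces \emph{antiderivatives} in the graph direction, $u_{-1}(x)=-\int_{x_d}^\infty u\psi\,ds$ and $u_{-2}(x)=\int_{x_d}^\infty\int_t^\infty u\psi\,ds\,dt$, so that $D_dD_d u_{-2}=u\psi$. The boundary integrals are rewritten (equation (5.11)) so that only $\nabla^3 u_{-2}$ and $\nabla^2 u_{-1}$ appear against tangential derivatives of $w$. Second, since $u_{-2}$ is an antiderivative only in $x_d$, one must still control, say, $D_iD_jD_k u_{-2}$ with purely horizontal indices. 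This is done via square function estimates and the Stein-type pointwise inequality $S(\nabla w)\le C\{\widetilde S(D_d w)+\sup_K|\nabla w|\}$ for biharmonic $w$ (Lemma~\ref{squareFunctionProp}); iterating it reduces $\|\mathcal M(\nabla^3 u_{-2})\|_q$ to $\|\nabla D_dD_d u_{-2}\|_q=\|\nabla(u\psi)\|_q$, to which the $(D)_q$ estimate finally applies (Lemma~\ref{technicalLemma}). This antiderivative-plus-square-function mechanism is the technical heart of the paper and is absent from your proposal; without it the order mismatch cannot be resolved. The implication $(R)_p\Rightarrow$ bilinear estimate uses the same machinery with one fewer antiderivative.
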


Theorem \ref{Main-Theorem} extends a similar result in \cite{kilty:higherOrderRegularity}
 for second order elliptic systems.
We mention that although it is not explicitly stated, the duality relation between
the Dirichlet and regularity problems was essentially
established by G. Verchota in the case of star-shaped Lipschitz domains
for Laplace's equation in \cite{verchota}.
Verchota's duality argument relies on
the representation of solutions by layer potentials in terms of their
Cauchy data,
and exploits the square function estimates and
a well known fact that there exists
a local pointwise estimate between a square function of $D_i u$ and
a square function of $D_d u$ \cite{stein:singularintegrals}.
A similar idea was used in \cite{verchota:polyharmonic}
to establish the solvability of the $L^2$ regularity problem
for the polyharmonic equation in Lipschitz domains.
The approach in \cite{kilty:higherOrderRegularity} and in this paper
uses the basic duality argument of Verchota.
One of our main contributions is a localization argument which allows us to
establish the duality relation
between the Dirichlet and regularity problems for each individual
Lipschitz domain, not just the class of all Lipschitz domains in $\rn{d}$.

More precisely, to prove Theorem \ref{Main-Theorem}, we introduce a bilinear form
\begin{equation} \label{bilinear-form}
\Lambda[f,g] := \lim_{m\rightarrow\infty} \int_{\partial\Omega_m}
\left\{ \pdydx{}{n}\Delta u\cdot v - \Delta u\cdot \pdydx{v}{n} \right\}\,d\sigma
=-\int_\Omega \Delta u\cdot \Delta v\, dx,
\end{equation}
for $f, g\in C_0^2 (\rn{d})$,
where $\Omega_m\uparrow \Omega$ and
$u,v$ are solutions of the $L^2$ regularity problem for the biharmonic
equation in $\Omega$ with boundary data $D^\alpha f,D^\alpha g$, respectively.
The key new idea in this paper
 is to show that both the solvability of $(D)_q$ and the solvability of $(R)_p$
are equivalent to the following bilinear estimate,
\begin{equation}\label{bilinear-estimate}
\big|\Lambda[f,g]\big|\le
C \bigg\{
\|\nabla_t \nabla f\|_p +|\partial\Omega|^{\frac{1}{{1-d}}} \| \nabla f\|_p
+|\partial\Omega|^{\frac{2}{1-d}}\| f\|_p\bigg\}
\bigg\{ \|\nabla g\|_q +|\partial\Omega|^{\frac{1}{1-d}} \| g\|_q\bigg\}.
\end{equation}

\begin{thm}\label{Main-Theorem-1}
Let $1<p<\infty$ and $\frac{1}{p} +\frac{1}{q}=1$.
For $\Delta^2 u=0$ in a bounded Lipschitz domain
$\Omega$ in $\rn{d}$, $d\ge 2$,  the following statements are equivalent.

\item{1.} The $L^q$ Dirichlet problem $(D)_q$ is uniquely solvable in $\Omega$.

\item{2.} The $L^p$ regularity problem $(R)_p$ is uniquely solvable in $\Omega$.

\item{3.} The bilinear estimate (\ref{bilinear-estimate}) holds for any $f, g\in C_0^\infty(\rn{d})$.

\end{thm}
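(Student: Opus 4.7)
I propose to prove $(1)\Leftrightarrow(3)$ and $(2)\Leftrightarrow(3)$ separately, with the bilinear estimate acting as the common intermediary. The underlying identity throughout is the two-sided representation
$$\Lambda[f,g]=\int_{\partial\Omega}\Bigl\{\pdydx{}{n}\Delta u\cdot v-\Delta u\cdot\pdydx{v}{n}\Bigr\}d\sigma=-\int_\Omega \Delta u\,\Delta v\,dx,$$
in which $u,v$ are the $L^2$-regularity solutions with boundary data $D^\alpha f$ and $D^\alpha g$; these exist and are unique by the $L^2$ theory, so $\Lambda$ is a well-defined bilinear form on $C_0^2(\rn{d})\times C_0^2(\rn{d})$.

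\textbf{From solvability to the bilinear estimate.} For $(2)\Rightarrow(3)$ I would take $u$ to be the $(R)_p$-solution. The piece $\int_{\partial\Omega}\Delta u\cdot(\partial v/\partial n)\,d\sigma$ is handled directly by H\"older: $\|\Delta u\|_{L^p(\partial\Omega)}\lesssim\|(\nabla^2 u)^*\|_p\lesssim\|\dot f\|_{2,p}$ via \eqref{regularity-estimate}, while $\partial v/\partial n=\nabla g\cdot n$ is prescribed data of norm $\lesssim\|\dot g\|_{1,q}$. The delicate piece is $\int_{\partial\Omega}(\partial\Delta u/\partial n)\,v\,d\sigma$, since $\partial\Delta u/\partial n$ is formally a third-order derivative of $u$. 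Here I would localize: cover $\partial\Omega$ by surface balls on which $\partial\Omega$ is a Lipschitz graph, introduce a smooth partition of unity, and in each chart choose a vector field $h$ transverse to $\partial\Omega$. Integrating $\mathrm{div}(\Delta u\,\Delta v\,h)$ over $\Omega$ in the style of a Rellich identity and using the harmonicity of $\Delta u$ (i.e., $\Delta^2 u=0$) trades the normal derivative of $\Delta u$ for tangential derivatives of $\nabla u$, modulo commutators still controlled by $(\nabla^2 u)^*\in L^p$ together with the known $L^2$-bounds for $v$ on annular pieces. The resulting bound has the form $\|(\nabla^2 u)^*\|_p\cdot\|\nabla v\|_{L^q(\partial\Omega)}$, which, since $\nabla v|_{\partial\Omega}=\nabla g$, yields \eqref{bilinear-estimate}. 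The implication $(1)\Rightarrow(3)$ is symmetric, with the roles of $u$ and $v$ exchanged and the $(D)_q$-estimate used on $v$ instead.

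\textbf{From the bilinear estimate to solvability.} For $(3)\Rightarrow(2)$ I would approximate $\dot f\in\mathrm{WA}^{2,p}$ by smooth data, let $u$ be the $L^2$-regularity solution, and derive the a priori bound $\|(\nabla^2 u)^*\|_p\lesssim\|\dot f\|_{2,p}$ by duality. Concretely, for any $\psi\in C_0^\infty(\rn{d})$ of unit norm in a suitable $L^q$-type test class on $\partial\Omega$, one identifies an appropriate boundary pairing against $\nabla^2 u$ with $\Lambda[f,g_\psi]$ for an auxiliary $g_\psi\in C_0^\infty(\rn{d})$ with $\|\dot g_\psi\|_{1,q}\lesssim 1$; invoking \eqref{bilinear-estimate} then produces the desired pointwise bound. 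Density plus the $L^2$ existence yields existence of the $(R)_p$-solution, and uniqueness follows from the $L^2$ theory. The implication $(3)\Rightarrow(1)$ is analogous, with dual exponents and $(f,g)$ swapped.

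\textbf{Main obstacle.} The crux is controlling the formally third-order boundary term $\partial\Delta u/\partial n$ without already assuming $(R)_p$ for $u$. In the star-shaped case, a global Rellich identity of Verchota absorbs this term at once; on a general bounded Lipschitz domain one must instead work locally through the partition of unity and reassemble a clean global estimate. The real challenge is producing the sharp tangential-derivative norm $\|\nabla_t\nabla f\|_p$ on the right-hand side, rather than a weaker boundary Sobolev norm, which is precisely where the localization argument must be executed with care.
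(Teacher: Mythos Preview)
Your overall architecture (routing everything through the bilinear estimate) matches the paper, but the execution you sketch for both directions has real gaps.

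\textbf{Solvability $\Rightarrow$ bilinear.} Your proposed Rellich identity, integrating $\mathrm{div}(\Delta u\,\Delta v\,h)$, does not isolate the boundary term $\int_{\partial\Omega}(\partial\Delta u/\partial n)\,v\,d\sigma$; it produces $\int_{\partial\Omega}\Delta u\,\Delta v\,(h\cdot n)\,d\sigma$, and $\Delta v$ on $\partial\Omega$ involves the second normal derivative of $v$, which is \emph{not} part of the Dirichlet data $\nabla g$. The paper takes a different route: after localizing to a graph chart, it introduces \emph{antiderivatives} $u_{-1}$, $u_{-2}$ in the $x_d$ variable (so that $D_d^2 u_{-2}=u\psi$), then uses tangential integration by parts $(n_kD_d-n_dD_k)$ to rewrite $\Lambda[f,g]$ in terms of $\nabla^3 u_{-2}$ and $\nabla^2 u_{-1}$. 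These are controlled via a technical lemma whose heart is the pointwise square-function comparison $S(\nabla w)\le C\widetilde S(D_d w)$ for biharmonic $w$ (Lemma~\ref{squareFunctionProp}), applied to $D_dD_d\widetilde u$ (for $(D)_q$) or $D_d\widetilde u$ (for $(R)_p$) after subtracting the Newtonian potential of $\Delta^2 u_{-k}$. The assumed solvability is then invoked on this auxiliary biharmonic function. None of the primitives construction or the square-function comparison appears in your sketch, and these are precisely the ``care'' you allude to.

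\textbf{Bilinear $\Rightarrow$ solvability.} Your duality idea is in the right spirit but too abstract: you need a concrete pairing that recovers $D_jD_k u(x)$ pointwise, since the goal is a nontangential maximal bound, not a weak-type dual norm. The paper uses the Green representation with the fundamental solution $\Gamma^x$ and an explicit algebraic identity to write $D_jD_ku(x)=I_{jk}(x)-\Lambda[g,D_jD_k\Gamma^x]$, where $I_{jk}$ is a layer potential in $\nabla_t\nabla u$. The bilinear estimate makes $\dot h\mapsto\Lambda[g,h]$ a bounded functional on $\mathrm{WA}^{1,q}$; Hahn--Banach then represents it by $L^p(\partial\Omega)$ densities, so $\Lambda[g,D_jD_k\Gamma^x]$ becomes an explicit sum of layer potentials whose nontangential maximal functions are bounded in $L^p$.

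\textbf{Uniqueness.} It does not simply ``follow from the $L^2$ theory'' when $p\ne 2$. The paper proves uniqueness for $(R)_p$ (respectively $(D)_q$) via a Green's-function argument that requires only the \emph{existence} of the $(D)_q$ (respectively $(R)_p$) solution with the special data $D^\alpha\Gamma^x$; this existence is supplied by the bilinear estimate itself, so the circularity closes.
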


There exists an extensive literature
on $L^p$ boundary values problems in Lipschitz domains
for $p$ near 2 and $d\ge 2$ (see e.g. \cite{kenig} for references).
In particular the solvability of the $L^2$ Dirichlet and regularity problems for the
biharmonic equation in Lipschitz domains was established
in \cite{dkv:biharmonic,verchota:polyharmonic}.
We mention that in the lower dimensional case $d=2,3$, the sharp ranges of
$p$'s for which the $L^p$ boundary value problems are solvable are known
for elliptic systems and higher order elliptic equations
(see \cite{dahlberg4,pv:biharmonic,pv:maximum,verchota:higherOrder}).
Results on the inhomogeneous Dirichlet problem for $\Delta^2$ as well as
for general higher order elliptic systems
may be found in \cite{adolfsson-pipher, mitrea-mitrea-pipher-wright,
mazya-mitrea-shaposhnikova, mitrea-mitrea}.
The $L^p$ boundary value problems in higher dimensional case $d\geq 4$ were studied by Shen
in \cite{shen:ellip,shen:necsuff,shen:biharmonic,shen:boundary}.
Related work may be found
in \cite{kilty,shen:stokes,wright} for the Stokes system.
These results extend the classical work of Dahlberg, Jerison, Kenig, and Verchota
in \cite{dahlberg2,dahlberg1,dahlberg,jerison,jerison-kenig, verchota}
on $L^p$ boundary value problems for Laplace's equation in Lipschitz domains.

More specifically,
for the biharmonic equation in Lipschitz domains, the $L^p$ Dirichlet problem is solvable
for $2-\varepsilon<p\le \infty$ if $d=2$ or $3$; for $2-\varepsilon<p<6 +\varepsilon$ if
$d=4$; for $2-\varepsilon<p<4+\varepsilon$ if $d=5,6,7$; and for $2-\varepsilon
<p< p_d +\varepsilon$ if $d\ge 8$, where
\begin{equation}\label{range of p}
p_d=2+\frac{4}{d-\lambda_d} \text{ and }
\lambda_d =\frac{d+10 +2\sqrt{2(d^2-d +2)}}{7},
\end{equation}
(see \cite{dkv:biharmonic,pv:biharmonic,pv:maximum,shen:ellip, shen:necsuff,shen:biharmonic};
the ranges of $p$ are known to be sharp for $2\le d\le 7$).
When combined with these results on the Dirichlet problem,
Theorem 1.1 allows us to solve the $L^p$ regularity problem for the biharmonic
equation in Lipschitz domains.
 The results in the following theorem are new in the case $d\ge 4$ and $p<2$.

\begin{thm} \label{regularitySolvability}
    Let $\Omega$ be a bounded Lipschitz domain in $\rn{d}$.
 Then the $L^p$ regularity problem for $\Delta^2 u=0$ in $\Omega$
is uniquely solvable if
    \begin{eqnarray*}
1<p< 2+\varepsilon & & \mbox{ if } \ d=2, 3,\\
        (6/5)-\varepsilon < p < 2+\varepsilon && \mbox{ if } \ d=4, \\
        (4/3)-\varepsilon < p < 2+\varepsilon && \mbox{ if }  \ d=5,6,7, \\
        q_d - \varepsilon < p < 2+\varepsilon && \mbox{ if } \ d\geq 8,
    \end{eqnarray*}
where  $\varepsilon>0$ depends on $\Omega$, $q_d=p_d/(p_d-1)$
and $p_d$ is given by (\ref{range of p}).
Moreover, the ranges of $p$'s are sharp for $2\le d\le 7$.
\end{thm}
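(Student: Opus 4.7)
The plan is to deduce Theorem \ref{regularitySolvability} directly from Theorem \ref{Main-Theorem} combined with the already-catalogued solvability ranges for the $L^q$ Dirichlet problem $(D)_q$. There is no new analysis of biharmonic functions to do here; the work is arithmetic in the H\"older exponents.

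First, I would invoke Theorem \ref{Main-Theorem}: for any bounded Lipschitz $\Omega\subset\rn{d}$ and any pair $1<p,q<\infty$ with $\frac{1}{p}+\frac{1}{q}=1$, the unique solvability of $(R)_p$ is equivalent to that of $(D)_q$. So it suffices to translate the known ranges of $q$ for which $(D)_q$ is solvable into the corresponding ranges of $p$ via $p=q/(q-1)$. Using the list recalled just before the statement of Theorem \ref{regularitySolvability}: in dimensions $d=2,3$ the range $2-\varepsilon<q\le\infty$ dualizes to $1<p<2+\varepsilon'$; in $d=4$ the range $2-\varepsilon<q<6+\varepsilon$ dualizes to $(6/5)-\varepsilon'<p<2+\varepsilon'$; in $d=5,6,7$ the range $2-\varepsilon<q<4+\varepsilon$ dualizes to $(4/3)-\varepsilon'<p<2+\varepsilon'$; and in $d\ge 8$ the range $2-\varepsilon<q<p_d+\varepsilon$ dualizes to $q_d-\varepsilon'<p<2+\varepsilon'$, with $q_d=p_d/(p_d-1)$ as defined. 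Here $\varepsilon'>0$ can be taken as a concrete function of $\varepsilon$ and the dimension, and it likewise depends only on the Lipschitz character of $\Omega$.

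For the sharpness claim in the range $2\le d\le 7$, I would argue by contrapositive. It is known (see \cite{dkv:biharmonic,pv:biharmonic}) that for each $d$ in this range there exists a bounded Lipschitz $\Omega$ for which $(D)_{q_0}$ fails at the stated endpoint $q_0$ (namely $q_0=6$ for $d=4$ and $q_0=4$ for $d=5,6,7$; and any $q_0<2$ in $d=2,3$ via the standard example). If $(R)_{p_0}$ were solvable on such $\Omega$ for the dual exponent $p_0=q_0/(q_0-1)$, then Theorem \ref{Main-Theorem} would force $(D)_{q_0}$ to be solvable on $\Omega$, a contradiction. Hence the stated ranges of $p$ cannot be enlarged beyond the endpoints $(6/5)$, $(4/3)$, and $2$, respectively.

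The only step that requires any care is the endpoint bookkeeping: one must verify that the ``$\varepsilon$-openness'' of the Dirichlet range is preserved under the map $q\mapsto q/(q-1)$, which is a monotone decreasing diffeomorphism of $(1,\infty)$ onto itself and therefore sends open sub-intervals containing $2$ to open sub-intervals containing $2$. I do not anticipate a genuine obstacle here; the main content of Theorem \ref{regularitySolvability} lies entirely in Theorem \ref{Main-Theorem} and in the pre-existing literature on $(D)_q$.
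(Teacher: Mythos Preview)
Your proposal is correct and is exactly the approach the paper takes: Theorem \ref{regularitySolvability} is not given a separate proof but is stated as an immediate consequence of Theorem \ref{Main-Theorem} together with the catalogued solvability ranges for $(D)_q$ (and their sharpness for $2\le d\le 7$). Your exponent bookkeeping and the sharpness-by-contrapositive argument are precisely what is intended.
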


For a general homogeneous elliptic system of order $2\ell$
with constant coefficients,
it was proved in \cite{shen:necsuff} that given any $p>2$,
the solvability of $(D)_p$ is
equivalent to a weak reverse H\"older condition with exponent $p$.
A similar result was established in \cite{kilty:higherOrderRegularity}
for $(R)_p$ with $p>2$.
It follows from Theorem \ref{Main-Theorem} that for the biharmonic equation,
the solvability of $(D)_p$ for some $p<2$ is equivalent to a
weak reverse H\"older condition with exponent $q=\frac{p}{p-1}$.
Since the weak reverse H\"older conditions have the self-improving property,
we obtain the following.

\begin{thm}\label{self-improving}
For the biharmonic equation in a bounded Lipschitz domain $\Omega$,
the set of all exponents $p\in (1,\infty)$ for which $(D)_p$ in $\Omega$ is uniquely
solvable is open.
\end{thm}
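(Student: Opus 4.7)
The plan is to take an arbitrary $p_0\in(1,\infty)$ at which $(D)_{p_0}$ is uniquely solvable in $\Omega$ and produce an open neighborhood of $p_0$ inside the solvability set. The argument splits naturally according to the position of $p_0$ relative to $2$, since the available characterizations of solvability via weak reverse Hölder inequalities live on opposite sides of the $L^2$ endpoint.

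If $p_0>2$, the plan is to invoke the theorem of \cite{shen:necsuff}, which for a constant-coefficient elliptic system (hence for $\Delta^2$) characterizes $(D)_{p_0}$ by a weak reverse Hölder inequality of exponent $p_0$ on $\nabla u$ for biharmonic functions vanishing to first order on a boundary surface ball. Such inequalities are self-improving in the Gehring sense, so the exponent can be upgraded to $p_0+\varepsilon$ for some $\varepsilon=\varepsilon(\Omega,p_0)>0$, and the characterization then returns $(D)_{p_0+\varepsilon}$. Combined with the $L^2$ solvability of \cite{dkv:biharmonic,verchota:polyharmonic} and interpolation on the nontangential maximal operator, this produces an open interval $[2,p_0+\varepsilon]$ of solvable exponents, hence an open neighborhood of $p_0$.

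If $p_0<2$, the plan is to dualize via Theorem \ref{Main-Theorem} to restate $(D)_{p_0}$ as $(R)_{q_0}$ with $q_0=p_0/(p_0-1)>2$. The parallel characterization of \cite{kilty:higherOrderRegularity} equates $(R)_{q_0}$ with a weak reverse Hölder inequality of exponent $q_0$ on $\nabla^2 u$, and the same Gehring-type self-improvement gives $(R)_{q_0+\varepsilon}$. Pulling back through Theorem \ref{Main-Theorem} then yields $(D)_p$ for some $p<p_0$, and interpolating with $(D)_2$ covers the whole interval $[p,2]$, again producing an open neighborhood of $p_0$. The boundary case $p_0=2$ is already recorded by the $(2-\varepsilon,2+\varepsilon)$ ranges cited after (\ref{range of p}).

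The substantive inputs are all available: Theorem \ref{Main-Theorem} itself, the two reverse Hölder characterizations of the solvability of $(D)_p$ and $(R)_q$ for $p,q>2$, and the Gehring lemma. The main obstacle is therefore not conceptual but a careful bookkeeping: one must verify that the reverse Hölder characterizations of \cite{shen:necsuff} and \cite{kilty:higherOrderRegularity} apply to $\Delta^2$ in precisely the form needed, that the self-improving exponent can be shipped through the duality of Theorem \ref{Main-Theorem} without loss, and that interpolation between the solvable endpoint $p=2$ and the upgraded endpoint preserves the scale-invariant estimates (\ref{Dirichlet-estimate}) and (\ref{regularity-estimate}) throughout the interior.
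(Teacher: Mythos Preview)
Your proposal is correct and follows essentially the same route as the paper: split according to whether $p_0$ lies above or below $2$, characterize solvability by the weak reverse H\"older inequalities of \cite{shen:necsuff} and \cite{kilty:higherOrderRegularity} (the latter reached through the duality of Theorem~\ref{Main-Theorem}), and then invoke Gehring-type self-improvement. The only cosmetic difference is that you appeal to interpolation to fill in the intermediate exponents, whereas the paper relies (implicitly) on the fact that a weak reverse H\"older inequality at exponent $p_0+\varepsilon$ trivially implies the same inequality at every smaller exponent, so the if-and-only-if characterization already delivers the entire interval without a separate interpolation step.
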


Finally we note that any convex domain is a Lipschitz domain. It was proved in \cite{shen:biharmonic}
that the $L^p$ Dirichlet problem for the biharmonic equation in convex domains
is solvable for $2\le p<\infty$. Theorem \ref{Main-Theorem}, together with
recent results on the boundedness of $\nabla^2 u$ for biharmonic functions in convex domains
in \cite{mayboroda} and Theorem 1.1
in \cite{kilty:higherOrderRegularity}, allows us to solve the $L^p$ Dirichlet and regularity
problems on convex domains for any $1<p<\infty$.

\begin{thm}\label{convex-theorem}
For the biharmonic equation in a convex domain $\Omega$ in $\rn{d}$,
 the $L^p$ Dirichlet and regularity problems are uniquely solvable for any $1<p< \infty$.
Furthermore, the solution of the $L^p$ Dirichlet problem with boundary data
$D^\alpha u\in L^\infty(\partial\Omega)$ for $|\alpha|\le 1$ satisfies
the weak maximum principle
\begin{equation}\label{weak-max}
\|\nabla u\|_{L^\infty(\Omega)}
\le C\, \| \nabla u\|_{L^\infty(\partial\Omega)}.
\end{equation}
\end{thm}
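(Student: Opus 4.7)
The plan is to assemble Theorem~\ref{Main-Theorem} with two external ingredients---the $L^p$ Dirichlet solvability on convex domains from \cite{shen:biharmonic} and the pointwise $\nabla^2u$ bound of \cite{mayboroda}---the latter processed through Theorem~1.1 of \cite{kilty:higherOrderRegularity}. The argument is a four-step chase around the duality of Theorem~\ref{Main-Theorem}, followed by a short approximation argument for the maximum principle.

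First, since any convex domain is bounded Lipschitz, \cite{shen:biharmonic} yields unique solvability of $(D)_p$ for every $2\le p<\infty$ on $\Omega$. Applying Theorem~\ref{Main-Theorem} immediately converts this into unique solvability of $(R)_q$ for every $1<q\le 2$, which handles the regularity problem at small exponents.

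Second, to handle $(R)_p$ in the remaining range $2<p<\infty$, I would appeal to \cite{mayboroda}, which gives pointwise $L^\infty$ control on $\nabla^2u$ for biharmonic functions on convex domains. This trivially implies a weak reverse H\"older inequality of arbitrarily large exponent for $(\nabla^2u)^*$ over boundary balls. Theorem~1.1 of \cite{kilty:higherOrderRegularity} characterises $(R)_p$ for $p>2$ precisely by such a reverse H\"older condition, so one concludes that $(R)_p$ is uniquely solvable for every $2\le p<\infty$. A second application of Theorem~\ref{Main-Theorem} then yields $(D)_q$ for every $1<q\le 2$, completing the proof of solvability across the full range $1<p<\infty$.

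Third, for the weak maximum principle (\ref{weak-max}), I would take boundary data $\dot f$ with $D^\alpha f\in L^\infty(\partial\Omega)$ for $|\alpha|\le 1$, approximate in the $L^p$ norm for some large but finite $p$, and solve the corresponding $(D)_p$ using the just-established Dirichlet theory. Applied to the approximating solutions, the pointwise bound of \cite{mayboroda} converts the $L^\infty$ control of $\nabla u$ on $\partial\Omega$ into $L^\infty$ control of $\nabla u$ in $\Omega$; the limit as the approximation is refined produces the desired solution to the Dirichlet problem with $L^\infty$ data and yields (\ref{weak-max}) with a constant depending only on $\Omega$.

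The main technical obstacle is verifying that \cite{mayboroda} supplies exactly the reverse H\"older hypothesis called for in Theorem~1.1 of \cite{kilty:higherOrderRegularity} (in particular, in the correct scale-invariant form over Carleson boundary balls), and, for the maximum principle, controlling the passage to the limit uniformly up to the possibly non-smooth portions of $\partial\Omega$. Once these compatibility issues are dispatched, the theorem reduces to an essentially mechanical assembly via the duality Theorem~\ref{Main-Theorem}.
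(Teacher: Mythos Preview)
Your first two steps---using \cite{shen:biharmonic} plus Theorem~\ref{Main-Theorem} for $(R)_p$ with $1<p\le 2$, and then using the \cite{mayboroda} estimate via the reverse H\"older characterisation of \cite{kilty:higherOrderRegularity} for $(R)_p$ with $p>2$, then duality again---match the paper's proof essentially verbatim. The paper spells out one detail you leave implicit: the local estimate from \cite{mayboroda} is stated for biharmonic $u$ with $u=|\nabla u|=0$ on a boundary patch, giving $\sup_{B(P,r)\cap\Omega}|\nabla^2 u|\le C r^{-2}(r^{-d}\int_{B(P,3r)\cap\Omega}|u|^2)^{1/2}$, and one must convert this to the reverse H\"older form (\ref{reverse-Holder-2}) in terms of $(\nabla^2 u)^*$ on $\partial\Omega$.

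Where you diverge from the paper is in the weak maximum principle. The paper does \emph{not} derive (\ref{weak-max}) from \cite{mayboroda}; it invokes the argument of Pipher--Verchota \cite{pv:maximum}, which produces (\ref{weak-max}) from solvability of $(R)_p$ for some $p>d-1$ together with $(D)_q$ for some $q<\tfrac{d-1}{d-2}$---both of which you have just established for all exponents. Your proposed route has a gap: the estimate in \cite{mayboroda} controls $\nabla^2 u$ locally for biharmonic functions that \emph{vanish to first order} on a boundary patch; it does not directly translate $L^\infty$ control of $\nabla u$ on $\partial\Omega$ into $L^\infty$ control of $\nabla u$ in $\Omega$ for general Dirichlet data. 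Bridging that gap would essentially require rederiving the Pipher--Verchota mechanism (localise, subtract boundary data, and use the Hessian bound to get pointwise decay of $\nabla u$ near $\partial\Omega$). So either cite \cite{pv:maximum} as the paper does, or be explicit that your approximation argument needs this additional machinery rather than a bare invocation of \cite{mayboroda}.
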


\noindent{\bf Remarks on notations.}
 The summation convention will be used throughout this paper.
Also, $\Omega$ will denote a bounded Lipschitz domain in $\rn{d}$, $d\ge 2$ and
$n$  the unit outward normal to $\partial\Omega$.
By rescaling we may always assume that $|\partial\Omega|=1$.
We will use $(D)_p$ and $(R)_p$ to denote the $L^p$ Dirichlet and regularity
problems respectively for $\Delta^2 u=0$ in $\Omega$.

\section{Preliminary estimates}

In this section we collect several estimates
that will be needed later in the paper.
We start with definitions of the nontangential maximal function and the square function.
Given a regular family of truncated nontangential cones
$\{ \gamma(Q): Q\in \partial\Omega\}$ for $\Omega$,
the nontangential maximal function $(w)^*$ is defined by
\begin{equation}
(w)^*(Q)=\sup\big\{ |w(x)|:\ x\in\gamma(Q)\big\}
\end{equation}
and the square function $S(w)$ by
\begin{equation}
S(w)(Q)=\left\{ \int_{\gamma(Q)} \frac{|\nabla w(x)|^2}{|x-Q|^{d-2}}\, dx\right\}^{1/2}.
\end{equation}
It was proved in \cite{pv:area-integral} that if $\Delta^2 u=0$ in $\Omega$, then
\begin{equation}\label{maximal-dominate}
\| S(\nabla u)\|_p \le C\, \|(\nabla u)^*\|_p
\end{equation}
and
\begin{equation}\label{square-dominate}
\| (\nabla u)^*\|_p \le C\, \bigg\{ \| S(\nabla u)\|_p
+\sup_K |\nabla u|\bigg\},
\end{equation}
where $0<p<\infty$ and $K\subset \Omega$ is compact.

\begin{lemma} \label{squareFunctionProp}
    Let $P\in \partial\Omega$.
Suppose that for some constants $C_0>C_1>10$,
we have $\Omega\subset B(P,C_0r)$
and
\begin{equation}\label{coordinate-represntation}
B(P,C_1r)\cap \Omega = B(P,C_1r) \cap \big\{(x^\prime,x_d): x_d>\eta(x^\prime)\big\},
\end{equation}
where $\eta:\rn{d-1}\rightarrow \rn{}$ is Lipschitz.
Then there exist two regular families of truncated nontangential
cones $\{ \gamma (Q): Q\in \partial\Omega\}$,
$\{ \widetilde{\gamma}(Q): Q\in \partial\Omega\}$
such that for any biharmonic function $u$ in $\Omega$,
\begin{equation} \label{squareFunctionLastVariable}
S(\nabla u) (Q) \leq C \bigg\{
\widetilde{S}\left(D_d u\right) (Q) +\sup_K |\nabla u|\bigg\}
\quad \text{ for any } \ Q\in B(P, 5r)\cap \partial\Omega,
\end{equation}
where $S(w)$ and $\widetilde{S}(w)$ denote
the square functions defined by using $\gamma(Q)$ and $\widetilde{\gamma}(Q)$ respectively,
and $K\subset \Omega$ is compact.
\end{lemma}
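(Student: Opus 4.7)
The plan is to localize Stein's pointwise comparison between square functions of tangential and transverse derivatives \cite{stein:singularintegrals}, adapted to biharmonic functions in a Lipschitz graph region. Using (\ref{coordinate-represntation}), I would work in coordinates where $\partial\Omega\cap B(P,C_1 r)$ is the graph $x_d=\eta(x')$ and $\Omega$ lies above, so that $e_d$ is the ``transverse'' direction. I would then choose apertures $\alpha<\beta$ depending only on the Lipschitz constant of $\eta$ and on $C_0,C_1$, and form truncated nontangential cones $\gamma(Q)\subset\widetilde{\gamma}(Q)$ with common axis $e_d$. Heights are arranged so that for every $Q\in B(P,5r)\cap\partial\Omega$ and every $x\in\gamma(Q)$, the vertical segment $\{x+se_d:0\le s\le h(x)\}$ stays in $\widetilde{\gamma}(Q)\cap\Omega$ and terminates in a fixed compact set $K\subset\Omega$; the separation $C_0>C_1>10$ is used here to guarantee uniformity over $Q$.

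With the geometry fixed, I would split the integrand as $|\nabla(\nabla u)|^2 = \sum_{i,j=1}^d (D_iD_ju)^2$. The sub-sum over pairs with $i=d$ or $j=d$ is bounded by $2|\nabla D_du|^2$ and, after integration against $|x-Q|^{2-d}$ over $\gamma(Q)\subset\widetilde{\gamma}(Q)$, is absorbed into $\widetilde{S}(D_du)(Q)^2$. What remains is to control, for each pair $i,j\in\{1,\dots,d-1\}$,
\begin{equation*}
\int_{\gamma(Q)} (D_iD_ju(x))^2\, |x-Q|^{2-d}\, dx
\end{equation*}
by the right-hand side of (\ref{squareFunctionLastVariable}).

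For this I would apply the fundamental theorem of calculus to $D_iD_ju$ along the vertical segment from $x$ to $x_\ast:=x+h(x)e_d\in K$,
\begin{equation*}
D_iD_ju(x) = D_iD_ju(x_\ast) - \int_0^{h(x)} D_dD_iD_ju(x+se_d)\, ds,
\end{equation*}
square, and apply Cauchy--Schwarz. The boundary term $|D_iD_ju(x_\ast)|$ is bounded by $C\sup_K|\nabla u|$ via interior regularity of biharmonic functions on a slight enlargement of $K$. The $L^2$ contribution of the integral term is converted, by a Fubini exchange, into a weighted integral of $|D_dD_iD_ju(y)|^2$ over $\widetilde{\gamma}(Q)$, which is then reduced to $\widetilde{S}(D_du)(Q)^2$ by an interior Caccioppoli inequality applied to the biharmonic function $D_du$ on Whitney balls inside $\widetilde{\gamma}(Q)$, using that $|y-Q|\approx\operatorname{dist}(y,\partial\Omega)$ inside a nontangential cone.

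The hard part is precisely this last step: vertical integration naturally produces the third-order quantity $D_dD_iD_ju$, whereas $\widetilde{S}(D_du)$ sees only second-order derivatives of $D_du$. Recovering the lost derivative via Caccioppoli for the biharmonic $D_du$ introduces an $r^{-2}$ factor that is compatible with the weight $|y-Q|^{2-d}$ only because $|y-Q|\approx\operatorname{dist}(y,\partial\Omega)$ inside the cone, so the bookkeeping has to be done through a Whitney decomposition of $\widetilde{\gamma}(Q)$. Arranging simultaneously that every Caccioppoli ball based at a point of $\gamma(Q)$ stays inside $\widetilde{\gamma}(Q)\cap\Omega$ and that the top endpoints of the vertical segments land in the same compact set $K$ is exactly what forces the careful two-aperture choice $\alpha<\beta$ and the use of the hypothesis $C_0>C_1>10$.
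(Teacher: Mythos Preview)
Your proposal is correct and follows precisely the route the paper indicates: the paper's proof consists only of the sentence that the estimate ``follows from the well known interior estimates for biharmonic functions by an argument found in pp.~214--216 of \cite{stein:singularintegrals}'', with all details omitted, and your outline is exactly a fleshing-out of that argument---Stein's vertical-integration comparison of tangential versus transverse square functions, with the harmonic interior estimates replaced by biharmonic ones (applied to $D_d u$). One small slip in wording: $\widetilde{S}(D_d u)$ sees \emph{first}-order derivatives of $D_d u$, not second-order, but your discussion of the one-derivative gap and its recovery via Caccioppoli shows you have the counting right.
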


\begin{proof}
Estimate (\ref{squareFunctionLastVariable}) follows from
the well known interior estimates for biharmonic functions
by an argument found in pp. 214-216 of
\cite{stein:singularintegrals}. We omit the details.
\end{proof}

\begin{lemma} \label{radialMaxLemma}
 Under the same assumptions as in Lemma \ref{squareFunctionProp}, we have
 \begin{equation} \label{radialMaxEst}
\int_{B(P,5r)\cap\partial\Omega} |\mathcal{M}(w)|^{p}\,d\sigma
\leq {C}\left\{ \int_{\Omega} |w|^{p}\,dx
+ \int_{\Omega} |\nabla w|^{p}\,dx \right\},
\end{equation}
where $w\in C^1(\Omega)$, $1\le p<\infty$, and
$\mathcal{M}(w)(Q) = \sup_{0<t<cr} |w(Q+te_d)|$.
\end{lemma}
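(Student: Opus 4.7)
The plan is to reduce the boundary integral to a 1D Sobolev embedding along vertical segments, then convert the resulting solid integrals back into $\Omega$ via the Lipschitz graph change of variables.

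First, I would fix the constant $c > 0$ small enough that the segment $\{Q+te_d: 0 < t < 2cr\}$ lies in $\Omega \cap B(P, C_1 r)$ for every $Q \in B(P, 5r) \cap \partial\Omega$. This is possible because of the graph representation \eqref{coordinate-represntation}: in those coordinates $Q = (x', \eta(x'))$ and $Q + te_d = (x', \eta(x') + t)$ lies above the graph, so in $\Omega$, provided $t < C_1 r - 5r$ (roughly), which is achievable since $C_1 > 10$. This is a minor but essential choice.

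Second, for each such $Q$, set $W(\tau) = w(Q + \tau e_d)$, $\tau \in [0, 2cr]$. By the fundamental theorem of calculus, for any $t \in (0, cr)$ and $s \in (cr, 2cr)$,
\[
|W(t)| \leq |W(s)| + \int_0^{2cr} |\nabla w(Q + \tau e_d)|\, d\tau.
\]
Averaging in $s$ over $(cr, 2cr)$ and taking the supremum in $t$, then applying Hölder's inequality to both terms,
\[
\mathcal{M}(w)(Q) \leq \frac{1}{cr}\int_{cr}^{2cr}|W(s)|\,ds + \int_0^{2cr}|\nabla w(Q+\tau e_d)|\,d\tau,
\]
which after raising to the $p$-th power yields
\[
\mathcal{M}(w)(Q)^p \leq C\,r^{-1}\int_0^{2cr}|w(Q+\tau e_d)|^p\,d\tau + C\,r^{p-1}\int_0^{2cr}|\nabla w(Q+\tau e_d)|^p\,d\tau.
\]
This is just the 1D Sobolev embedding $W^{1,p}(0, 2cr) \hookrightarrow L^\infty(0, cr)$ with the explicit scaling constants.

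Third, I would integrate the previous inequality over $Q \in B(P, 5r) \cap \partial\Omega$ and apply Fubini. Parametrizing the boundary piece as $Q = (x', \eta(x'))$ with $d\sigma(Q) \leq (1 + \|\nabla\eta\|_\infty^2)^{1/2}\,dx'$, and then substituting $x_d = \eta(x') + \tau$ in the $\tau$-integral, each of the two terms converts to an integral over a strip inside $\Omega$, majorized by $C\int_\Omega |w|^p\,dx$ and $C\int_\Omega|\nabla w|^p\,dx$ respectively (with constants depending on the Lipschitz character of $\eta$ and on $r$, which is harmless since we work with fixed $r$ and $|\partial\Omega| = 1$).

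There is no real obstacle here — the estimate is essentially the standard trace/Sobolev argument along vertical lines. The only point requiring a bit of care is the choice of $c$ in step one, ensuring vertical segments from $B(P,5r)\cap\partial\Omega$ of length up to $2cr$ remain inside the graph neighborhood $\Omega \cap B(P, C_1 r)$, so that the Fubini/change of variables in step three produces integrals over $\Omega$.
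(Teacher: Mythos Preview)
Your proposal is correct and follows essentially the same route as the paper: both arguments apply the fundamental theorem of calculus along the vertical segment $\{Q+te_d\}$, average (or integrate) in the second endpoint, use H\"older to pass to $p$th powers, take the supremum in $t$, and then integrate over $Q\in B(P,5r)\cap\partial\Omega$ via Fubini. The only cosmetic differences are that the paper normalizes $r=1$ at the outset and integrates the auxiliary endpoint over the whole interval $(0,c)$ rather than over $(cr,2cr)$, and uses $|D_d w|$ in place of the full $|\nabla w|$; your more explicit handling of the geometry and the graph change of variables is fine.
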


\begin{proof}
We may assume that $r=1$. Writing
$$
w(Q+t_1 e_d) - w(Q+te_d)
= \int_t^{t_1} D_d w(Q+se_d)\,ds
$$
for $0<t<t_1<c$
and using H\"{o}lder's inequality, we obtain
    \begin{eqnarray*}
        |w(Q+te_d)|^p &\leq& C\, \left\{ |w(Q+t_1e_d)|^p
+ \int_0^c |D_d w (Q+s e_d) |^{p}\,ds\right\}.
    \end{eqnarray*}
Next integrating both sides in $t_1$ gives
 $$
|w(Q+te_d)|^p \leq C\left\{ \int_0^c |w(Q+se_d)|^p\,ds +
\int_0^c |D_d w(Q+se_d)|^p\,ds\right\}.
$$
The desired estimate now follows by taking the supremum in $t$
and then integrating in $Q$ over $B(P,5r)\cap \partial\Omega$.
\end{proof}

Throughout the remainder of the paper we will often need 
to approximate a Lipschitz domain $\Omega$ with a sequence of
$C^{\infty}$ domains $\Omega_m$.  Here we briefly recall some of the most important properties of this
approximation scheme, which can be found in \cite{verchota}.

\begin{thm}
    Let $\Omega\subset \rn{d}$ be a bounded Lipschitz domain.  Then
        \begin{enumerate}
            \item There is a sequence of $C^{\infty}$ domains, 
$\Omega_j\subset \Omega$, and homeomorphisms, $\Lambda_j:\partial\Omega\rightarrow
            \partial\Omega_j$, such that 
$\sup_{Q\in\partial\Omega} |Q-\Lambda_j(Q)|\rightarrow 0$ as $j\rightarrow\infty$ and for all $j$ and all 
            $Q\in\partial\Omega$, $\Lambda_j(Q)\in \gamma(Q)$.  Here $\{ \gamma(Q): Q\in \partial\Omega\}$ 
is a family of regular nontangential cones associated with $\Omega$.
            
            \item There are positive functions $\omega_j:\partial\Omega\rightarrow \rn{}$ bounded away from zero and infinity uniformly in $j$ such that
            for any measurable set $E\subset \partial\Omega$, $\int_E \omega_j\,d\sigma = \int_{\Lambda_j(E)}\,d\sigma_j$, and so that $\omega_j\rightarrow 1$
            pointwise a.e. and in $L^q(\partial\Omega)$, $1\leq q < \infty$.
            
            \item The normal vectors to $\Omega_j$, $n(\Lambda_j(Q))$, converge pointwise a.e.  and in every $L^q(\partial\Omega)$, $1\leq q<\infty$ to
            $n(Q)$.
            
            \item There exists a $C^{\infty}$ vector field 
$\mathbf{h}$ in $\rn{d}$ such that for all $j$ and $Q\in \partial\Omega$ 
$$<\mathbf{h}(\Lambda_j(Q)),
            n(\Lambda_j(Q))>\geq c>0,
$$
 where $c$ depends only on $d$ and the Lipschitz character of $\Omega$.
        \end{enumerate}
\end{thm}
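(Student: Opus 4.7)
The plan is to follow the classical construction (cf.\ Verchota): cover $\partial\Omega$ by finitely many cylinders in which $\Omega$ is the region above a Lipschitz graph, mollify these graphs, and then glue together using a smooth transverse vector field. I would choose coordinate cylinders $\{Z_i\}$ so that after a rigid motion $\Omega\cap Z_i=\{x_d>\eta_i(x^\prime)\}\cap Z_i$ with $\|\nabla\eta_i\|_\infty\le L$, let $\phi_{1/j}$ be a standard nonnegative mollifier in $\rn{d-1}$, and set $\eta_i^{(j)}=\eta_i*\phi_{1/j}+c/j$ with $c>L$, so that $\eta_i^{(j)}>\eta_i$ on each $Z_i$ and the local smoothed graphs sit strictly inside $\Omega$. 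Standard mollifier estimates give $\|\eta_i-\eta_i^{(j)}\|_\infty=O(1/j)$ and $\nabla\eta_i^{(j)}\to\nabla\eta_i$ pointwise a.e., with the uniform bound $\|\nabla\eta_i^{(j)}\|_\infty\le L$.

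Next I would construct the transverse field $\mathbf{h}$ by averaging the inward vertical directions $-e_d^{(i)}$ of the individual charts against a smooth partition of unity $\{\psi_i\}$ subordinate to $\{Z_i\}$. In each chart $\langle -e_d^{(i)},n\rangle=(1+|\nabla\eta_i|^2)^{-1/2}\ge(1+L^2)^{-1/2}$ a.e., and convex combinations preserve the lower bound, so $\langle\mathbf{h},n\rangle\ge c_0>0$ a.e.\ on $\partial\Omega$. Then I would define the approximating boundary as the image of $\partial\Omega$ under the smooth map $\Lambda_j(Q)=Q+t_j(Q)\mathbf{h}(Q)$, where $t_j>0$ is a smooth function of size $O(1/j)$ built from $\{\psi_i\}$ and the shifts $c/j$ so that the image is a $C^\infty$ embedded hypersurface contained in $\Omega$ and agreeing in each chart, up to horizontal reparametrization, with the graph of $\eta_i^{(j)}$. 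Transversality and $t_j\to 0$ make $\Lambda_j$ a homeomorphism with $\sup_Q|Q-\Lambda_j(Q)|\to 0$; choosing the aperture of $\gamma(Q)$ large enough (depending only on $L$) so that a short segment along $\mathbf{h}(Q)$ lies in $\gamma(Q)$ gives $\Lambda_j(Q)\in\gamma(Q)$. Item 4 on $\partial\Omega_j$ follows because, by closeness of $\partial\Omega_j$ to $\partial\Omega$ in $C^0$ and smoothness of $\mathbf{h}$, the inner product $\langle\mathbf{h}(\Lambda_j(Q)),n(\Lambda_j(Q))\rangle$ differs from $\langle\mathbf{h}(Q),n(Q)\rangle$ by an amount that is a smooth function of $\nabla\eta_i^{(j)}$, uniformly bounded in $j$.

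For items 2 and 3, reducing to a coordinate chart expresses $n(\Lambda_j(Q))$ and the density $\omega_j$ of $\Lambda_j^\ast d\sigma_j$ with respect to $d\sigma$ as explicit continuous functions of $\nabla\eta_i(x^\prime)$ and of $\nabla\eta_i^{(j)}$ evaluated at a nearby point. Pointwise convergence $\nabla\eta_i^{(j)}\to\nabla\eta_i$ a.e.\ together with the uniform bound by $L$ and continuity of the formulas gives $\omega_j\to 1$ and $n(\Lambda_j)\to n$ pointwise a.e., the convergence in $L^q(\partial\Omega)$ for any $1\le q<\infty$ by dominated convergence, and a uniform two-sided bound on $\omega_j$. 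The principal obstacle is the gluing step that produces $t_j$ with three simultaneous requirements: smoothness of $\Lambda_j(\partial\Omega)$, strict interior containment, and chart-by-chart identification with the mollified graphs so that the pointwise formulas for $\omega_j$ and $n(\Lambda_j)$ behave as described; this is the heart of the approximation lemma and is precisely what forces one to go through the transverse-field reformulation rather than attempt a naive pasting of local mollified graphs.
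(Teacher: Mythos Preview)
The paper does not prove this theorem: it is stated without proof and attributed to Verchota (the sentence introducing it says the approximation scheme ``can be found in \cite{verchota}''). So there is no argument in the paper to compare against; your outline is essentially a sketch of the classical construction the paper is citing.

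One point in your sketch deserves care. You write $\Lambda_j(Q)=Q+t_j(Q)\mathbf{h}(Q)$ with $t_j$ a \emph{smooth} function and simultaneously ask that $\Lambda_j(\partial\Omega)$ be a $C^\infty$ hypersurface coinciding, in each chart, with the graph of the mollified $\eta_i^{(j)}$. These two requirements are in tension. If $t_j$ and $\mathbf{h}$ extend to smooth functions on $\rn{d}$, then $\Lambda_j$ is the restriction to $\partial\Omega$ of an ambient $C^\infty$ diffeomorphism, and the image of a merely Lipschitz hypersurface under such a map is again only Lipschitz, not $C^\infty$. Conversely, if you force the image to equal $\{x_d=\eta_i^{(j)}(x^\prime)\}$ in a chart where $\mathbf{h}=e_d$, you are forced to take $t_j\big((x^\prime,\eta_i(x^\prime))\big)=\eta_i^{(j)}(x^\prime)-\eta_i(x^\prime)$, which is only Lipschitz in $Q$. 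The standard resolution is to reverse the order: define $\partial\Omega_j$ \emph{first} as a genuinely smooth hypersurface (either by gluing the mollified local graphs with a partition of unity, or as a level set of a regularized distance function), and \emph{then} define $\Lambda_j(Q)$ as the first point where the ray $\{Q+t\,\mathbf{h}(Q):t>0\}$ meets $\partial\Omega_j$. This $\Lambda_j$ is only bi-Lipschitz, which is all that is claimed, and with that reordering items 1--4 go through essentially as you describe. Your last paragraph already flags the gluing as the crux; just be clear that it is $\partial\Omega_j$, not $t_j$, that must be smooth.
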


We will write $\Omega_j \uparrow \Omega$ to indicate such a sequence of approximating domains.
\begin{lemma} \label{sobolevLimit}
    Let $p\ge 2$ and $\frac{1}{p}+\frac{1}{q}=1$.
Suppose that $\Delta w=0$ in $\Omega$,
$w=g\in L^p(\partial\Omega)$ on $\partial\Omega$, and $(w)^*\in L^p(\partial\Omega)$.
Then $\ell=\pdydx{w}{n} \in W^{-1,p}(\partial\Omega)$ in the sense that
for any $f\in W^{1,q}(\partial\Omega)$,
\begin{equation}
\ell( f) = \lim_{m\to\infty} \int_{\partial\Omega_m} \pdydx{w}{n}\cdot F\,d\sigma
\end{equation}
where $\Omega_m\uparrow \Omega$ and
$F$ is any function with the property that
$F\in C^1(\Omega\setminus K)$ for some compact subset $K$ of $\Omega$,
$(\nabla F)^*\in L^{q}(\partial\Omega)$,
and $F=f$ a.e. on $\partial\Omega$ in the sense of nontangential convergence.
  Moreover,
\begin{equation}
\|\ell\|_{W^{-1,p}(\partial\Omega)}\le C\, \|g\|_p
\end{equation}
and
\begin{equation}
\ell(f) = \int_{\partial\Omega} w\cdot \pdydx{v}{n}\,d\sigma,
\end{equation}
 where
\begin{equation}\label{Laplace-regularity}
\left\{ \begin{array}{ll} \Delta v=0 & \mbox{ in } \ \Omega, \\
v=f & \mbox{ on } \ \partial\Omega, \\
(\nabla v)^* \in L^{q}(\partial\Omega).
\end{array}\right.
\end{equation}
\end{lemma}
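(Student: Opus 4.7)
The plan is to define $\ell$ explicitly using the harmonic extension $v$ of $f$, and then verify that the limiting expression in the statement equals $\ell(f)$ by combining Green's identity on the approximating domains $\Omega_m$ with the convergence properties of the approximation scheme.

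I would first set
\[
\ell(f) := \int_{\partial\Omega} g\cdot \frac{\partial v}{\partial n}\,d\sigma,
\]
where $v$ solves (\ref{Laplace-regularity}). Since $p \ge 2$ gives $q \le 2$, the known solvability of the $L^q$ regularity problem for Laplace's equation in Lipschitz domains yields $(\nabla v)^* \in L^q(\partial\Omega)$ with $\|(\nabla v)^*\|_q \le C\|f\|_{W^{1,q}(\partial\Omega)}$, so H\"older's inequality gives $|\ell(f)| \le \|g\|_p\|(\nabla v)^*\|_q \le C\|g\|_p\|f\|_{W^{1,q}}$, which is the required norm estimate. When $F=v$, both $w$ and $v$ are harmonic on $\Omega_m \subset \Omega$, and Green's second identity yields $\int_{\partial\Omega_m}(\partial w/\partial n_m)v\,d\sigma = \int_{\partial\Omega_m} w\,(\partial v/\partial n_m)\,d\sigma$. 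Transferring everything to $\partial\Omega$ via the homeomorphisms $\Lambda_m$ and weights $\omega_m$, the right-hand side converges to $\ell(f)$ by dominated convergence: the integrand converges pointwise a.e.\ by the nontangential limits of $w$ and $\nabla v$, while being uniformly dominated by $C(w)^*(Q)(\nabla v)^*(Q)\in L^1(\partial\Omega)$.

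The main step is to show the limit is independent of the particular admissible $F$. Writing $F = v + H$, the function $H$ is $C^1$ on $\Omega\setminus K$, has $(\nabla H)^*\in L^q(\partial\Omega)$, and admits nontangential limit zero a.e.\ on $\partial\Omega$. I would choose a smooth cutoff $\phi$ equal to $1$ in a neighborhood of $\partial\Omega$ and $0$ in a neighborhood of $K$, so that $\phi H$ extends to a $C^1$ function on $\Omega$ and agrees with $H$ on $\partial\Omega_m$ for all sufficiently large $m$. Green's first identity on $\Omega_m$ (using $\Delta w = 0$) then converts the boundary integral into $\int_{\Omega_m} \nabla w\cdot \nabla(\phi H)\,dx$, and the task reduces to showing this tends to $0$. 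By further multiplying the test function by a cutoff $\chi_\epsilon$ vanishing in an $\epsilon$-collar of $\partial\Omega$, the resulting object $\phi H\chi_\epsilon \in C_c^1(\Omega)$ produces zero interior integral against $\nabla w$, and the error between the two test functions is supported in the collar and involves $\nabla\chi_\epsilon$ of size $1/\epsilon$.

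The hard part will be this last estimate: because $\nabla w$ can genuinely blow up like $1/d(x)$ near $\partial\Omega$, one cannot pass to the limit by direct dominated convergence on the boundary integrals $\int_{\partial\Omega_m}$. Instead, I would control the collar contribution by using the interior Cauchy estimate $|\nabla w(x)| \le C d(x)^{-1}(w)^*(Q)$ together with the linear decay $|H(x)| \le C d(x)(\nabla H)^*(Q)$ that follows from the nontangential vanishing of $H$; the product is then uniformly majorized by $C(w)^*(Q)(\nabla H)^*(Q) \in L^1(\partial\Omega)$, and a Ces\`aro-type averaging across the collar (substituting $s = \epsilon\tau$ and applying dominated convergence in the resulting $\tau$-integral) gives the required vanishing. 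Once the limit identity is established for all admissible $F$, the representation $\ell(f) = \int_{\partial\Omega} w\cdot\partial v/\partial n\,d\sigma$ is immediate from the definition of $\ell$.
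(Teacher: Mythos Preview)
The paper does not actually prove this lemma; it simply cites Lemma~4.9 of Verchota's biharmonic paper and notes that the only substantive input is the solvability of the $L^q$ regularity problem for the Laplacian when $1<q\le 2$. So you are supplying an argument where the paper gives none, and your overall framework (define $\ell$ via the harmonic extension $v$, verify the identity for $F=v$ by Green's second identity and dominated convergence, then prove independence of $F$ by showing $\int_{\partial\Omega_m}\partial_n w\cdot H\,d\sigma\to 0$ for $H=F-v$) is exactly right.

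The gap is in your treatment of the collar term. After writing $\phi H=\phi H\chi_\epsilon+\phi H(1-\chi_\epsilon)$ and killing the first piece, the gradient of the second piece is
\[
\nabla\bigl(\phi H(1-\chi_\epsilon)\bigr)=(1-\chi_\epsilon)\,\nabla(\phi H)\;-\;\phi H\,\nabla\chi_\epsilon,
\]
and you only discuss the second summand. The first summand pairs $\nabla w$ with $\nabla H$ on the $\epsilon$-collar, and the only available pointwise bounds $|\nabla w|\le C\,d(x)^{-1}(w)^*$ and $|\nabla H|\le(\nabla H)^*$ give an integrand of size $d(x)^{-1}(w)^*(\nabla H)^*$, whose collar integral is logarithmically divergent. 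Your linear-decay estimate $|H|\le Cd(\nabla H)^*$ does not help here because it is $\nabla H$, not $H$, that appears. So as written the argument does not close.

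There is a simple repair that also shows your detour through Green's first identity is unnecessary: the direct dominated-convergence argument on $\partial\Omega_m$ that you dismissed actually works. You already have the uniform majorant
\[
\bigl|(\partial_n w\cdot H)(\Lambda_m(Q))\bigr|\le C\,(w)^*(Q)\,(\nabla H)^*(Q)\in L^1(\partial\Omega).
\]
The missing pointwise ingredient is that for a.e.\ $Q\in\partial\Omega$ one has $d(x)\,|\nabla w(x)|\to 0$ as $x\to Q$ nontangentially: apply the interior gradient estimate to the harmonic function $w-g(Q)$ to get $d(x)|\nabla w(x)|\le C\sup_{B(x,\,c\,d(x))}|w-g(Q)|$, and this supremum tends to $0$ along $\gamma(Q)$ because $w$ has nontangential limit $g(Q)$ with respect to a slightly wider cone. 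Writing $(\partial_n w\cdot H)(\Lambda_m(Q))=\bigl[d(\Lambda_m(Q))\partial_n w(\Lambda_m(Q))\bigr]\cdot\bigl[H(\Lambda_m(Q))/d(\Lambda_m(Q))\bigr]$ then gives a product of a factor tending to $0$ and a factor bounded by $C(\nabla H)^*(Q)$, hence pointwise convergence to $0$, and dominated convergence finishes the job.
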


\begin{proof} The proof uses the fact that the $L^q$ regularity problem (\ref{Laplace-regularity})
for Laplace's equation is solvable in any Lipschitz domain for $1<q\le 2$.
See Lemma 4.9 and its proof in \cite{verchota:biharmonic}.
\end{proof}

\begin{lemma} \label{nonTangLimitLemma}
    Suppose that $\Delta^2 u=F$ in $\Omega$ and $F\in C(\overline{\Omega})$. 
If the sequence $\{ \|\nabla^{3} u\|_{L^p(\partial\Omega_m)}\}$ is bounded for some $p>1$ and
$\Omega_m\uparrow \Omega$, then $\nabla^{3} u$ has nontangential limits
    a.e. on $\partial\Omega$.  Furthermore, $\nabla^{3} u \in L^p(\partial\Omega)$ and
    \begin{equation}\label{nontangentialestimate}
        \|(\nabla^{3} u)^*\|_p \leq C\big\{  \|\nabla ^{3} u\|_p +\| F\|_{L^\infty(\Omega)} \big\},
    \end{equation}
    where $C$ depends only on $d$, $p$ and the Lipschitz character of $\Omega$.
\end{lemma}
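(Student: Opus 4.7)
The plan is to reduce the claim to a Fatou-type theorem for third derivatives of biharmonic functions in a Lipschitz domain, by subtracting a Newtonian-type potential. Let $\Phi$ denote the fundamental solution of $\Delta^2$ in $\rn{d}$, so $|\nabla^3\Phi(x)|\leq C|x|^{1-d}$ for every $d\geq 2$. Define
\[
u_0(x)=\int_\Omega \Phi(x-y)\,F(y)\,dy,
\]
which solves $\Delta^2 u_0 = F\chi_\Omega$ on $\rn{d}$. Since $|x-y|^{1-d}$ is integrable in $y$ on bounded sets, standard potential theory gives $\nabla^3 u_0\in C(\overline\Omega)$ with $\|\nabla^3 u_0\|_{L^\infty(\Omega)}\leq C\|F\|_{L^\infty(\Omega)}$. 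Setting $u_1=u-u_0$, the hypothesis yields $\sup_m\|\nabla^3 u_1\|_{L^p(\partial\Omega_m)}<\infty$, and $u_1$ is biharmonic in $\Omega$.

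For each multi-index $\alpha$ with $|\alpha|=3$, the derivative $w_\alpha=D^\alpha u_1$ is itself biharmonic in $\Omega$ since $D^\alpha$ commutes with $\Delta^2$. Pulling back via the homeomorphism $\Lambda_m:\partial\Omega\to\partial\Omega_m$ from the approximation scheme quoted just above gives a sequence uniformly bounded in $L^p(\partial\Omega)$, and since $p>1$, weak-$*$ compactness yields a limit $g_\alpha\in L^p(\partial\Omega)$ along a subsequence. Interior mean-value estimates for biharmonic functions, applied on balls of radius comparable to $\mathrm{dist}(\cdot,\partial\Omega)$, combined with the uniform Lipschitz character of the $\Omega_m$, dominate $w_\alpha$ along nontangential cones by an $L^p(\partial\Omega)$ function, and show that $w_\alpha$ has nontangential limits a.e.\ on $\partial\Omega$ equal to $g_\alpha$.

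For the maximal function estimate, on each smooth $\Omega_m$ I would represent $w_\alpha$ via biharmonic layer potentials whose densities are determined by the Cauchy data of $u_1$ on $\partial\Omega_m$; the $L^p$-boundedness of the associated maximal singular integrals, with operator norms depending only on the Lipschitz character of $\Omega$ (and not on $m$, thanks to the uniform approximation scheme), produces
\[
\|(w_\alpha)^*_m\|_{L^p(\partial\Omega_m)}\leq C\,\|w_\alpha\|_{L^p(\partial\Omega_m)}.
\]
Passing to the limit $m\to\infty$ via Fatou's lemma, summing over $|\alpha|=3$, and restoring the $L^\infty$ bound on $\nabla^3 u_0$ then yields (\ref{nontangentialestimate}). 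The main obstacle is precisely this uniform-in-$m$ mapping property of the biharmonic layer potentials on the approximating smooth domains: one needs the constants in the classical $C^\infty$ theory to be controlled solely by the Lipschitz character of $\Omega$, which is the technical heart of the lemma; once this is in hand, the pointwise bound on $\nabla^3 u_0$ and the Fatou-type convergence of the previous step are essentially routine.
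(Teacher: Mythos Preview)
Your reduction to the biharmonic case by subtracting the potential $u_0$ is exactly what the paper does; the paper then simply cites Theorem~2.1 of \cite{kilty:higherOrderRegularity} for the biharmonic Fatou theorem and remarks that its proof goes through under the weaker hypothesis that $\{\|\nabla^3 u\|_{L^p(\partial\Omega_m)}\}$ is bounded.

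Your sketch of that cited result is plausible for the maximal-function bound, but the argument you give for a.e.\ nontangential convergence has a gap. Interior mean-value inequalities together with weak-$*$ compactness of $w_\alpha\circ\Lambda_m$ in $L^p(\partial\Omega)$ do not by themselves produce pointwise nontangential limits: a weak limit need not be a pointwise limit, and domination by an $L^p$ majorant does not force convergence along cones. The standard route reverses your order of steps: one first obtains the maximal estimate $\|(\nabla^3 u_1)^*\|_p\le C\|\nabla^3 u_1\|_p$ via the Green/layer-potential representation (your step~4), and only then deduces a.e.\ nontangential limits by approximating with solutions smooth up to the boundary and using the maximal bound to control the error.

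A second, smaller issue: the displayed inequality $\|(w_\alpha)^*_m\|_{L^p}\le C\|w_\alpha\|_{L^p}$ is false for an \emph{individual} biharmonic function $w_\alpha$, since biharmonic boundary data consist of two pieces and the trace of $w_\alpha$ alone does not determine it. What actually works is the Green representation of $u_1$ itself: after differentiating three times and integrating by parts on $\partial\Omega_m$ to rebalance derivatives, $\nabla^3 u_1$ is expressed as Calder\'on--Zygmund operators acting on the full array $(\nabla^3 u_1)|_{\partial\Omega_m}$ plus lower-order terms absorbed by interior values. That is presumably what you mean by ``Cauchy data of $u_1$'', but the inequality as written does not follow from it.
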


\begin{proof} Let $\Gamma (x)$ denote the fundamental solution for the operator
$\Delta^2$ in $\rn{d}$, with pole at the origin.
By substracting $w(x)=\int_\Omega \Gamma (x-y) F(y)\, dy$ from $u$, one may assume that $F=0$.
In this case the conclusion of the lemma is proved in \cite{kilty:higherOrderRegularity} (see Theorem 2.1)
under the assumption $(\nabla^3 u)^*\in L^p(\partial\Omega)$.
An inspection of the proof of Theorem 2.1 shows that
the same argument
goes through under the weaker assumption that the sequence
$\{ \|\nabla^{2\ell -1} u\|_{L^p(\partial\Omega_m)}\}$ is bounded, where $u$ is a solution
of a general homogeneous elliptic system of order $2\ell$ with constant coefficients.
We omit the details.
\end{proof}

We end this section with a Green's identity for the bi-Laplacian $\Delta^2$,
\begin{equation}\label{Green's-identity}
\aligned
\int_{\partial\Omega} \bigg\{
\frac{\partial}{\partial n} \Delta u \cdot v
- & \Delta u\cdot \frac{\partial v}{\partial n}\bigg\} d\sigma
-\int_{\partial\Omega}
\left\{
\frac{\partial}{\partial n} \Delta v \cdot u-\Delta v\cdot \frac{\partial u}{\partial n}\right\}
d\sigma\\
& =\int_{\Omega}
\big\{ v\Delta^2 u -u\Delta^2 v\big\} dx
\endaligned
\end{equation}
for any $u, v\in C^4(\overline{\Omega})$.

\section{Uniqueness}

In this section we prove two theorems on the uniqueness of the Dirichlet and regularity
problems for the biharmonic equation in Lipschitz domains.
Let $\Gamma ^x(y)=\Gamma (y-x)$, where $\Gamma(y)$ denotes the fundamental solution of
$\Delta^2 u=0$ in $\rn{d}$ with pole at the origin.

\begin{thm}\label{regularity-uniqueness}
Let $1<p<\infty$ and $\frac{1}{p}+\frac{1}{q}=1$.
Suppose that for each $x\in \Omega$, the $L^{q}$ Dirichlet
problem for the biharmonic equation
in $\Omega$ with boundary data $D^\alpha \Gamma^x$
has a solution $V^x$. Then the solution of the $L^{p}$
regularity problem for the biharmonic equation in $\Omega$
with any given boundary data in $\mbox{\emph{\it WA}}^{2,p}(\partial\Omega)$,
 if it exists,
is unique.
\end{thm}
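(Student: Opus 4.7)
The plan is to establish uniqueness via a Green's function representation. By linearity, it suffices to show that a biharmonic function $u$ on $\Omega$ with $D^\alpha u = 0$ nontangentially on $\partial\Omega$ for $|\alpha|\le 1$ and $(\nabla^2 u)^* \in L^p(\partial\Omega)$ (so also $(\nabla u)^*, (u)^* \in L^p$ by the regularity estimate) must vanish.

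For each $x \in \Omega$, the hypothesis furnishes an $L^q$ Dirichlet solution $V^x$ with data $D^\alpha \Gamma^x$, so that $G := \Gamma^x - V^x$ is biharmonic on $\Omega\setminus\{x\}$, satisfies $\Delta^2 G = \delta_x$ distributionally, has $D^\alpha G|_{\partial\Omega} = 0$ for $|\alpha|\le 1$, and has $(G)^*, (\nabla G)^* \in L^q(\partial\Omega)$ (the $\Gamma^x$-contribution being smooth on a neighbourhood of $\partial\Omega$ since $x$ is interior). I would apply Green's identity (\ref{Green's-identity}) to $(u, G)$ on $\Omega_m \setminus B(x,\varepsilon)$, where $\Omega_m \uparrow \Omega$ is the approximating $C^\infty$ sequence. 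The volume integral vanishes, and letting $\varepsilon \to 0$ via the standard fundamental-solution normalization of $\Gamma^x$ at $x$ (the $V^x$-part is regular there) yields, for every $m$ sufficiently large,
\[
u(x) = \int_{\partial\Omega_m}\bigl\{u\,\partial_n\Delta G - G\,\partial_n\Delta u - \partial_n u\cdot \Delta G + \partial_n G\cdot \Delta u\bigr\}\,d\sigma.
\]
The remaining task is to show that the right-hand side tends to $0$ as $m \to \infty$, forcing $u(x) = 0$.

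I would process the four summands in parallel. The term $\int \partial_n G \cdot \Delta u\,d\sigma$ converges to $0$ by H\"older and dominated convergence: $\partial_n G \to 0$ nontangentially, dominated by $(\nabla G)^* \in L^q$, while $|\Delta u|$ is dominated by $(\nabla^2 u)^* \in L^p$. The term $\int G \cdot \partial_n\Delta u\,d\sigma$ is handled by Lemma \ref{sobolevLimit} applied to the harmonic function $\Delta u$ with $G$ serving as an admissible test extension satisfying $G|_{\partial\Omega} = 0$ and $(\nabla G)^* \in L^q$; the limit is the distributional pairing against the vanishing boundary trace, hence zero. For the remaining two summands, I would split $\Delta G = \Delta\Gamma^x - \Delta V^x$ and $\partial_n\Delta G = \partial_n\Delta\Gamma^x - \partial_n\Delta V^x$. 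The $\Gamma^x$-contributions, involving smooth bounded kernels on $\partial\Omega$, vanish in the limit by dominated convergence against the nontangentially vanishing $u, \partial_n u$. For the $V^x$-contributions, whose second and third derivatives are not a priori controlled, I would invoke Green's identity (\ref{Green's-identity}) applied to the biharmonic pair $(u, V^x)$ on $\Omega_m$,
\[
\int_{\partial\Omega_m}\bigl\{u\,\partial_n\Delta V^x - V^x\partial_n\Delta u - \partial_n u\cdot \Delta V^x + \partial_n V^x\cdot \Delta u\bigr\}\,d\sigma = 0,
\]
which trades the bad factors $\Delta V^x, \partial_n\Delta V^x$ for integrals against $V^x, \partial_n V^x$ (uniformly bounded in $L^q(\partial\Omega_m)$ by the Dirichlet estimate $(\nabla V^x)^* \in L^q$) paired with $\Delta u, \partial_n\Delta u$; the $\partial_n\Delta u$ pairings are again absorbed by Lemma \ref{sobolevLimit}, and the $\Delta u$ pairings converge by dominated convergence.

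The main obstacle will be the bookkeeping in the last step. After splitting along $\Gamma^x$ and $V^x$ and applying Lemma \ref{sobolevLimit} wherever $\partial_n\Delta u$ appears, each harder pairing reduces to an integral of $\Delta u$ against the normal derivative of a specific harmonic-Dirichlet extension; the identities $V^x|_{\partial\Omega} = \Gamma^x|_{\partial\Omega}$ and $\partial_n V^x|_{\partial\Omega} = \partial_n\Gamma^x|_{\partial\Omega}$ then force those harmonic extensions to coincide, and the various contributions cancel exactly. This cancellation is precisely where the hypothesis of $L^q$ Dirichlet solvability for the data $D^\alpha \Gamma^x$ enters essentially, and its verification is the most delicate part of the argument.
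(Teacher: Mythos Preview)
Your boundary-integral route has a genuine gap at the final cancellation step. Observe first that Green's identity for the biharmonic pair $(u,V^x)$ on $\Omega_m$ makes the \emph{entire} $V^x$-contribution to your four-term boundary formula vanish identically for each $m$; hence your representation with $G=\Gamma^x-V^x$ is term-by-term equal to the one with $\Gamma^x$ alone. Now trace your bookkeeping: Terms 2 and 4 (paired with $G$) tend to zero, and the $\Gamma^x$-parts of Terms 1 and 3 tend to zero because $u,\partial_n u\to 0$. What remains after your Green's-identity trade is $-\lim_m\int_{\partial\Omega_m}\{V^x\,\partial_n\Delta u-\partial_n V^x\cdot\Delta u\}\,d\sigma$. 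By Lemma~\ref{sobolevLimit} the first summand depends only on $V^x|_{\partial\Omega}=\Gamma^x|_{\partial\Omega}$, and by dominated convergence the second tends to $\int_{\partial\Omega}\partial_n\Gamma^x\cdot\Delta u$; so the leftover equals $-\lim_m\int_{\partial\Omega_m}\{\Gamma^x\,\partial_n\Delta u-\partial_n\Gamma^x\cdot\Delta u\}\,d\sigma=\int_\Omega\Delta\Gamma^x\cdot\Delta u\,dx$, which by the very representation you started from equals $u(x)$, not zero. Your ``harmonic extensions coincide'' heuristic fails because $\int\partial_n V^x\cdot\Delta u$ is not the Lemma~\ref{sobolevLimit} pairing against a harmonic extension---$V^x$ is biharmonic, not harmonic---so there is nothing for it to cancel against. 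A secondary issue: Lemma~\ref{sobolevLimit} is stated only for $p\ge 2$ (it rests on $L^q$ Laplace regularity for $q\le 2$), whereas the only nontrivial range of the theorem is $p<2$, since for $p\ge 2$ uniqueness is immediate from the known $L^2$ theory.

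The paper avoids all of this by never passing to $\partial\Omega_m$. It writes $u(x)=\int_\Omega G^x\,\Delta^2(u\varphi_\varepsilon)\,dy$ for a cutoff $\varphi_\varepsilon$ supported away from the boundary strip of width $\varepsilon/2$, integrates by parts once inside $\Omega$, and uses only the pointwise bounds $|G^x|\le C\varepsilon\,(\nabla G^x)^*_\varepsilon$, $|u|\le C\varepsilon^2(\nabla^2 u)^*$, $|\nabla u|\le C\varepsilon(\nabla^2 u)^*$ on the strip (all consequences of the vanishing Cauchy data). This yields $|u(x)|\le C\int_{\partial\Omega}(\nabla G^x)^*_\varepsilon\,(\nabla^2 u)^*\,d\sigma$, and dominated convergence as $\varepsilon\to 0$ finishes the proof for every $1<p<\infty$, with no appeal to Lemma~\ref{sobolevLimit} and no boundary cancellations.
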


\begin{proof}
Fix $x\in \Omega$ and let $G^x=\Gamma ^x-V^x$.
For $0<\varepsilon<(1/4)\text{dist}(x, \partial\Omega)$,
choose $\varphi =\varphi_\varepsilon$ so that
$\varphi=1$ in $\{ y\in \Omega: \text{dist}(y, \partial\Omega)\ge 2\varepsilon\}$,
$\varphi=0$ in $\{ y\in \Omega: \text{dist}(y,\partial\Omega)\le (1/2)\varepsilon\}$,
and $|D^\alpha \varphi|\le C\varepsilon^{-|\alpha|}$ for $|\alpha|\le 4$.
Suppose now that $\Delta^2 u=0$ in $\Omega$. We may write
$$
\aligned
u(x) & =\int_\Omega G^x \Delta^2 (u\varphi)\, dy\\
&=\int_\Omega
G^x \big\{ 4D_i\Delta u \cdot D_i \varphi +2\Delta u\cdot \Delta \varphi
+4 D_iD_j u\cdot D_iD_j \varphi
+4D_i u \cdot D_i \Delta \varphi
+u\Delta^2 \varphi\big\}\, dy\\
&=-4\int_\Omega D_i G^x \cdot\Delta u \cdot D_i\varphi \, dy\\
&\ \ \ \ \ \ \
+\int_\Omega
G^x \big\{ -2 \Delta u\cdot \Delta \varphi
+ 4 D_iD_j u\cdot D_iD_j \varphi
+4D_i u \cdot D_i \Delta \varphi
+u\Delta^2 \varphi\big\}\, dy.
\endaligned
$$
It follows that
\begin{equation}\label{Green-Estimate-1}
|u(x)|
\le C
\int_{K_\varepsilon}
\big\{ \varepsilon^{-1}|\nabla G^x|\, |\Delta u|
+\varepsilon^{-2} |G^x|\, |\nabla^2 u|
+\varepsilon^{-3} |G^x|\, |\nabla u|
+\varepsilon^{-4} |G^x|\, |u|\big\} \, dx,
\end{equation}
where $K_\varepsilon =\{ x\in \Omega: (1/2)\varepsilon\le
\text{dist}(x, \partial\Omega)\le 2 \varepsilon\}$.

Let
\begin{equation}\label{truncted-maximal}
(w)^*_\varepsilon (Q)
=\sup\big\{ |w(x)|:\ x\in \gamma(Q)
\text{ and } \text{dist}(x, \partial\Omega)\le 2\varepsilon\big\}.
\end{equation}
Since $G^x=0$ on $\partial\Omega$, we have $|G^x(y)|\le C \varepsilon
\, (\nabla G^x)^*_\varepsilon (Q)$ for
any $y\in \gamma (Q)$ with dist$(y, \partial\Omega)\le 2\varepsilon$.
Similarly, if $u=|\nabla u|=0$ on $\partial\Omega$, we have
$|u(y)|\le C\varepsilon^2 (\nabla^2 u)^*(Q)$ and $|\nabla u(y)|\le C\varepsilon (\nabla^2 u)^*(Q)$
for any $y\in \gamma(Q)$ with dist$(y, \partial\Omega)\le 2\varepsilon$.
This, together with (\ref{Green-Estimate-1}), shows that
\begin{equation}\label{Green-Estimate-2}
|u(x)|\le C\int_{\partial\Omega} (\nabla G^x)^*_\varepsilon (\nabla^2 u)^*\, d\sigma
\end{equation}
if $\Delta^2 u=0$ and $u=|\nabla u|=0$ on $\partial\Omega$.

Finally, if $u$ is a solution of the $L^p$ regularity problem with zero data, then
$$
(\nabla G^x)^*_\varepsilon (\nabla^2 u)^* \in L^1 (\partial\Omega),
$$
as
$(\nabla G^x)^*_\varepsilon \in L^{q}(\partial\Omega)$
and $(\nabla^2 u)^*\in L^p(\partial\Omega)$.
Furthermore, since $|\nabla G^x|=0$ on $\partial\Omega$, we see that
$(\nabla G^x)^*_\varepsilon \to 0$ as $\varepsilon\to 0$.
Consequently, by Lebesgue's dominated convergence theorem,
the integral in (\ref{Green-Estimate-2}) converges to $0$ as
$\varepsilon\to 0$. It follows that $u(x)=0$ for any $x\in \Omega$.
This gives the uniqueness of the $L^p$ regularity problem.
\end{proof}

We have a similar result on the uniqueness of the $L^p$ Dirichlet problem.

\begin{thm}\label{Dirichlet-uniqueness}
Let $1<p<\infty$ and $\frac{1}{p}+\frac{1}{q}=1$.
Suppose that for each $x\in \Omega$, the $L^{p}$ regularity problem
for the biharmonic equation
in $\Omega$ with boundary data $D^\alpha \Gamma^x$
has a solution $W^x$. Then the solution of the $L^{q}$
Dirichlet problem for the biharmonic equation in $\Omega$
with any given boundary data in $\mbox{\emph{\it WA}}^{1,q}(\partial\Omega)$, if it exists,
is unique.
\end{thm}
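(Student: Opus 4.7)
The plan is to adapt the argument of Theorem \ref{regularity-uniqueness} by swapping the roles of the Dirichlet and regularity problems, and by adding one extra round of integration by parts. Fix $x\in\Omega$ and set $H^x := \Gamma^x - W^x$. Since $W^x$ solves $(R)_p$ with boundary data $D^\alpha\Gamma^x$ for $|\alpha|\le 1$, the function $H^x$ satisfies $H^x=|\nabla H^x|=0$ nontangentially on $\partial\Omega$, $\Delta^2 H^x=\delta_x$ in $\Omega$, and---crucially---$(\nabla^2 H^x)^*\in L^p(\partial\Omega)$. Let $u$ be any solution of $(D)_q$ with zero data, so that $u=|\nabla u|=0$ nontangentially on $\partial\Omega$ and $(\nabla u)^*\in L^q(\partial\Omega)$. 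The goal is to show $u(x)=0$.

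Using the same cutoff $\varphi=\varphi_\varepsilon$ as in Theorem \ref{regularity-uniqueness} with $\varepsilon<\tfrac14\text{dist}(x,\partial\Omega)$, $u\varphi$ extends by zero to an element of $C_0^\infty(\Omega)$. Since $\Delta^2 W^x=0$ in $\Omega$, integrating by parts with $W^x$ produces no boundary contribution, so $\int_\Omega W^x\Delta^2(u\varphi)\,dy=0$; combined with $\int_\Omega \Gamma^x\Delta^2(u\varphi)\,dy=u(x)\varphi(x)=u(x)$, this gives
$$u(x)=\int_\Omega H^x\,\Delta^2(u\varphi)\,dy.$$
Expanding $\Delta^2(u\varphi)$ concentrates the integrand in the shell $K_\varepsilon$ and yields the same five terms as in the proof of Theorem \ref{regularity-uniqueness}. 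Because only $(\nabla u)^*$---not $(\nabla^2 u)^*$---is under control here, I will integrate by parts once more (legitimately within $\Omega$, since every derivative of $\varphi$ is compactly supported in $\Omega$) to transfer derivatives off $u$ until each summand contains at most $\nabla u$. Schematically, every resulting term has the form $(D^\beta H^x)(D^\gamma\varphi)(D^\alpha u)$ with $|\alpha|\le 1$, $|\beta|\le 2$, and $|\alpha|+|\beta|+|\gamma|=4$.

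Combining $|D^\gamma\varphi|\le C\varepsilon^{-|\gamma|}$ with the Taylor bounds (valid for $y\in K_\varepsilon$ with $Q\in\partial\Omega$ a nearest boundary point, since $H^x,\nabla H^x,u,\nabla u$ all vanish nontangentially on $\partial\Omega$)
$$|H^x(y)|\le C\varepsilon^2(\nabla^2 H^x)^*_\varepsilon(Q),\qquad |\nabla H^x(y)|\le C\varepsilon(\nabla^2 H^x)^*_\varepsilon(Q),$$
$$|u(y)|\le C\varepsilon(\nabla u)^*_\varepsilon(Q),\qquad |\nabla u(y)|\le (\nabla u)^*_\varepsilon(Q),$$
a short calculation dominates every summand pointwise by $C\varepsilon^{-1}(\nabla^2 H^x)^*_\varepsilon(Q)(\nabla u)^*_\varepsilon(Q)$. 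Integrating over $K_\varepsilon$ (of thickness $\sim\varepsilon$) and applying H\"older's inequality,
$$|u(x)|\le C\int_{\partial\Omega}(\nabla^2 H^x)^*_\varepsilon(\nabla u)^*_\varepsilon\,d\sigma\le C\,\|(\nabla^2 H^x)^*\|_p\,\|(\nabla u)^*_\varepsilon\|_q.$$

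To conclude, I would let $\varepsilon\to 0$. Since $\nabla u=0$ nontangentially a.e.\ on $\partial\Omega$, $(\nabla u)^*_\varepsilon\to 0$ a.e., dominated by $(\nabla u)^*\in L^q(\partial\Omega)$; the dominated convergence theorem gives $\|(\nabla u)^*_\varepsilon\|_q\to 0$, forcing $u(x)=0$. I expect the main technical obstacle to be the bookkeeping in the second round of integrations by parts: one must verify that after redistributing derivatives so that $u$ appears with order at most one, every surviving term carries exactly the $\varepsilon$-weight needed to produce the uniform bound $C\varepsilon^{-1}(\nabla^2 H^x)^*_\varepsilon(\nabla u)^*_\varepsilon$. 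This redistribution is the essential new ingredient relative to Theorem \ref{regularity-uniqueness}, and it is what permits the weaker $L^q$-control of $\nabla u$ to be paired against the stronger $L^p$-control of $\nabla^2 H^x$.
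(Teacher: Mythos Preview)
Your argument is correct and is essentially the paper's own proof: the paper also writes $u(x)=\int_\Omega \widetilde{G}^x\,\Delta^2(u\varphi)\,dy$ with $\widetilde{G}^x=\Gamma^x-W^x$, integrates by parts until only $u$ and $\nabla u$ remain (obtaining exactly the terms $D_iD_j\widetilde{G}^x\cdot D_ju\cdot D_i\varphi$, $D_i\widetilde{G}^x\cdot D_iu\cdot\Delta\varphi$, $\widetilde{G}^x\cdot D_iu\cdot D_i\Delta\varphi$, $\widetilde{G}^x\cdot u\cdot\Delta^2\varphi$), and then uses the same Taylor bounds and dominated convergence to reach $|u(x)|\le C\int_{\partial\Omega}(\nabla^2\widetilde{G}^x)^*_\varepsilon(\nabla u)^*_\varepsilon\,d\sigma\to 0$. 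Your schematic bookkeeping $|\alpha|+|\beta|+|\gamma|=4$ with $|\alpha|\le 1$, $|\beta|\le 2$ captures precisely these four terms, and the $\varepsilon$-weights balance exactly as you anticipated.
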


\begin{proof} The proof is similar to that of Theorem \ref{regularity-uniqueness}.
Fix $x\in \Omega$ and let $\widetilde{G}^x=\Gamma^x-W^x$.
It follows from integration by parts that
if $\Delta^2 u=0$ in $\Omega$,
$$
\aligned
u(x) &=\int_\Omega \widetilde{G}^x \Delta^2(u\varphi)\, dy\\
&=4\int_\Omega D_iD_j \widetilde{G}^x\cdot D_j u \cdot D_i \varphi \, dy
+2 \int_\Omega D_i\widetilde{G}^x \cdot D_i u \cdot \Delta\varphi \, dy
\\
&\ \ \ \ \ \quad\quad
+\int_\Omega\widetilde{G}^x \big\{
2 D_i u \cdot D_i \Delta \varphi
+ u\Delta^2 \varphi \big\}\, dy.
\endaligned
$$
As in the proof of Theorem \ref{regularity-uniqueness}, this implies that
\begin{equation}\label{Green-Estimate-3}
|u(x)|
\le C\int_{\partial\Omega}
(\nabla^2 G^x)^*_\varepsilon (\nabla u)^*_\varepsilon\, d\sigma,
\end{equation}
if $u$ is a solution of the $L^{q}$ Dirichlet problem with zero boundary data.
Since
$$
(\nabla^2 G^x)^*_\varepsilon (\nabla u)^*_\varepsilon
\le (\nabla^2 G^x)^*_{\varepsilon_0} (\nabla u)^*
\in L^1(\partial\Omega)
$$
for $0<\varepsilon<\varepsilon_0=(1/4)\text{dist}(x, \partial\Omega)$
and $(\nabla u)^*_\varepsilon \to 0$ as $\varepsilon\to 0$,
we again obtain $u(x)=0$ for any $x\in \Omega$ by Lebesgue's dominated
convergence theorem.
\end{proof}

\section{A bilinear estimate}

Let $f,g$ be two $C^2$ functions in some neighborhood of $\partial\Omega$
and $u,v$ be solutions of
the $L^2$ regularity problem for the biharmonic equation in $\Omega$
with boundary data $f,g$ respectively, given in \cite{verchota:polyharmonic,pv:higherOrder};
that is,

$$
 \left\{ \begin{array}{ll} \Delta^2 u=0 & \mbox{ in } \Omega, \\
D^{\alpha}u=D^{\alpha} f & \mbox{ on } \partial\Omega, \ |\alpha|\leq 1, \\
\|(\nabla^2 u)^*\|_2<\infty, \end{array}\right.
\hspace{0.5in}
\left\{ \begin{array}{ll} \Delta^2 v=0 & \mbox{ in } \Omega, \\
D^{\alpha}v=D^{\alpha} g & \mbox{ on } \partial\Omega, \ |\alpha|\leq 1, \\
\|(\nabla^2 v)^*\|_2<\infty. \end{array}\right.
$$

\noindent
Note that $\nabla^2 u$ and $\nabla^2 v$ both have nontangential limits
a.e. on $\partial\Omega$.
We now introduce the bilinear form
\begin{equation} \label{defnBilinearForm}
\Lambda[f,g] := \lim_{m\to\infty} \int_{\partial\Omega_m}
\left\{ \pdydx{}{n}\Delta u\cdot v - \Delta u\cdot \pdydx{v}{n} \right\}\,d\sigma
=-\int_\Omega \Delta u\cdot \Delta v\, dx,
\end{equation}
where $\Omega_m\uparrow\Omega$.
Note that $\Lambda[f,g]=\Lambda[g,f]$.
Also, since $\Delta u$ is harmonic in $\Omega$, by Lemma \ref{sobolevLimit}, we have
\begin{equation}\label{bilinear-form-1}
\Lambda[f, g]
=\lim_{m\to\infty}
\int_{\partial\Omega_m}
\left\{ \pdydx{}{n}\Delta u\cdot w - \Delta u\cdot \pdydx{w}{n} \right\}\,d\sigma,
\end{equation}
where $w\in C^2(\Omega\setminus K)$ for some compact subset $K$ of $\Omega$ and
has the property
that $(\nabla w)^*\in L^2(\partial\Omega)$,
and $w=g$, $\nabla w=\nabla g$ a.e. on $\partial\Omega$
in the sense of nontangential convergence.

\begin{prop}\label{regularity-imply-bilinear-1}
Let $2\le p<\infty$ and $\frac{1}{p}+\frac{1}{q}=1$.
Suppose that the $L^{p}$ regularity problem for the biharmonic equation in $\Omega$
is uniquely solvable. Then
\begin{equation} \label{bilinearFormEstimate}
\big|\Lambda[f,g]\big| \leq C
\bigg\{ \|\nabla_t\nabla f \|_p +
\|\nabla f\|_p +\| f\|_p\bigg\}
\bigg\{ \|\nabla g\|_{q} + \|g\|_{q} \bigg\},
\end{equation}
for any functions $f,g$ which are $C^2$ in some neighborhood of $\partial\Omega$.
\end{prop}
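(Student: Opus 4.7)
The plan is to reduce (\ref{bilinearFormEstimate}) to two boundary integrals via the reformulation (\ref{bilinear-form-1}) with a convenient extension of $g$, and then exploit that $\Delta u$ is harmonic with $L^p$ boundary trace. One term becomes a distributional $W^{-1,p}$--$W^{1,q}$ pairing handled by Lemma~\ref{sobolevLimit} (whose proof uses solvability of Laplace's $L^q$ regularity problem); the other is a direct $L^p$--$L^q$ H\"older bound.

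First, since $p\geq 2$ and $|\partial\Omega|<\infty$, solvability of $(R)_p$ forces the $L^2$ regularity solution $u$ appearing in the definition of $\Lambda$ to coincide with the $L^p$ regularity solution: any $L^p$ regularity solution has $(\nabla^2 u)^*\in L^p(\partial\Omega)\subset L^2(\partial\Omega)$, and $(R)_2$ is uniquely solvable on any bounded Lipschitz domain. The $L^p$ regularity estimate therefore yields
$$\|\Delta u\|_{L^p(\partial\Omega)}\leq C\|(\nabla^2 u)^*\|_p\leq C\big\{\|\nabla_t\nabla f\|_p+\|\nabla f\|_p+\|f\|_p\big\},$$
matching the first factor in (\ref{bilinearFormEstimate}). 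Next, I would choose a cutoff $\eta\in C_0^\infty(\rn{d})$ equal to one in a neighborhood of $\partial\Omega$ where $g$ is $C^2$, and set $w=g\eta$; then $w\in C^2(\Omega)$, $w=g$ and $\nabla w=\nabla g$ on $\partial\Omega$, and $(\nabla w)^*\in L^\infty(\partial\Omega)\subset L^2(\partial\Omega)$. By (\ref{bilinear-form-1}),
$$\Lambda[f,g]=\lim_{m\to\infty}\int_{\partial\Omega_m}\bigg\{\pdydx{}{n}\Delta u\cdot g-\Delta u\cdot\pdydx{g}{n}\bigg\}\,d\sigma.$$

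For the first integral, Lemma~\ref{sobolevLimit} applied to the harmonic function $\Delta u$ (with trace in $L^p$) gives $\ell:=\partial(\Delta u)/\partial n\in W^{-1,p}(\partial\Omega)$ with $\|\ell\|_{W^{-1,p}}\leq C\|\Delta u\|_{L^p(\partial\Omega)}$. The function $g\eta$ is an admissible extension in the sense of Lemma~\ref{sobolevLimit} (its nontangential gradient is bounded, hence in $L^q$), so the limit equals $\ell(g|_{\partial\Omega})=\int_{\partial\Omega}\Delta u\cdot(\partial V_g/\partial n)\,d\sigma$, where $V_g$ solves Laplace's $L^q$ regularity problem with boundary data $g|_{\partial\Omega}$ (which is solvable since $1<q\leq 2$). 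The bound $\|(\nabla V_g)^*\|_q\leq C(\|\nabla g\|_q+\|g\|_q)$ together with H\"older then gives $|\ell(g|_{\partial\Omega})|\leq C\|\Delta u\|_p(\|\nabla g\|_q+\|g\|_q)$. For the second integral, $\Delta u$ has a.e.\ nontangential limits (from $(\nabla^2 u)^*\in L^p$) and $\partial g/\partial n$ is uniformly bounded near $\partial\Omega$; dominated convergence and H\"older yield $C\|\Delta u\|_p\|\nabla g\|_q$. Combining these produces (\ref{bilinearFormEstimate}). The main technical point is the clean identification of $\partial(\Delta u)/\partial n$ as an element of $W^{-1,p}(\partial\Omega)$ paired against the trace of $g$; once Lemma~\ref{sobolevLimit} is in hand, the rest is routine bookkeeping through the approximating sequence $\partial\Omega_m$.
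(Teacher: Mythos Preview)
Your proof is correct and follows essentially the same route as the paper's: both split $\Lambda[f,g]$ into the $W^{-1,p}$--$W^{1,q}$ pairing $\ell(g)$ coming from $\partial(\Delta u)/\partial n$ via Lemma~\ref{sobolevLimit}, and the ordinary boundary integral of $\Delta u\cdot\partial g/\partial n$ handled by H\"older, using the $L^p$ regularity estimate to control $\|\Delta u\|_p$. The paper's version is terser, but the logical skeleton is identical.
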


\begin{proof}
It follows from the solvability of the $L^p$ regularity problem
that $(\Delta u)^* \in L^p(\partial\Omega)$.
In view of Lemma \ref{sobolevLimit}, we have
$$
\Lambda[f,g]
=\ell (g)-\int_{\partial\Omega} \Delta u\cdot \frac{\partial g}{\partial n}\, d\sigma
$$
 where $\ell=\frac{\partial }{\partial n} \Delta u \in W^{-1,p}(\partial\Omega)$, and
\begin{equation}
\aligned
\big|\Lambda[f,g]\big|
&\le C
\|\Delta u\|_p
\bigg\{ \|g\|_{W^{1,q}(\partial\Omega)} +\|\frac{\partial g}{\partial n}\|_q\bigg\}\\
&\le C
\bigg\{ \|\nabla_t\nabla f\|_p +\| \nabla f\|_p +\| f\|_p\bigg\}
\bigg\{ \|\nabla g\|_q +\| g\|_q\bigg\}.
\endaligned
\end{equation}
\end{proof}

We will show in the next two sections that for any $1<p<\infty$,
the solvability of either the $L^p$ regularity problem or the $L^q$ Dirichlet problem
also implies the bilinear estimate (\ref{bilinearFormEstimate}).
The proofs, however, are much more involved.

\section{$(D)_q$ implies bilinear estimate}

\begin{thm} \label{Dirichlet-imply-bilinear}
 Let $1<p<\infty$ and $\frac{1}{p}+\frac{1}{q}=1$.
If the $L^{q}$ Dirichlet problem for the biharmonic equation in $\Omega$
is uniquely solvable, then the bilinear estimate (\ref{bilinear-estimate}) holds.
\end{thm}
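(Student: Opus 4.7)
The plan is to leverage the solvability of $(D)_q$ to obtain sharp $L^q$ boundary control on the regularity solution $v$, then to use this within the bilinear form $\Lambda[f,g]$, together with a localization/square-function argument, to produce the desired $L^p\text{--}L^q$ estimate. The proof naturally splits into three steps.

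\textbf{Step 1: Transfer $(D)_q$-solvability to the regularity solution $v$.} Let $v$ be the $L^2$-regularity solution with data $\dot g$; since $g\in C_0^\infty(\rn{d})$, the restricted data $(g,\nabla g)|_{\partial\Omega}$ lies in $\mbox{\emph{WA}}^{1,q}(\partial\Omega)$. By the assumed unique solvability of $(D)_q$, there is a unique $L^q$-Dirichlet solution with this data; by the uniqueness result of Theorem \ref{Dirichlet-uniqueness} (applied to the difference of candidates, which is a biharmonic function with zero $\mbox{\emph{WA}}^{1,q}$ data) this Dirichlet solution coincides with $v$. Consequently,
\[
\|(\nabla v)^*\|_{q} + \|(v)^*\|_{q}\le C\bigl(\|\nabla g\|_{q}+\|g\|_{q}\bigr),
\]
and in particular $v$, $\nabla v$ have nontangential traces in $L^q(\partial\Omega)$ with the expected norm.

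\textbf{Step 2: Recast $\Lambda[f,g]$ in a form amenable to the $(D)_q$ bound.} Starting from
\[
\Lambda[f,g] = \lim_{m\to\infty}\int_{\partial\Omega_m}\Bigl(\pdydx{}{n}\Delta u\cdot v - \Delta u\cdot \pdydx{v}{n}\Bigr)\,d\sigma,
\]
we use Green's identity (\ref{Green's-identity}) and $\Delta^2u=\Delta^2v=0$ to write the symmetric form
\[
\Lambda[f,g] = \lim_{m\to\infty}\int_{\partial\Omega_m}\Bigl(\pdydx{}{n}\Delta v\cdot u - \Delta v\cdot \pdydx{u}{n}\Bigr)\,d\sigma,
\]
in which $u$ and $\partial_n u$ take the prescribed boundary values $f$ and $\partial_n f$ (hence sit in $W^{1,p}(\partial\Omega)$ and $L^p(\partial\Omega)$ respectively with norms controlled by $\|\nabla f\|_p+\|f\|_p$). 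The remaining ingredients are $\Delta v$ and $\partial_n\Delta v$. Interpreting the latter via Lemma \ref{sobolevLimit} applied to the harmonic function $\Delta v$, we will produce pairings of the form $\|\Delta u\|_{L^p(\partial\Omega)}\|\partial_n v\|_{L^q(\partial\Omega)}$ and $\langle \partial_n\Delta u,v\rangle_{W^{-1,p}\times W^{1,q}}$ by returning to the original (first) form; so the key missing piece is an $L^p(\partial\Omega)$ bound on $\Delta u$ in terms of $A_p(f):=\|\nabla_t\nabla f\|_p+\|\nabla f\|_p+\|f\|_p$.

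\textbf{Step 3: Localization to obtain the $L^p$-bound on $\Delta u$.} This is the heart of the argument and the main obstacle. The function $u$ is only the $L^2$-regularity solution, so $\Delta u$ is a priori only in $L^2(\partial\Omega)$. We cover $\partial\Omega$ by finitely many balls $B(P_j,r)$ in which $\Omega\cap B(P_j,C_1r)$ is the region above a Lipschitz graph, and we use a subordinate partition of unity. In each chart, Lemma \ref{squareFunctionProp} controls $S(\nabla u)$ by $\widetilde S(D_du)$ plus a harmless compact-set term, and Lemma \ref{radialMaxLemma} controls a radial maximal function of $\nabla u$ or $\Delta u$ by the interior $L^p$ norms of $w$ and $\nabla w$. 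Combining these with the biharmonic equation and integration by parts in the normal direction against cutoffs from the partition of unity, each local contribution to $\Lambda[f,g]$ is rewritten as an interior pairing between $\Delta u$ (or $\nabla\Delta u$) and $v$ (or $\nabla v$) weighted favorably in the normal variable. The $L^q$-bound from Step~1 then absorbs the $v$-side, while the $u$-side is bounded using the inhomogeneous Whitney-array structure of $\dot f$ and the $L^p$ norms encoded in $A_p(f)$. Summing over charts and applying Hölder's inequality yields the bilinear estimate.

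The main difficulty is carrying out Step 3 rigorously: one must simultaneously track the boundary passage $\Omega_m\uparrow\Omega$, the localization errors from the partition of unity, and the fact that square-function/nontangential maximal-function control must be transferred through the biharmonic structure without assuming the target $(R)_p$-solvability. This is precisely the ``localization argument'' emphasized in the introduction, and it is what allows the duality to be established for an individual Lipschitz domain rather than merely for the class of all such domains.
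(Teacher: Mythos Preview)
Your proposal correctly identifies the main difficulty but does not resolve it. Step~3 is where the argument breaks down: you assert that the local contributions to $\Lambda[f,g]$ can be ``rewritten as an interior pairing between $\Delta u$ (or $\nabla\Delta u$) and $v$ (or $\nabla v$) weighted favorably in the normal variable,'' and that the $u$-side is then ``bounded using the inhomogeneous Whitney-array structure of $\dot f$ and the $L^p$ norms encoded in $A_p(f)$.'' But bounding $\Delta u$ (or any second-order quantity of $u$) in $L^p(\partial\Omega)$ by $A_p(f)$ is exactly the $(R)_p$ estimate, which is what the theorem is ultimately meant to yield---you cannot assume it here. Nothing in Lemmas~\ref{squareFunctionProp} or~\ref{radialMaxLemma}, nor the square-function equivalence, produces $L^p$ control of $\nabla^2 u$ from $(D)_q$-solvability alone; $(D)_q$ only gives you $L^q$ control of $\nabla u$ and $\nabla v$.

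The paper's proof supplies the missing idea, and it is quite different from what you sketch. After localizing so that $\text{supp}(g)\subset B(0,r)$ lies in a single graph chart, one introduces \emph{primitives} of $u$ in the graph-normal variable,
\[
u_{-1}(x',x_d)=-\int_{x_d}^\infty u\psi\,ds,\qquad u_{-2}(x',x_d)=\int_{x_d}^\infty\!\!\int_t^\infty u\psi\,ds\,dt,
\]
so that $D_dD_d u_{-2}=u\psi$. The boundary integrals in $\Lambda[f,g]$ are then rewritten via repeated tangential integration by parts (moving factors of $D_d$ onto $u_{-2}$ and creating tangential derivatives $(n_kD_d-n_dD_k)$ on the $v$-side) until one obtains a pairing of $\nabla_t\nabla(g\varphi)$, which sits in $L^p$, against $\nabla^3 u_{-2}$. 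The crucial point is that $\nabla^3 u_{-2}$ now needs only an $L^q$ bound, not an $L^p$ bound, and this is exactly what the technical Lemma~\ref{technicalLemma} delivers: since $D_dD_d u_{-2}=u\psi$ and $(D)_q$ is solvable, one can control $\|\mathcal{M}(\nabla^3 u_{-2})\|_q$ by $\|\nabla f\|_q+\|f\|_q$ (via square functions, Lemma~\ref{squareFunctionProp} applied twice to reduce to $D_dD_d$, and the $(D)_q$ estimate applied to the biharmonic function $D_dD_d\widetilde u_{-2}$). Finally the symmetry $\Lambda[f,g]=\Lambda[g,f]$ swaps the roles. The device of taking primitives to shift two orders of differentiability---so that $(D)_q$ rather than $(R)_p$ suffices on the $u$-side---is the key step your outline is missing.
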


\noindent The proof of Theorem \ref{Dirichlet-imply-bilinear} relies on the following technical lemma.

\begin{lemma} \label{technicalLemma}
    Let $1<q<\infty$.
  Suppose that the $L^{q}$ Dirichlet problem for the biharmonic equation
in $\Omega$ is uniquely solvable.
Also suppose that $u\in C^{\infty}(\Omega)$,
$\nabla^3 u$ exists a.e. on $\partial\Omega$,
and $u=0$ on $\Omega \backslash B(P,r)$ for some $P\in\partial\Omega$.
We further assume that for some $C_0>C_1>10$,
$\Omega \subset B(P,C_0r)$,
$$
B(P,C_1r)\cap \Omega = B(P,C_1r)\cap \big\{(x^\prime,x_d): \ x_d > \eta(x^\prime) \big\},
$$
$\Delta^2 u \in L^{q}(\Omega)$
and
$\mathcal{M}(\nabla D_dD_d u) \in L^{q}(B(P,5r)\cap\partial\Omega)$.
 Then,
 \begin{equation}
\aligned
\int_{I(2r)} |\mathcal{M}(\nabla^3 u)|^{q}\,d\sigma &
\leq
 C\int_{\partial\Omega}
\left|\nabla D_dD_du \right|^{q}\,d\sigma
+ C \int_{\partial\Omega} \left| D_dD_d u \right|^{q}\,d\sigma  \\
& \qquad\qquad+ C\int_{\Omega} |\Delta^2 u|^{q}\,dx
+ C\int_{K} |\nabla^3 u|^{q}\,dx,
\endaligned
\end{equation}
where $I(t)=B(P,t)\cap \partial\Omega$ and $K$ is a compact subset of $\Omega$.
\end{lemma}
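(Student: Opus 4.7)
The plan is to dominate $\mathcal{M}(\nabla^3 u)$ pointwise by the standard nontangential maximal function $(\nabla^3 u)^*$ and then to estimate $\|(\nabla^3 u)^*\|_{L^q(\partial\Omega)}$ via the solvability of $(D)_q$ applied to the biharmonic function $D_d^2 u$. The crucial link is an iteration of Lemma \ref{squareFunctionProp} together with (\ref{maximal-dominate}) and (\ref{square-dominate}), which converts $\|(\nabla^3 u)^*\|_q$ to $\|(\nabla D_d^2 u)^*\|_q$ modulo compact-interior errors. The domination $\mathcal{M}(w) \le (w)^*$ is valid for $Q \in I(2r)$ because the local graph hypothesis allows one to choose a regular nontangential cone family containing the vertical ray $\{Q + t e_d : 0 < t < cr\}$.

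First I reduce to the biharmonic case by subtracting a Newtonian-type potential. Let $\Gamma$ be the fundamental solution of $\Delta^2$ on $\rn{d}$, and set $w(x) = \int_{\rn{d}} \Gamma(x-y) \widetilde{F}(y)\,dy$, where $\widetilde{F}$ is $\Delta^2 u$ extended by zero outside $\Omega$. Then $\tilde u := u - w$ satisfies $\Delta^2 \tilde u = 0$ in $\Omega$. Standard Calder\'on-Zygmund estimates applied to the singular kernels $\nabla^j \Gamma$ yield $\|(\nabla^3 w)^*\|_{L^q(\partial\Omega)}$ and the boundary $L^q$ norms of $\nabla^j w$ for $j \le 3$ all bounded by $C\|\Delta^2 u\|_{L^q(\Omega)}$. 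Hence it suffices to prove the estimate for biharmonic $\tilde u$, and the $\|\Delta^2 u\|_{L^q(\Omega)}$ contribution arises from transferring back.

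Assuming now $u$ is biharmonic (so every derivative $D^\alpha u$ is biharmonic), apply the ``three-step'' chain iteratively. At the third-derivative level: (\ref{square-dominate}) applied to each biharmonic $D_kD_l u$ and summed gives $\|(\nabla^3 u)^*\|_q \le C(\|S(\nabla^3 u)\|_q + \sup_K|\nabla^3 u|)$; Lemma \ref{squareFunctionProp} applied pointwise to each $D_kD_l u$ and summed gives $S(\nabla^3 u) \le C(\widetilde{S}(\nabla^2 D_d u) + \sup_K|\nabla^3 u|)$; (\ref{maximal-dominate}) applied to each biharmonic $D_k D_d u$ and summed yields $\|\widetilde{S}(\nabla^2 D_d u)\|_q \le C\|(\nabla^2 D_d u)^*\|_q$. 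Concatenating,
\[
 \|(\nabla^3 u)^*\|_q \le C\|(\nabla^2 D_d u)^*\|_q + C\sup_K|\nabla^3 u|.
\]
Running the same three-step chain at the second-derivative level, with the biharmonic function $D_d u$ in place of $u$, produces
\[
 \|(\nabla^2 D_d u)^*\|_q \le C\|(\nabla D_d^2 u)^*\|_q + C\sup_K|\nabla^3 u|.
\]
Combining the two yields $\|(\nabla^3 u)^*\|_q \le C\|(\nabla D_d^2 u)^*\|_q + C\sup_K|\nabla^3 u|$. Since $D_d^2 u$ is biharmonic, the assumed solvability of $(D)_q$ now gives
\[
 \|(\nabla D_d^2 u)^*\|_q \le C\bigl(\|\nabla D_d^2 u\|_{L^q(\partial\Omega)} + \|D_d^2 u\|_{L^q(\partial\Omega)}\bigr).
\]
Using $\mathcal{M}(\nabla^3 u) \le (\nabla^3 u)^*$ on $I(2r)$, absorbing $\sup_K|\nabla^3 u|$ into $\|\nabla^3 u\|_{L^q(K')}$ on a slightly enlarged compact $K' \subset \Omega$ via interior elliptic regularity, and reinstating the bounds for $w$ finishes the proof.

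The main obstacle is carrying the compact-interior $\sup_K$ contributions cleanly through both levels of the three-step chain so they can all be absorbed into a single term $\int_K |\nabla^3 u|^q\,dx$; this requires some care with the interior regularity step, and with the choice of a single compact $K$ large enough to accommodate every application. A secondary technicality is verifying that the Calder\'on-Zygmund bounds for the Newtonian potential $w$ give the correct control of both $(\nabla^3 w)^*$ and the boundary $L^q$ norm of $\nabla D_d^2 w$ on a Lipschitz boundary, using the compact support of $\widetilde{F}$ in $\Omega$.
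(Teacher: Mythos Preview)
Your approach is essentially the paper's: subtract the potential $w=\Gamma*(\Delta^2 u)$ to reduce to a biharmonic $\tilde u$, then use the square/maximal equivalences (\ref{maximal-dominate})--(\ref{square-dominate}) together with two applications of Lemma~\ref{squareFunctionProp} to pass from $(\nabla^3\tilde u)^*$ to $(\nabla D_d^2\tilde u)^*$, and finally invoke $(D)_q$ on the biharmonic function $D_d^2\tilde u$. The only cosmetic difference is that you go maximal$\to$square$\to$maximal twice, whereas the paper applies Lemma~\ref{squareFunctionProp} twice at the square-function level before returning to the maximal function; both work.

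Two points you state too loosely, which the paper does handle carefully. First, Lemma~\ref{squareFunctionProp} gives the pointwise bound $S(\nabla^3\tilde u)\le C\,\widetilde S(\nabla D_d^2\tilde u)+C\sup_K|\nabla^3\tilde u|$ only on $I(5r)$, not on all of $\partial\Omega$; your sentence asserts it globally. The complementary piece $\partial\Omega\setminus I(3r)$ must be treated separately, using that $u=0$ there so $\tilde u=-\Gamma*(\Delta^2 u)$ and $S(\nabla^3\tilde u)$ is controlled by $\|\Delta^2 u\|_{L^q(\Omega)}$. Second, before you may apply the $(D)_q$ estimate to $D_d^2\tilde u$ you need the a~priori membership $(\nabla D_d^2\tilde u)^*\in L^q(\partial\Omega)$; this is exactly where the hypothesis $\mathcal{M}(\nabla D_d^2 u)\in L^q(I(5r))$ enters, via the pointwise bound $(h)^*\le C\,M_{\partial\Omega}(\mathcal{M}(h))$ on $I(3r)$ for biharmonic $h$, combined with the potential bounds for $w$ off $I(3r)$. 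Once these two localization issues are written out, your argument and the paper's coincide.
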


\begin{proof}
    We may assume that $P=0$.
Let $$
\widetilde{u}=u-\Gamma*(\Delta^2 u)
=u(x)-\int_\Omega \Gamma(x-y)\cdot \Delta^2 u(y)\, dy,
$$
where $\Gamma (x)$ is the fundamental solution of the biharmonic equation
with pole at the origin.
Then $\Delta^2 \widetilde{u}=0$ in $\Omega$
and
    \begin{equation} \label{techLemmaEqn1}\int_{I(2r)}
|\mathcal{M}(\nabla^3 u)|^{q}\,d\sigma \leq C\int_{I(2r)}
|\mathcal{M}(\nabla^3 \widetilde{u})|^{q}\,d\sigma
+ C\int_{I(2r)}
|\mathcal{M}(\nabla^3 \Gamma* (\Delta^2 u))|^{q}\,d\sigma.
\end{equation}
We now estimate both terms on the right hand side of (\ref{techLemmaEqn1}),
 beginning with the second term.
Using Lemma \ref{radialMaxLemma}
 as well as the fractional and singular integral estimates, we obtain
    \begin{equation}\aligned
        \int_{I(2r)} |\mathcal{M}(\nabla^3 \Gamma* (\Delta^2 u ))|^{q}\,d\sigma
&\leq C\int_{\Omega} |\nabla^4 \Gamma* (\Delta^2 u)|^{q}\,dx
+ C\int_{\Omega} |\nabla^3 \Gamma* (\Delta^2 u)|^{q}\,dx \\
        &\leq C \int_{\Omega} |\Delta^2 u|^{q}\,dx. \label{techLemmaEqn1b}
\endaligned
    \end{equation}

To estimate the first term in the right hand side of (\ref{techLemmaEqn1}),
we use the square function estimate (\ref{square-dominate}) to obtain
    \begin{equation}\aligned
        \int_{I(2r)}
|\mathcal{M}(\nabla^3 \widetilde{u})|^{q}\,d\sigma
&\leq C \int_{\partial\Omega} |(\nabla^3 \widetilde{u})^*|^{q}\,d\sigma \\
        &\leq C\int_{\partial\Omega} |S(\nabla^3 \widetilde{u})|^{q}\,d\sigma
+ C\sup_{K_1} |\nabla^3 \widetilde{u}|^{q} \label{techLemmaEqn2},
\endaligned
    \end{equation}
    \noindent where $K_1$ is a compact subset of $\Omega$.
  With the well known interior estimates for biharmonic functions,
we can handle the second term in (\ref{techLemmaEqn2}) in the following way:
 \begin{equation} \label{techLemmaEqn3}
 \sup_{K_1} |\nabla^3 \widetilde{u}|^{q}
\leq C\int_{K_2} |\nabla^3 \widetilde{u}|^{q}\,dx
\leq  C\int_{\Omega} |\Delta^2 u|^{q}\,dx
+C\int_{K_2} |\nabla^3 u|^{q}\,dx
\end{equation}
where $K_2\supset K_1$ is a compact subset of $\Omega$.

    Next we will estimate the term involving the square function
in (\ref{techLemmaEqn2}) with the following observations,
    \begin{eqnarray}
        S(\nabla^3 \widetilde{u})
&=& S(\nabla^3 \Gamma* (\Delta^2u)) \leq C\int_{\Omega} |\Delta^2 u|\,dx \hspace{0.25in}
\mbox{ on } \partial\Omega \backslash B(0,3r), \label{techLemmaEqn4}\\
        S(\nabla^3 \widetilde{u})
&\leq& C\bigg\{ \widetilde{S}\left(\nabla D_dD_d
\widetilde{u}\right) + \sup_{K_3} |\nabla^3 \widetilde{u}|\bigg\}
\hspace{0.25in} \mbox{ on } B(0,3r) \cap \partial\Omega, \label{techLemmaEqn5}
    \end{eqnarray}
    \noindent where $K_3\supset K_2$ is a compact subset of $\Omega$
and $\widetilde{S}(w)$ denotes a square function of $w$,
defined by using a regular family of nontangential cones
 which are slightly larger than the ones used for $S(w)$.
 Estimate (\ref{techLemmaEqn4})
 is a simple consequence of the fact that $u=0$ on $\Omega \backslash B(0,r)$, while
 estimate (\ref{techLemmaEqn5}) follows by applying Lemma \ref{squareFunctionProp}
twice.

    Now, using estimates (\ref{techLemmaEqn4}) and (\ref{techLemmaEqn5}) we obtain
    \begin{equation}\aligned
        \int_{\partial\Omega} |S(\nabla^3 \widetilde{u})|^{q}\,d\sigma
&=
\int_{I(3r)} |S(\nabla^3 \widetilde{u})|^{q}\,d\sigma
+ \int_{\partial\Omega \backslash I(3r)} |S(\nabla^3 \widetilde{u})|^{q}\,d\sigma  \\
        &\leq C \left\{ \int_{I(3r)} \left| \widetilde{S} \left(\nabla D_dD_d
 \widetilde{u}\right)\right|^{q}\,d\sigma + \int_{\Omega} |\Delta^2 u|^{q}\,dx
+ \sup_{K_3} |\nabla^3 \widetilde{u}|^{q}\right\}\\
        &\leq C\left\{ \int_{\partial\Omega} \left|
\left(\nabla D_dD_d \widetilde{u} \right)^*\right|^{q}\,d\sigma
+ \int_{\Omega} |\Delta^2 u|^{q}\,dx + \int_{K_4} |\nabla^3 \widetilde{u}|^{q}\,dx\right\}\\
&\leq C\left\{ \int_{\partial\Omega} \left|
\left(\nabla D_dD_d \widetilde{u} \right)^*\right|^{q}\,d\sigma
+ \int_{\Omega} |\Delta^2 u|^{q}\,dx + \int_{K_4} |\nabla^3 {u}|^{q}\,dx\right\}
\endaligned
\label{techLemmaEqn6}
    \end{equation}
where we have used the square function estimate (\ref{maximal-dominate}) in the
second inequality.

Note that $ D_d D_d \widetilde{u}$ is biharmonic in $\Omega$. We claim that
\begin{equation}\label{claim}
(\nabla D_d D_d \widetilde{u})^*\in L^q(\partial\Omega).
\end{equation}
Assume the claim for a moment.  We may then use the solvability
of the $L^{q}$ Dirichlet problem
 to estimate the first term in the right hand side of (\ref{techLemmaEqn6}).
This, together with Lemma \ref{radialMaxLemma}, gives

   \begin{equation}\aligned
      &   \int_{\partial\Omega} |(\nabla D_dD_d \widetilde{u})^*|^{q}\,d\sigma \\
&\leq C\int_{\partial\Omega} |\nabla D_dD_d \widetilde{u}|^{q}\,d\sigma
+ C\int_{\partial\Omega} |D_d D_d \widetilde{u}|^{q}\,d\sigma  \\
        & \leq  C\int_{\partial\Omega} |\nabla D_dD_d u|^{q}\,d\sigma
+ C\int_{\partial\Omega} |\nabla D_d D_d (\Gamma*(\Delta^2 u))|^q\,d\sigma \\
&  \qquad\qquad + C\int_{\partial\Omega} |D_d D_d u|^{q}\,d\sigma
+ C\int_{\partial\Omega} |D_d D_d (\Gamma*(\Delta^2 u))|^{q}\,d\sigma \\
                &\leq  C\int_{\partial\Omega} |\nabla D_dD_d u|^{q}\,d\sigma
+ C\int_{\partial\Omega} |D_d D_d u|^{q}
+ C\int_{\Omega} |\nabla^4 \Gamma* (\Delta^2 u)|^{q}\,dx \\
&  \qquad\qquad+C\int_{\Omega} |\nabla^3 \Gamma* (\Delta^2 u)|^{q}\,dx
+ C\int_{\Omega} |\nabla^2 \Gamma* (\Delta^2 u)|^{q}\,dx \\
        &\leq  C\left\{ \int_{\partial\Omega} |\nabla D_dD_d u|^{q}\,d\sigma
+ \int_{\partial\Omega} |D_d D_d u|^{q}\,d\sigma
+ \int_{\Omega} |\Delta^2 u|^{q}\,dx \right\}. \label{techLemmaEqn7}
\endaligned
   \end{equation}
By combining estimates (\ref{techLemmaEqn1}),
(\ref{techLemmaEqn1b}),
(\ref{techLemmaEqn2}),
(\ref{techLemmaEqn3}),
(\ref{techLemmaEqn6}) and (\ref{techLemmaEqn7}), we obtain the desired estimate.

Finally we need to prove the claim (\ref{claim}).
Since $\nabla D_d D_d \widetilde{u}$ is bounded in $\Omega\setminus B(0,2r)$, we only need to show that
$(\nabla D_dD_d \widetilde{u})^*\in L^q (I(3r))$. Note that if $\Delta^2 w=0$ in $\Omega$, we may use
interior estimates to show that
$$
(w)^*\le C M_{\partial\Omega} (\mathcal{M}(w))\quad\quad\text{ on } I(3r),
$$
where $M_{\partial\Omega}$ denotes
the Hardy-Littlewood maximal operator on $\partial\Omega$.
Hence it is enough to prove that
$\mathcal{M}(\nabla D_d D_d \widetilde{u})\in L^q(I(5r))$.
But this is easy as
$\mathcal{M}(\nabla D_dD_d u)\in L^q(I(5r))$ by the assumption and the desired estimate for
$\mathcal{M}(\nabla D_dD_d \Gamma*(\Delta^2 u))$ follows directly from
Lemma \ref{radialMaxLemma}.
The proof of Lemma \ref{technicalLemma} is now complete.
\end{proof}

We now proceed to the proof of Theorem \ref{Dirichlet-imply-bilinear}.

\begin{proof}(of Theorem \ref{Dirichlet-imply-bilinear})
    It suffices to consider the case $\mbox{supp}(g)\subset B(P_0,r)$
where $P_0\in\partial\Omega$ and $B(P_0,C_1r)\cap \partial\Omega$
is given by the graph of a Lipschitz function after a possible rotation.
  For otherwise write $g=\sum_{j=1}^{M} g\varphi_j$
where $\varphi_j \in C^{\infty}_0(\rn{d})$
and $\sum \varphi_j=1$ on $\partial\Omega$.
Then, $v=\sum v_j$ where $v_j$ is the solution of the $L^2$ regularity problem
with boundary data $D^\alpha(g\varphi_j)$
and we have
    \begin{equation}\aligned
        \big|\Lambda[f,g]\big| &\leq
\sum_j |\Lambda[f,\varphi_j g]| \\
&\leq
C\sum_j \bigg\{ \|\nabla_t\nabla (\varphi_j g) \|_p
+\|\nabla (\varphi_j g)\|_p +\| \varphi_j g\|_p\bigg\}
 \bigg\{ \|\nabla f \|_q + \|f\|_q\bigg\}
\\ &\leq C \bigg\{ \|\nabla_t\nabla g\|_p +\|\nabla g\|_p +\| g\|_p\bigg\}
 \bigg\{\|\nabla f\|_q + \|f\|_q \bigg\}.
    \endaligned
\end{equation}

    Assume $P_0=0$ and choose $\psi \in C_0^{\infty}(B(0,4r))$
such that $\psi=1$ on $B(0,3r)$ and
$|D^\alpha \psi| \leq Cr^{-|\alpha|}$ for $|\alpha|\le 4$.  Now define
    \begin{eqnarray*}
        u_{-1}(x^\prime,x_d) &=& - \int_{x_d}^{\infty} u(x',s)\psi(x',s)\,ds, \\
        u_{-2}(x^\prime,x_d) &=&
\int_{x_d}^{\infty} \int_{t}^{\infty} u(x^\prime,s)\psi(x^\prime,s)\,dsdt.
    \end{eqnarray*}
    Note that $D_du_{-2}=u_{-1}$ and $D_dD_du_{-2}=u\psi$ in $\Omega$
and in particular $D_dD_d u_{-2}=u$ on $B(0,3r)\cap \Omega$.  Also, note that
\begin{eqnarray*}
	\Delta^2 u_{-1} &=& - \int_{x_d}^{\infty} \Delta^2 \{ u(x',s)\psi(x',s)\}\,ds   \\
	&=& -\int_{x_d}^{\infty} \big\{
 4D_i\psi\cdot D_i\Delta u 
+ 2\Delta\psi \cdot \Delta u 
+ 4D_iD_ku\cdot D_iD_k \psi  \\ 
&& \qquad\qquad
\qquad\qquad  + 4D_iu\cdot D_i \Delta\psi + u\cdot \Delta^2 \psi\big\}\,ds
\end{eqnarray*}
\noindent where we've used the fact that $\Delta^2 u=0$ in $\Omega$.  
Since supp$(\nabla\psi)\subset B(0,4r)\setminus B(0,3r)$,
 it follows that on $B(0,2r)\cap \Omega$, 
\begin{equation} \label{firstPrimitaveOperatorEst} 
|\Delta^2 u_{-1}|\leq C\sup_{K} |u|,\end{equation} where $K\subset\subset \Omega$ is compact.

Let $w=v\varphi$ where $\varphi \in C_0^\infty (B(0,2r)$ and $\varphi=1$ in $B(0,r)$.
Then,
    \begin{equation}\label{primitivesAdded}\aligned
        &\int_{\partial\Omega_m} w \cdot \pdydx{}{n} \Delta u\,d\sigma
- \int_{\partial\Omega_m} \pdydx{w}{n}\cdot \Delta u\,d\sigma  \\
        &= \int_{\partial\Omega_m} w\cdot n_kD_kD_iD_iD_dD_d u_{-2}\,d\sigma
- \int_{\partial\Omega_m} n_kD_k w
\cdot D_iD_iD_dD_d u_{-2}\,d\sigma
\endaligned
    \end{equation}
where $\Omega_m\uparrow \Omega$.

   We begin by working with the second term in the right hand side of
 (\ref{primitivesAdded}).  Note that
    \begin{eqnarray}
        \lefteqn{\int_{\partial\Omega_m} n_kD_k w\cdot
D_iD_iD_dD_d u_{-2}\,d\sigma} \nonumber\\
&=& \int_{\partial\Omega_m} D_k w\cdot
(n_kD_d-n_dD_k) D_iD_iD_du_{-2}\,d\sigma +
\int_{\partial\Omega_m} D_k w\cdot n_dD_kD_iD_i D_d u_{-2}\,d\sigma \nonumber\\
\hspace{-0.1in}&=& -\int_{\partial\Omega_m} (n_kD_d-n_dD_k)D_k w
\cdot D_iD_iD_du_{-2}\,d\sigma \nonumber \\
&& \qquad + \int_{\partial\Omega_m} D_k w\cdot n_dD_kD_iD_iD_du_{-2}\,d\sigma.
 \label{tangentialCreated1}
    \end{eqnarray}

 We now consider the first term in the right hand side of
 (\ref{primitivesAdded}). Using integration by parts we obtain
    \begin{equation} \label{tangentialCreated2}
\aligned
        &\int_{\partial\Omega_m} w\cdot
n_kD_kD_iD_iD_dD_d u_{-2}\,d\sigma  \\
&= \int_{\partial\Omega_m} w\cdot
(n_kD_d-n_dD_k)D_kD_iD_iD_du_{-2}\,d\sigma+\int_{\partial\Omega_m} w\cdot n_dD_kD_kD_iD_iD_du_{-2}\,d\sigma\\
        &= -\int_{\partial\Omega_m} (n_kD_d-n_dD_k) w
\cdot D_kD_iD_iD_du_{-2}\,d\sigma+\int_{\partial\Omega_m} w\cdot n_dD_kD_kD_iD_iD_du_{-2}\,d\sigma.
\endaligned
    \end{equation}
 By combining equations (\ref{primitivesAdded}), (\ref{tangentialCreated1}),
and (\ref{tangentialCreated2}) we obtain
$$\aligned
       & \int_{\partial\Omega_m} w\cdot
n_kD_k D_iD_iD_dD_d u_{-2}\,d\sigma -
\int_{\partial\Omega_j} n_kD_k w\cdot D_iD_iD_dD_d u_{-2}\,d\sigma \\
&=\int_{\partial\Omega_m} (n_dD_k-n_kD_d)w\cdot D_kD_iD_iD_du_{-2}\,d\sigma
 + \int_{\partial\Omega_m} (n_kD_d-n_dD_k)D_k w\cdot D_iD_iD_du_{-2}\,d\sigma \\
 & \qquad \qquad- \int_{\partial\Omega_m} D_k w\cdot n_dD_kD_iD_iD_du_{-2}\,d\sigma +\int_{\partial\Omega_m} w\cdot n_dD_kD_kD_iD_iD_du_{-2}\,d\sigma \\
        &= - \int_{\partial\Omega_m} n_k D_d w\cdot D_kD_iD_iD_du_{-2}\,d\sigma
+ \int_{\partial\Omega_m} (n_kD_d-n_dD_k)D_k w\cdot D_iD_iD_d u_{-2}\,d\sigma \\
& \qquad\qquad +\int_{\partial\Omega_m} w\cdot n_dD_kD_kD_iD_iD_du_{-2}\,d\sigma \\
        &=\int_{\partial\Omega_j} D_d w\cdot (n_dD_k-n_kD_d)D_kD_iD_iu_{-2}\,d\sigma
+ \int_{\partial\Omega_m} (n_kD_d-n_dD_k)D_k w\cdot D_iD_iD_du_{-2}\,d\sigma \\
& \qquad\qquad +\int_{\partial\Omega_m} w\cdot n_dD_kD_kD_iD_iD_du_{-2}\,d\sigma \\
        &= \int_{\partial\Omega_m} (n_kD_d-n_dD_k)D_d w\cdot D_kD_iD_iu_{-2}\,d\sigma
 + \int_{\partial\Omega_m} (n_kD_d-n_dD_k)D_k w\cdot D_iD_i u_{-1}\,d\sigma. \\
 &\qquad\qquad +\int_{\partial\Omega_m} w\cdot n_dD_kD_kD_iD_iD_du_{-2}\,d\sigma
\endaligned
$$
    Therefore we have proved that
    \begin{eqnarray}
        \lefteqn{\int_{\partial\Omega_m} w\cdot
\pdydx{}{n}\Delta u\,d\sigma
- \int_{\partial\Omega_m} \pdydx{w}{n}\cdot
\Delta u\,d\sigma} \nonumber \\
&& = \int_{\partial\Omega_m} (n_kD_d-n_dD_k)D_d w
\cdot D_kD_iD_iu_{-2}\,d\sigma
\label{takeLimits} \\
&& \qquad\qquad + \int_{\partial\Omega_m} (n_kD_d-n_dD_k)D_k w
\cdot D_iD_iu_{-1}\,d\sigma +\int_{\partial\Omega_m} w\cdot n_dD_kD_kD_iD_iD_du_{-2}\,d\sigma. \nonumber
    \end{eqnarray}

    We now let $m\to \infty$.
Note that the left hand side of (\ref{takeLimits}) becomes $\Lambda[f,g]$
in view of Lemma \ref{sobolevLimit}.
The second term on the right hand side has a limit
because both $\nabla^2 u$ and $\nabla^2 v$
have nontangential limits a.e. on $\partial\Omega$ and their nontangential maximal functions are in $L^2$ since $u$ and $v$ are $L^2$ solutions of the
regularity problem.
The first term on the right hand side of (\ref{takeLimits}) can be handled by Lemmas \ref{nonTangLimitLemma}
and \ref{technicalLemma}.
Recall that $w$ is supported in the ball $B(0,2r)$.  Thus,
Lemma \ref{technicalLemma} implies that $\mathcal{M}(\nabla^3 u_{-2})\in L^q(I(8r))$.
It then follows from Lemma \ref{nonTangLimitLemma} that $\nabla^3 u_{-2}$ has nontangential limits
a.e. on $I(2r)$ and $(\nabla^3 u_{-2})^*\in L^q (I(2r))$.
  The final term on the right hand side of (\ref{takeLimits}) has a limit because $v$ has nontangential limits a.e. on $\partial\Omega$ and because of estimate (\ref{firstPrimitaveOperatorEst}).
Thus, letting $m\to \infty$ in the equation (\ref{takeLimits}) leads to
    \begin{equation}\label{almost-there}
        \big| \Lambda[f,g]\big| \leq C \left\{  \|\nabla_t \nabla (g\varphi) \|_p \|\nabla^3 u_{-2} \|_q + \|g\varphi\|_p\sup_{K} |u| \right\}.
    \end{equation}

Note that since the $L^q$ Dirichlet problem in $\Omega$ is solvable, 
we have that 
\begin{eqnarray} \sup_{K} |u| \leq C\|(u)^*\|_q \leq C\left\{ \|\nabla f\|_q + \|f\|_q\right\}. \label{supEstSolvability} \end{eqnarray}  Thus, we only need to estimate $\|\nabla^3 u_{-2}\|_q$.  To do this we apply Lemma \ref{technicalLemma} to obtain
    \begin{equation}\aligned
        \int_{\partial\Omega} |\nabla^3 u_{-2}|^q\,d\sigma
&\leq C\int_{\partial\Omega} |\nabla (u\psi)|^q\,d\sigma
+ C\int_{\partial\Omega} |u\psi|^q\,d\sigma  \\
& \qquad\qquad +C\int_K |\nabla^3 u_{-2}|^q\,dx
+C\int_{\Omega} |\Delta^2 u_{-2}|^q\,dx.
\endaligned \label{lowerOrderTerms}
    \end{equation}
Note that by the interior estimates and the solvability of the $L^q$ Dirichlet problem we have
    \begin{equation} \label{lowerOrderEst1}
      \int_{K} |\nabla^3 u_{-2}|^q\,dx \leq C\sup_{\widetilde{K}} |u|^q
\leq C\left\{ \int_{\partial\Omega} |f|^q\,d\sigma + \int_{\partial\Omega} |\nabla f|^q\,d\sigma\right\}.
    \end{equation}

 To estimate the last term in (\ref{lowerOrderTerms}) we use Hardy's inequality twice to obtain
    \begin{equation}\aligned
        \int_{\Omega} |\Delta^2 u_{-2}|^q \, dx
&\leq C\int_{ \Omega} |\Delta^2 u_{-1}|^q
\left\{\mbox{dist}(x,\partial\Omega)\right\}^q\,dx  \\
        &\leq C
\int_{\Omega}
|\Delta^2 (u\psi)|^q\left\{ \mbox{dist}(x,\partial\Omega)\right\}^{2q}\,dx  \\
        &\leq C\int_\Omega
 \left\{ |\nabla^3 u|^q + |\nabla^2 u|^q + |\nabla u|^q + |u|^q\right\}
\left\{ \mbox{dist}(x,\partial\Omega)\right\}^{2q}\,dx  \\
        &\leq C\int_{\partial\Omega} \big\{ |(\nabla u)^*|^{q} +|(u)^*|^q\big\} \,d\sigma \\
        &\leq C\int_{\partial\Omega} \big\{ |\nabla f|^{q}
+  |f|^q \big\} \,d\sigma.
\endaligned
\label{lowerOrderEst3}
    \end{equation}
    \noindent Thus, combining estimates (\ref{lowerOrderTerms}),
(\ref{lowerOrderEst1}) and (\ref{lowerOrderEst3}) we obtain
    \begin{equation} \label{lowerOrderEstFinal}
        \|\nabla^3 u_{-2}\|_q \leq C \bigg\{ \|\nabla f\|_q +\|f\|_q \bigg\}.
    \end{equation}

  In view of (\ref{almost-there}), (\ref{supEstSolvability}) and (\ref{lowerOrderEstFinal}), we may conclude that
 $$
\big|\Lambda[f,g]\big|
\leq C \bigg\{ \|\nabla_t\nabla g \|_p +\| \nabla g\|_p +\| g\|_p\bigg\}
 \bigg\{\|\nabla f\|_q + \|f\|_q \bigg\}.
$$
Since $\Lambda[f,g]=\Lambda[g,f]$, this completes the proof.
\end{proof}

\section{$(R)_p$ implies bilinear estimate}

\begin{thm} \label{regularity-bilinear}
 Let $1<p<\infty$ and $\frac{1}{p}+\frac{1}{q}=1$.
If the $L^{p}$ regularity problem for the biharmonic equation in $\Omega$
is uniquely solvable, then the bilinear
estimate (\ref{bilinear-estimate}) holds.
\end{thm}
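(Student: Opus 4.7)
The plan is to split the argument at $p=2$. When $p\ge 2$, Proposition \ref{regularity-imply-bilinear-1} already delivers the bilinear estimate \eqref{bilinear-estimate}, since under the standing normalization $|\partial\Omega|=1$ its right-hand side coincides with that of \eqref{bilinearFormEstimate}. The substantive case is $1<p<2$, which I would handle by adapting the proof of Theorem \ref{Dirichlet-imply-bilinear} with the hypothesis $(R)_p$ replacing $(D)_q$ at the one crucial place.

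The first step is to establish a variant of Lemma \ref{technicalLemma}. Under the assumption that $(R)_p$ is uniquely solvable and with the geometric setup of Lemma \ref{technicalLemma}, I would target an estimate of the shape
\begin{equation*}
\int_{I(2r)} |\mathcal{M}(\nabla^3 u)|^p\, d\sigma \;\le\; C\sum_{|\alpha|\le 2}\int_{\partial\Omega} |D^\alpha D_d u|^p\, d\sigma + C\int_\Omega |\Delta^2 u|^p\, dx + C\int_K |\nabla^3 u|^p\, dx.
\end{equation*}
The derivation would follow the template of Lemma \ref{technicalLemma}: subtract $\Gamma\ast(\Delta^2 u)$ to reduce to a biharmonic $\widetilde u$, use the square-function estimates of Lemmas \ref{squareFunctionProp} and \ref{radialMaxLemma} to bring matters to controlling $(\nabla^2 D_d\widetilde u)^*$ on a boundary patch, and then invoke $(R)_p$ on the biharmonic function $D_d\widetilde u$ in place of $(D)_q$ on $D_dD_d\widetilde u$. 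A local claim parallel to \eqref{claim} would be needed to verify admissibility of the WA$^{2,p}$-data of $D_d\widetilde u$.

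With this variant in hand, the main argument should mirror Theorem \ref{Dirichlet-imply-bilinear}. Using the symmetry $\Lambda[f,g]=\Lambda[g,f]$, I would interchange the roles of $u$ and $v$ so that the antiderivatives $v_{-1}, v_{-2}$ of a localized $v\psi$ play the role of $u_{-1}, u_{-2}$ in Theorem \ref{Dirichlet-imply-bilinear}. After a partition of unity reducing $\supp{f}$ to a Lipschitz-graph patch, the same integration-by-parts identities culminating in \eqref{takeLimits}, together with the limit $m\to\infty$ via Lemmas \ref{sobolevLimit} and \ref{nonTangLimitLemma}, should yield an analog of \eqref{almost-there}. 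The variant lemma applied to $v_{-2}$, combined with Hardy's inequality on $\Delta^2 v_{-2}$ and $\Delta^2 v_{-1}$ as in \eqref{lowerOrderEst3}, should bound $\|\nabla^3 v_{-2}\|_p$ by the regularity-norm of $g$, while the solvability of $(R)_p$ controls the auxiliary $\sup_K|v|$ by $\|(v)^*\|_p$ and hence by the same norm. Collecting terms then yields \eqref{bilinear-estimate}.

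The main obstacle will be verifying the hypotheses of the variant lemma for $v_{-2}$ when $p<2$, in particular the analog of the claim \eqref{claim} ensuring $\mathcal{M}(\nabla^2 D_d\widetilde{v}_{-2})\in L^p(I(5r))$. The exponent $p<2$ makes the combination of the square-function estimate \eqref{square-dominate} with Lemma \ref{nonTangLimitLemma} more delicate than in Section 5, and will require careful use of the interior estimates and $L^p$ bounds for the Newtonian potential.
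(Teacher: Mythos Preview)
Your variant lemma is correct and matches the paper's Lemma \ref{technicalLemma-1}. The gap is in the main argument for $p<2$: swapping the roles of $u$ and $v$ and carrying over the \emph{two} antiderivatives $v_{-1},v_{-2}$ from the proof of Theorem \ref{Dirichlet-imply-bilinear} does not yield \eqref{bilinear-estimate}. Following \eqref{takeLimits} with $\tilde w=u\varphi$ in place of $w$ and $v_{-2}$ in place of $u_{-2}$, H\"older gives terms of the form $\|\nabla_t\nabla(f\varphi)\|_{q}\,\|\nabla^3 v_{-2}\|_{p}$. Your variant lemma applied to $v_{-2}$ then bounds $\|\nabla^3 v_{-2}\|_p$ by $\|\nabla^2 D_d v_{-2}\|_p=\|\nabla^2 v_{-1}\|_p$, which ultimately requires $(\nabla^2 v)^*\in L^p$ and hence the full regularity norm of $g$ in $L^p$. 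The output is an estimate of the shape
\[
|\Lambda[f,g]|\le C\,\big\{\|\nabla_t\nabla f\|_q+\cdots\big\}\,\big\{\|\nabla_t\nabla g\|_p+\cdots\big\},
\]
with second-order data on \emph{both} sides and the exponents reversed from \eqref{bilinear-estimate}. The symmetry $\Lambda[f,g]=\Lambda[g,f]$ does not repair this: swapping still leaves second-order information on the $g$-side. The structural point is that two antiderivatives force two tangential integrations by parts onto the $w$-side, while an $(R)_p$-type lemma on the second antiderivative still demands second-order control on the other side.

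The paper's remedy is not to swap at all and to take only \emph{one} antiderivative $u_{-1}$ of $u\psi$. Because $(R)_p$ already gives $\Delta u\in L^p(\partial\Omega)$, the term $\int_{\partial\Omega}\Delta u\cdot\partial v/\partial n$ is bounded immediately by $\|\Delta u\|_p\|\nabla g\|_q$. For the remaining term one writes $w\cdot\partial_n\Delta u=w\cdot(n_jD_d-n_dD_j)D_j\Delta u_{-1}+w\cdot n_d\Delta^2 u_{-1}$ and integrates by parts \emph{once}; this moves a single tangential derivative to $w$, producing $\|\nabla_t(g\varphi)\|_q\,\|\mathcal M(\nabla^3 u_{-1})\|_{L^p(I(2r))}$. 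Lemma \ref{technicalLemma-1} applied to $u_{-1}$ then controls $\|\nabla^3 u_{-1}\|_p$ by $\|\nabla^2(u\psi)\|_p$ and lower order terms, which is exactly the $L^p$ regularity norm of $f$. This yields \eqref{bilinear-estimate} directly, with no appeal to symmetry.
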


The proof of Theorem \ref{regularity-bilinear} is similar to (and slightly simpler than)
that of Theorem \ref{Dirichlet-imply-bilinear}.

\begin{lemma} \label{technicalLemma-1}
    Let $1<p<\infty$.
  Suppose that the $L^{p}$ regularity problem for the biharmonic equation
in $\Omega$ is uniquely solvable.
Also suppose that $u\in C^{\infty}(\Omega)$,
$\nabla^3 u$ exists a.e. on $\partial\Omega$,
and $u=0$ on $\Omega \backslash B(P,r)$ for some $P\in\partial\Omega$.
We further assume that for some $C_0>C_1>10$,
$\Omega \subset B(P,C_0r)$,
$$
B(P,C_1r)\cap \Omega = B(P,C_1r)\cap \big\{(x^\prime,x_d): \ x_d > \eta(x^\prime) \big\},
$$
$\Delta^2 u \in L^p(\Omega)$
and
$\mathcal{M}(\nabla^2 D_d u) \in L^p(B(P,5r)\cap\partial\Omega)$.
 Then,
 \begin{equation}\label{estimate-6-1}
\aligned
\int_{I(2r)} |\mathcal{M}(\nabla^3 u)|^{p}\,d\sigma &
\leq
 C\int_{\partial\Omega}
\left|\nabla^2 D_d u \right|^p\,d\sigma
+ C \int_{\partial\Omega} \big\{ |\nabla D_d u|^p +|D_d u|^p \big\}\,d\sigma
  \\
& \qquad\qquad+ C\int_{\Omega} |\Delta^2 u|^p\,dx
+ C\int_{K} |\nabla^3 u|^p\,dx,
\endaligned
\end{equation}
where $I(t)=B(P,t)\cap \partial\Omega$ and $K$ is a compact subset of $\Omega$.
\end{lemma}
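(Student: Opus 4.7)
The plan is to follow the template of Lemma \ref{technicalLemma} closely, with the $L^p$ regularity problem replacing the $L^q$ Dirichlet problem and $D_d\widetilde u$ replacing $D_dD_d\widetilde u$ as the auxiliary biharmonic function to which the solvability hypothesis is applied. This drop of one order is exactly the mismatch between the Dirichlet and regularity problems: the Dirichlet solvability controls one derivative of a biharmonic function, while the regularity solvability controls two.

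First I normalize $P=0$ and decompose $u=\widetilde u+\Gamma*(\Delta^2 u)$, so that $\Delta^2\widetilde u=0$ in $\Omega$. This splits $\int_{I(2r)}|\mathcal M(\nabla^3 u)|^p\,d\sigma$ into a Newtonian-potential piece and a biharmonic piece. The Newtonian piece is bounded by $\int_\Omega|\Delta^2 u|^p\,dx$ via Lemma \ref{radialMaxLemma} combined with the standard singular and fractional integral bounds on $\nabla^3\Gamma,\nabla^4\Gamma$, exactly as in (5.3). For the biharmonic piece I bound $\mathcal M(\nabla^3\widetilde u)\le(\nabla^3\widetilde u)^*$ and then use the square function estimate (\ref{square-dominate}) to replace the nontangential maximal function by $S(\nabla^3\widetilde u)$ modulo an interior supremum, which is absorbed into $\int_\Omega|\Delta^2 u|^p\,dx+\int_K|\nabla^3 u|^p\,dx$ by interior estimates, as in (5.4)--(5.5).

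Next I control $\int_{\partial\Omega}|S(\nabla^3\widetilde u)|^p\,d\sigma$. Outside $B(0,3r)$, the compact support of $u$ and the representation $\widetilde u=-\Gamma*(\Delta^2 u)$ give $S(\nabla^3\widetilde u)\le C\int_\Omega|\Delta^2 u|\,dx$. Inside $B(0,3r)\cap\partial\Omega$ I invoke Lemma \ref{squareFunctionProp} to exchange $S(\nabla^3\widetilde u)$ for $\widetilde S(\nabla^2 D_d\widetilde u)$ plus an interior term, and then the square function estimate (\ref{maximal-dominate}) to replace $\widetilde S(\nabla^2 D_d\widetilde u)$ by $(\nabla^2 D_d\widetilde u)^*$ in $L^p$. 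The result is
\[
\int_{\partial\Omega}|S(\nabla^3\widetilde u)|^p\,d\sigma\le C\,\|(\nabla^2 D_d\widetilde u)^*\|_p^p+C\int_\Omega|\Delta^2 u|^p\,dx+C\int_{K}|\nabla^3 u|^p\,dx.
\]
Now the key step: since $D_d\widetilde u$ is biharmonic in $\Omega$ and (by the claim below) has $(\nabla^2 D_d\widetilde u)^*\in L^p(\partial\Omega)$, uniqueness of the $L^p$ regularity problem identifies $D_d\widetilde u$ with the regularity solution taking its own trace data, and the a priori estimate (\ref{regularity-estimate}) yields
\[
\|(\nabla^2 D_d\widetilde u)^*\|_p\le C\bigl\{\|\nabla_t\nabla D_d\widetilde u\|_p+\|\nabla D_d\widetilde u\|_p+\|D_d\widetilde u\|_p\bigr\}.
\]
Substituting $\widetilde u=u-\Gamma*(\Delta^2 u)$ expresses the right side in terms of boundary norms of $\nabla^2 D_du,\nabla D_du,D_du$ plus boundary norms of $\nabla^2 D_d(\Gamma*\Delta^2 u)$ and lower-order analogues, and the latter are absorbed into $\int_\Omega|\Delta^2 u|^p\,dx$ by Lemma \ref{radialMaxLemma} together with the standard singular and fractional integral estimates for $\Gamma$.

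The remaining task, which I expect to be the main obstacle, is to verify the claim $(\nabla^2 D_d\widetilde u)^*\in L^p(\partial\Omega)$ needed to invoke the $L^p$ regularity solvability. Outside a neighborhood of the support of $u$ this is immediate from interior boundedness. On $I(3r)$, since $\nabla^2 D_d\widetilde u$ is (componentwise) biharmonic, interior estimates yield the pointwise domination $(\nabla^2 D_d\widetilde u)^*\le C\,M_{\partial\Omega}\bigl(\mathcal M(\nabla^2 D_d\widetilde u)\bigr)$, where $M_{\partial\Omega}$ is the Hardy--Littlewood maximal operator on $\partial\Omega$. Its $L^p$-boundedness then reduces matters to showing $\mathcal M(\nabla^2 D_d\widetilde u)\in L^p(I(5r))$; the hypothesis supplies this for $\mathcal M(\nabla^2 D_du)$, while the corresponding bound for $\mathcal M(\nabla^2 D_d(\Gamma*\Delta^2 u))$ comes directly from Lemma \ref{radialMaxLemma} applied to third derivatives of the Newtonian potential. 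Once this claim is in hand, assembling the displayed estimates produces the desired inequality (\ref{estimate-6-1}).
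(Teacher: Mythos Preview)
Your proposal is correct and follows essentially the same route as the paper's own proof: decompose $u=\widetilde u+\Gamma*(\Delta^2 u)$, reduce via the square function estimates and Lemma~\ref{squareFunctionProp} (applied once rather than twice) to bounding $\|(\nabla^2 D_d\widetilde u)^*\|_p$, verify the claim $(\nabla^2 D_d\widetilde u)^*\in L^p(\partial\Omega)$ exactly as you describe, and then invoke the solvability of the $L^p$ regularity problem for the biharmonic function $D_d\widetilde u$. The paper only sketches this, pointing back to Lemma~\ref{technicalLemma}; you have filled in the details correctly.
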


\begin{proof}
We will only give a sketch of the proof, as it is similar to that of Lemma \ref{technicalLemma}.
Let $\widetilde{u}=u-\Gamma*(\Delta^2 u)$. An inspection of
the proof of Lemma \ref{technicalLemma} shows that it suffices to
bound $\| (\nabla^2 D_d \widetilde{u})^*\|_p^p$ by the right hand side
of (\ref{estimate-6-1}).
To this end we first note that $D_d \widetilde{u}$ is biharmonic in $\Omega$.
Also, since $\Delta^2 u\in L^p(\Omega)$ and $\mathcal{M}(\nabla^2 D_d u)\in L^p(I(5r))$
by assumption,
it follows from the same argument as in the proof of Lemma \ref{technicalLemma} that
\begin{equation}\label{claim-1}
(\nabla^2 D_d \widetilde{u})^*\in L^p(\partial\Omega).
\end{equation}
As a result, we may use the solvability of the $L^p$
regularity problem in $\Omega$  as well as Lemma \ref{radialMaxLemma}
to obtain
\begin{equation}
\aligned
& \int_{\partial\Omega} |(\nabla^2 D_d \widetilde{u})^*|^p\, d\sigma\\
&\le C \int_{\partial\Omega} \big\{
|\nabla^2 D_d \widetilde{u}|^p+ |\nabla D_d \widetilde{u}|^p
+|D_d \widetilde{u}|^p\big\} \, d\sigma\\
&\le C \int_{\partial\Omega} |\nabla^2 D_d {u}|^p\, d\sigma
+C \int_{\partial\Omega} \big\{ |\nabla D_d {u}|^p
+|D_d {u}|^p\big\} \, d\sigma
+C \int_\Omega |\Delta^2 u|^p \, dx.
\endaligned
\end{equation}
This finishes the proof.
\end{proof}

We now are in a position to give the proof of Theorem \ref{regularity-bilinear}.

\begin{proof} (of Theorem \ref{regularity-bilinear})
By a partition of unity as well as rotation and translation
we may assume that supp$(g)\subset B(0,r)$, $0\in  \partial\Omega$,
and $B(0, C_1 r)\cap\partial\Omega$
is given by the graph of a Lipschitz function. Since the integral of
$\Delta u\cdot \frac{\partial v}{\partial n}$ on $\partial\Omega$
 is clearly bounded by the right hand side
of (\ref{bilinear-estimate}), it suffices
to bound
\begin{equation}
L=\bigg| \lim_{m\to \infty}
\int_{\partial\Omega_m} w\cdot \frac{\partial}{\partial n} \Delta u\, d\sigma\bigg|,
\end{equation}
where $w=v\varphi$ with $\varphi\in C_0^\infty(B(0,2r))$ and $\varphi=1$ in $B(0,r)$.

To this end we define $u_{-1}$ as in the proof of Theorem \ref{Dirichlet-imply-bilinear} and
write
\begin{equation}
w\cdot \frac{\partial}{\partial n}\Delta u =
w\cdot (n_j D_d -n_d D_j)D_j \Delta u_{-1} + w\cdot n_d \Delta^2 u_{-1},
\end{equation}
It follows that
\begin{equation}\label{L-1}
\aligned
L & =\bigg| \lim_{m\to \infty}
\left[\int_{\partial\Omega_m}
(n_j D_d -n_d D_j)w \cdot D_j \Delta u_{-1}\, d\sigma + \int_{\partial\Omega_m} w\cdot n_d \Delta^2 u_{-1} \right] \bigg|\\
&
\le C \, \| \nabla_t (g\varphi)\|_{L^q(\partial\Omega)}
\| \mathcal{M} (\nabla^3 u_{-1})\|_{L^p(I(2r))} + C\|g\varphi\|_{L^q(\partial\Omega)}\cdot \sup_K |u|,
\endaligned
\end{equation}
where $K\subset\subset \Omega$ is a compact set.  First, note that since the $L^p$ regularity problem is uniquely solvable we have \begin{equation} \label{L-3} \sup_K |u| \leq C\|(u)^*\|_p \leq C\left\{ \|\nabla_t\nabla f\|_p + \|\nabla f\|_p + \|f\|_p\right\}. \end{equation}  Also, note that since $(R)_p$ in $\Omega$ is uniquely solvable,
we have $(\nabla^2 u)^*\in L^p(\partial\Omega)$.
By Hardy inequality, this implies that $\Delta^2 u_{-1}\in L^p(\Omega)$ and
$\mathcal{M}(\nabla^2 D_d u_{-1})\in L^p(I(5r))$.
Hence we may apply Lemma \ref{technicalLemma-1} to the function $u_{-1}$.
This leads to
\begin{equation}\label{L-2}
\aligned
\int_{I(2r)} |\mathcal{M}(\nabla^3 u_{-1})|^p\, d\sigma
&\le C\int_{\partial\Omega}
\bigg\{ |\nabla^2(u\psi)|^p +|\nabla(u\psi)|^p +|u\psi|^p\bigg\}\, d\sigma\\
&\quad\quad\quad\quad
+C\int_\Omega |\Delta^2 u_{-1}|^p \, dx
+C \int_K |\nabla^3 u_{-1}|^p\, dx\\
&
\le C \int_{\partial\Omega}
\bigg\{ |\nabla^2 u|^p +|\nabla u|^p +|u|^p\bigg\} d\sigma\\
& \quad\quad\quad
+C \int_\Omega |\Delta^2 (u\psi)|^p  \big\{ \text{dist}(x, \partial\Omega)\big\}^p\, dx
+C \sup_{K_1} |u|^p\\
&\le C
\int_{\partial\Omega}
\bigg\{ |(\nabla^2 u)^*|^p
+|(\nabla u)^*|^p
+|(u)^*|^p\bigg\} \, d\sigma\\
&\le C
\int_{\partial\Omega}
\bigg\{ |\nabla_t \nabla f|^p +|\nabla f|^p +|f|^p\bigg\}\, d\sigma,
\endaligned
\end{equation}
where we have used Hardy's inequality in the second inequality
and the $L^p$ regularity estimate in the last inequality.
The desired estimate for $L$ now follows from (\ref{L-1}), (\ref{L-3}), and (\ref{L-2}).
\end{proof}

\section{Bilinear estimate implies $(R)_p$ and $(D)_q$}

In the previous two sections
 we proved that both the solvability of $(D)_q$ and the solvability of $(R)_p$
imply the bilinear estimate (\ref{bilinear-estimate}).
We will see in this section that the converse is also true.

\begin{thm}\label{bilinear-imply-regularity}
Let $1<p<\infty$ and $\frac{1}{p}+\frac{1}{q}=1$.
 Suppose the bilinear estimate (\ref{bilinear-estimate})
holds for any $f, g\in C_0^\infty(\rn{d})$.
Then the $L^p$ regularity problem is uniquely solvable.
\end{thm}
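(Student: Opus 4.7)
The plan is to combine the bilinear estimate with the $L^2$ regularity theory of \cite{verchota:polyharmonic,pv:higherOrder} to derive an a priori $L^p$ estimate on the $L^2$ regularity solution with smooth data, and then extend the solution operator by density to $WA^{2,p}$. Uniqueness will come from Theorem \ref{regularity-uniqueness} once the companion fact of this section (that the bilinear estimate also implies unique solvability of $(D)_q$) is established in parallel; the companion theorem provides the $L^q$ Dirichlet solution $V^x$ with data $D^\alpha\Gamma^x$ required in Theorem \ref{regularity-uniqueness}.

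The first key step is to extract a boundary $L^p$ bound on $\Delta u$ from the bilinear estimate. For $f\in C_0^\infty(\mathbb{R}^d)$, let $u$ be the $L^2$ regularity solution with data $\dot f=(D^\alpha f|_{\partial\Omega})_{|\alpha|\le 2}$. Since $\Delta u$ is harmonic with $(\Delta u)^*\in L^2(\partial\Omega)$, Lemma \ref{sobolevLimit} gives the boundary representation
$$\Lambda[f,g]=\ell_u(g|_{\partial\Omega})-\int_{\partial\Omega}\Delta u\cdot \partial_n g\,d\sigma,\qquad \ell_u:=\partial_n\Delta u.$$
Now test against $g\in C_0^\infty(\mathbb{R}^d)$ with $g|_{\partial\Omega}=0$ and $\partial_n g|_{\partial\Omega}=h$ for an arbitrary smooth $h$ on $\partial\Omega$. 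Then $\ell_u(g|_{\partial\Omega})=0$ and $\|\nabla g\|_{L^q(\partial\Omega)}=\|h\|_{L^q(\partial\Omega)}$, so the bilinear estimate (\ref{bilinear-estimate}) yields
$$\left|\int_{\partial\Omega}\Delta u\cdot h\,d\sigma\right|\le C\|\dot f\|_{WA^{2,p}}\|h\|_{L^q(\partial\Omega)}.$$
Density of smooth $h$ in $L^q(\partial\Omega)$ together with duality then gives $\|\Delta u\|_{L^p(\partial\Omega)}\le C\|\dot f\|_{WA^{2,p}}$.

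The second step upgrades this boundary bound to a nontangential maximal estimate on $\nabla^2 u$. Since $\Delta u$ is harmonic, for $p$ in the range of solvability of the $L^p$ Dirichlet problem for Laplace's equation in Lipschitz domains (valid for $2-\varepsilon<p<\infty$ by Dahlberg-Jerison-Kenig-Verchota) we directly obtain $\|(\Delta u)^*\|_p\le C\|\dot f\|_{WA^{2,p}}$. Decomposing $\nabla^2 u|_{\partial\Omega}$ into its tangential second derivatives (controlled by $\nabla_t\nabla f\in L^p$ via the $WA^{2,p}$ norm of $\dot f$) plus the normal-normal component (recovered algebraically from $\Delta u|_{\partial\Omega}$ and the tangential terms), and combining with Lemma \ref{nonTangLimitLemma} and the biharmonic layer-potential representation of $u$, we obtain $\|(\nabla^2 u)^*\|_p\le C\|\dot f\|_{WA^{2,p}}$ together with the analogous lower-order bounds. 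Density of smooth data in $WA^{2,p}$ then extends the $L^2$ regularity solution operator continuously to give existence.

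The principal obstacle is the range $1<p<2-\varepsilon$, where the $L^p$ Dirichlet problem for Laplace's equation need not be solvable and the upgrading step breaks down. For such $p$ one must replace the direct Dahlberg-Jerison-Kenig-Verchota step by an atomic or Hardy-space decomposition of $WA^{2,p}$: localize $\dot f$ to boundary atoms, apply the bilinear estimate together with the $L^2$ theory to each atom to obtain uniform local $L^p$ bounds (in the spirit of the localization arguments used in Sections 5 and 6), and sum. A further technical hurdle is that Lemma \ref{sobolevLimit} is only stated for $p\ge 2$; for $p<2$ one carries out the boundary-form computation on the $C^\infty$ approximating domains $\Omega_m\uparrow\Omega$ (where Green's identity is directly available) and passes to the limit using the uniform $L^p$ bound afforded by the bilinear estimate.
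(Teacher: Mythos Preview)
Your first step---extracting $\|\Delta u\|_{L^p(\partial\Omega)}$ by testing the bilinear form against $g$ with $g|_{\partial\Omega}=0$---is a nice observation, but the second step contains a genuine gap that is not repaired by the workarounds you sketch. Even granting $(\Delta u)^*\in L^p$ (which already requires the Laplace $(D)_p$ and so fails for $p<2-\varepsilon$ in a general Lipschitz domain), combining this with $\nabla_t\nabla f\in L^p$ only yields $\nabla^2 u|_{\partial\Omega}\in L^p$, not $(\nabla^2 u)^*\in L^p$. Lemma~\ref{nonTangLimitLemma} gives a trace-to-maximal-function bound at the level of $\nabla^3 u$ (the top-order trace for $\Delta^2$), not $\nabla^2 u$; and invoking ``the biharmonic layer-potential representation of $u$'' to close this gap is circular, since inverting the relevant boundary operators in $L^p$ is precisely the $L^p$ regularity theory you are trying to establish. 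Your proposed atomic/Hardy-space route for small $p$ faces the same obstruction: it would still have to produce $(\nabla^2 u)^*$ bounds from boundary data, not merely boundary $L^p$ bounds.

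The paper bypasses this entirely by a mechanism that works uniformly for all $1<p<\infty$ and needs no auxiliary solvability result. Starting from the Green representation of $D_jD_k u(x)$ in terms of $\Gamma^x$ on the approximating domains $\Omega_m$, an algebraic identity converts normal derivatives into tangential ones, and in the limit one obtains
\[
D_jD_k u(x)=I_{jk}(x)-\Lambda\big[g,\,D_jD_k\Gamma^x\big],
\]
where $I_{jk}$ is a boundary singular integral with density $\nabla_t\nabla g$, so $\|(I_{jk})^*\|_p\le C\|\nabla_t\nabla g\|_p$ by standard estimates. For the second term, the bilinear estimate says that $\dot f\mapsto\Lambda[g,f]$ is a bounded linear functional on $WA^{1,q}(\partial\Omega)$; embedding $WA^{1,q}$ into a product of $L^q$ spaces and applying Hahn--Banach/Riesz represents this functional by functions $G_0,G_{s\ell},H\in L^p(\partial\Omega)$, so that $\Lambda[g,D_jD_k\Gamma^x]$ becomes a finite sum of explicit layer-type potentials with $L^p$ densities. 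The bound $\|(\nabla^2 u)^*\|_p\le C\{\|\nabla_t\nabla g\|_p+\|\nabla g\|_p+\|g\|_p\}$ then follows directly from singular-integral/maximal-function estimates on these kernels, with no case distinction in $p$. Uniqueness is handled exactly as you say, via the companion existence argument for $(D)_q$ and Theorem~\ref{regularity-uniqueness}.
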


\begin{proof}
To establish the existence in $(R)_p$,
it suffices to show that if $g\in C_0^\infty(\rn{d})$ and
$u$ is the unique solution of the $L^2$ regularity problem with boundary
data $D^\alpha g$, then estimate (\ref{regularity-estimate}) holds.
The existence of solutions with data in
$\mbox{\emph{WA}}^{2,p}(\partial\Omega)$
follows from this by a standard approximation argument
(see e.g. \cite{kilty:higherOrderRegularity} for the case of second order
elliptic systems).

Let $\Omega_m\uparrow\Omega$.
It follows from the Green's identity (\ref{Green's-identity}) that
\begin{equation}\label{Green's-representation}
u(x)=\int_{\partial\Omega_m} \left\{
\frac{\partial}{\partial n} \Delta \Gamma^x \cdot u
-\Delta \Gamma^x \cdot \frac{\partial u}{\partial n}\right\} d\sigma
+\int_{\partial\Omega_m}
\left\{ \frac{\partial \Gamma^x}{\partial n}
\cdot \Delta u
-\Gamma^x \cdot \frac{\partial}{\partial n}\Delta u\right\} d\sigma.
\end{equation}
This implies that
\begin{equation}\label{Green's-representation-1}
\aligned
 D_jD_k u(x)= &
\int_{\partial\Omega_m} \left\{
\frac{\partial}{\partial n} \Delta D_jD_k \Gamma^x \cdot u
-\Delta D_jD_k \Gamma^x \cdot \frac{\partial u}{\partial n}\right\} d\sigma\\
&\quad\quad\quad
+\int_{\partial\Omega_m}
\left\{ \frac{\partial }{\partial n} D_jD_k \Gamma^x
\cdot \Delta u
-D_jD_k \Gamma^x \cdot \frac{\partial}{\partial n}\Delta u\right\} d\sigma.
\endaligned
\end{equation}

To deal with the first term in the right hand side of (\ref{Green's-representation-1}),
we use the following identity,
\begin{equation}\label{identity}
\aligned
& n_\ell D_\ell \Delta D_j D_k \Gamma^x \cdot u -\Delta D_j D_k \Gamma^x \cdot n_\ell D_\ell u\\
&=(n_\ell D_j-n_j D_\ell) \big\{ D_\ell \Delta D_k \Gamma^x\cdot u\big\}
-(n_\ell D_k-n_k D_\ell) \big\{ D_\ell \Delta \Gamma^x \cdot D_j u\big\}\\
&\quad\quad\quad
+D_\ell \Delta \Gamma^x \cdot (n_\ell D_k -n_kD_\ell)D_j u
+(n_jD_\ell -n_\ell D_j) \big\{ \Delta D_k \Gamma^x \cdot D_\ell u\big\}\\
&\quad\quad\quad
-\Delta D_k \Gamma^x\cdot (n_j D_\ell -n_\ell D_j) D_\ell u,
\endaligned
\end{equation}
which may be verified by a direct computation, using the fact that $\Delta^2 \Gamma^x=0$ in $\rn{d}\setminus
\{ x\}$.

In view of (\ref{identity}), we may use integration by parts to obtain
\begin{equation}\label{Green's-representation-2}
\aligned
D_jD_k u(x)= &
\int_{\partial \Omega_m} D_\ell \Delta \Gamma^x \cdot (n_\ell D_k -n_k D_\ell) D_j u\, d\sigma\\
&\quad \quad\quad\quad
-\int_{\partial\Omega_m}
\Delta D_k \Gamma^x \cdot (n_j D_\ell -n_\ell D_j)D_\ell u \, d\sigma\\
&
+\int_{\partial\Omega_m} \left\{ \frac{\partial}{\partial n}
D_jD_k \Gamma^x \cdot \Delta u
-D_jD_k \Gamma^x \cdot \frac{\partial}{\partial n}\Delta u\right\} \, d\sigma.
\endaligned
\end{equation}
We now let $m\to \infty$ in (\ref{Green's-representation-2}).
 It follows from (\ref{bilinear-form-1})
that
\begin{equation}\label{Green's-representation-3}
D_jD_k u(x)= I_{jk}(x)-\Lambda[g, D_jD_k \Gamma^x]
\end{equation}
where
$$
I_{jk} (x)=
\int_{\partial \Omega} D_\ell \Delta \Gamma^x \cdot (n_\ell D_k -n_k D_\ell) D_j u\, d\sigma
-\int_{\partial\Omega}
\Delta D_k \Gamma^x \cdot (n_j D_\ell -n_\ell D_j)D_\ell u \, d\sigma
$$
and $\Lambda[\, , \, ]$ is the bilinear form defined by (\ref{defnBilinearForm}).
Clearly,
\begin{equation}\label{estimate of Ijk}
\|(I_{jk})^*\|_p \le C\, \|\nabla_t \nabla u\|_p.
\end{equation}

To handle the second term in the right hand side of (\ref{Green's-representation-3}),
we  observe that the bilinear estimate (\ref{bilinear-estimate}) implies that
$$
T_g (\dot{f}) =\Lambda[g, f]
$$
defines a bounded linear functional on $\mbox{\emph{WA}}^{1,q}(\partial\Omega)$
which can be identified with $W^{1,q}(\partial\Omega)\times L^q(\partial\Omega)$
by the map $(f_0, f_1, \dots, f_d)\to (f_0, n_1 f_1 +\cdots +n_d f_d)$.
It follows that there exist functions
$G_0$, $G_{s\ell}$, $H$ in $L^p(\partial\Omega)$ such that
\begin{equation}\label{reprentation-4}
T_g (\dot{f})
=\int_{\partial\Omega} \left\{
G_{s\ell} (n_s D_\ell -n_\ell D_s)f +G_0 f +H \frac{\partial f}{\partial n}\right\} \, d\sigma
\end{equation}
and
\begin{equation}\label{norm-estimate}
\| G_{s\ell}\|_p +\| G\|_p +\| H\|_p \le C\, \| T_g\|
\le C\, \big\{ \| \nabla_t \nabla g\|_p +\| \nabla g\|_p +\| g\|_p\big\}.
\end{equation}
As a consequence, we obtain
$$
\Lambda[g, D_jD_k \Gamma^x]
=\int_{\partial\Omega} \left\{
G_{s\ell} (n_s D_\ell -n_\ell D_s)D_jD_k \Gamma^x
 +G_0 D_jD_k \Gamma^x
 +H \frac{\partial }{\partial n} D_jD_k \Gamma^x \right\} d\sigma
$$
and
\begin{equation}\label{norm-estimate-1}
\|(\Lambda[g, D_jD_k \Gamma^x])^*\|_p
\le C \bigg \{
 \| \nabla_t \nabla g\|_p +\| \nabla g\|_p +\| g\|_p\bigg\}.
\end{equation}

In view of (\ref{Green's-representation-3}), (\ref{estimate of Ijk}) and (\ref{norm-estimate-1}),
we have proved that
$$
\|(\nabla^2 u)^*\|_p \le C\bigg\{ \|\nabla_t\nabla g\|_p +\| \nabla g\|_p +\| g\|_p\bigg\}.
$$
The estimates of $\| (\nabla u)^*\|_p $ and $\|(u)^*\|_p$ are much easier and hence omitted.

Finally we remark that in the proof of the next theorem, it will be shown that
the bilinear estimate (\ref{bilinear-estimate}) implies the existence of solutions
in $(D)_q$. By Theorem \ref{regularity-uniqueness} this gives the uniqueness in $(R)_p$.
The proof of Theorem  \ref{bilinear-imply-regularity} is now complete.
\end{proof}

\begin{thm}\label{bilinear-imply-Dirichlet}
Let $1<q<\infty$ and $\frac{1}{p}+\frac{1}{q}=1$.
Suppose the bilinear estimate (\ref{bilinear-estimate})
holds for any $f,g\in C_0^\infty(\rn{d})$.
Then the $L^q$ Dirichlet problem in $\Omega$ is uniquely solvable.
\end{thm}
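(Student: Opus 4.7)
The plan splits the theorem into existence and uniqueness. For uniqueness, Theorem~\ref{bilinear-imply-regularity}, just established, already delivers existence of $L^p$ regularity solutions; in particular, for every $x\in\Omega$ there is a solution $W^x$ of the regularity problem with boundary data $D^\alpha\Gamma^x$. Theorem~\ref{Dirichlet-uniqueness} then gives uniqueness in $(D)_q$. The substantive work is existence, which I would handle by proving the a priori bound $\|(\nabla u)^*\|_q + \|(u)^*\|_q \le C\{\|\nabla f\|_q + \|f\|_q\}$ for smooth data $f\in C_0^\infty(\mathbb{R}^d)$, with $u$ the $L^2$ regularity solution having boundary data $D^\alpha f$, and then extending by density to arbitrary $\dot f \in WA^{1,q}(\partial\Omega)$.

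In direct parallel to Theorem~\ref{bilinear-imply-regularity}, I would apply Green's identity (\ref{Green's-identity}) on $\Omega_m\uparrow\Omega$ with $v=\Gamma^x$, differentiate in $x$, and pass to the limit using (\ref{bilinear-form-1}) with $w = D_k\Gamma^x$, producing
\[
D_k u(x) = J_k(x) - \Lambda\bigl[f,\,D_k\Gamma^x|_{\partial\Omega}\bigr],
\]
where $J_k(x)$ is the boundary integral of the Dirichlet data against the kernels $\partial_n\Delta D_k\Gamma^x$ and $\Delta D_k\Gamma^x$. The first kernel is of order $|y-x|^{-d}$ and must be tamed, so I would derive an algebraic identity in the spirit of (\ref{identity}): using $\Delta^2\Gamma^x = 0$ off the diagonal one obtains $n_\ell D_\ell\Delta D_k\Gamma^x = (n_\ell D_k - n_k D_\ell)(D_\ell\Delta\Gamma^x)$, which rewrites the integrand of $J_k$ as a tangential divergence $(n_\ell D_k - n_k D_\ell)\{D_\ell\Delta\Gamma^x\cdot u\}$ plus two pieces of Calder\'on--Zygmund order $|y-x|^{1-d}$ paired against the tangential derivative $(n_\ell D_k - n_k D_\ell)u$ and the normal derivative $\partial_n u$ of the Dirichlet data. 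The tangential divergence integrates to zero on each closed smooth $\partial\Omega_m$, and standard nontangential maximal function estimates yield $\|(J_k)^*\|_q \le C\{\|\nabla f\|_q + \|f\|_q\}$.

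For the bilinear-form term I would exploit the symmetry $\Lambda[f,g]=\Lambda[g,f]$ together with the bilinear estimate (\ref{bilinear-estimate}) to observe that, with $f$ held fixed, the map $g\mapsto\Lambda[f,g]$ is a bounded linear functional on $WA^{2,p}(\partial\Omega)$ of operator norm at most $C\{\|\nabla f\|_q + \|f\|_q\}$. Extending by Hahn--Banach to $\prod_{|\alpha|\le 2} L^p(\partial\Omega)$ and invoking the Riesz representation theorem---the analogue at the $WA^{2,p}$ level of (\ref{reprentation-4})---produces functions $h_\alpha \in L^q(\partial\Omega)$ for $|\alpha|\le 2$ with $\sum_{|\alpha|\le 2}\|h_\alpha\|_q \le C\{\|\nabla f\|_q + \|f\|_q\}$ and
\[
\Lambda[f,g] = \sum_{|\alpha|\le 2}\int_{\partial\Omega} h_\alpha\, D^\alpha g\,d\sigma.
\]
Evaluated at $g = D_k\Gamma^x|_{\partial\Omega}$, this expresses $\Lambda[f, D_k\Gamma^x]$ as a sum of singular integrals with kernels $D^\alpha D_k\Gamma(y-x)$ of worst order $|y-x|^{1-d}$ (when $|\alpha|=2$), whose nontangential maximal functions are $L^q$-bounded by classical Calder\'on--Zygmund theory on Lipschitz surfaces. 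Together with the $J_k$ bound and the easier argument for $(u)^*$, this completes the $L^q$ estimate on smooth data, and a standard approximation argument extends solvability to all data in $WA^{1,q}(\partial\Omega)$.

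The main obstacle is in these two parallel representation arguments. The first is to produce an algebraic identity that exhibits the order $|y-x|^{-d}$ kernel in $J_k$ as a genuine tangential divergence on each $\partial\Omega_m$ plus Calder\'on--Zygmund pieces, so that the divergence integrates to zero and only benign kernels survive. The second is to confirm that the Hahn--Banach/Riesz representation for $(WA^{2,p})^*$ yields exactly the kernels $D^\alpha D_k\Gamma(y-x)$, $|\alpha|\le 2$, on which the nontangential maximal function estimates apply uniformly in $x\in\Omega$. These are the Dirichlet-problem counterparts, involving second-order Whitney data, of the identity (\ref{identity}) and the first-order Whitney representation (\ref{reprentation-4}) used in the proof of Theorem~\ref{bilinear-imply-regularity}.
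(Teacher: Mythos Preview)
Your proposal is correct and follows essentially the same route as the paper: uniqueness via Theorem~\ref{Dirichlet-uniqueness} using the existence part of $(R)_p$, and existence via the representation $D_j u(x)=I_j(x)+\Lambda[f,D_j\Gamma^x]$ obtained from Green's identity after the tangential-divergence identity $\partial_n\Delta D_j\Gamma^x\cdot u=(n_kD_j-n_jD_k)\{D_k\Delta\Gamma^x\cdot u\}-D_k\Delta\Gamma^x\cdot(n_kD_j-n_jD_k)u$, followed by Hahn--Banach/Riesz representation of $g\mapsto\Lambda[f,g]$ and Calder\'on--Zygmund estimates. The only cosmetic difference is that the paper embeds into a product of $L^p$ spaces via the map $\dot g\mapsto\big(g_\alpha,(n_iD_k-n_kD_i)g_\alpha\big)_{|\alpha|\le 1}$ on the space $X^{2,p}$ (whose norm is exactly the right-hand factor in (\ref{bilinear-estimate})), whereas you embed $WA^{2,p}$ via $\dot g\mapsto(g_\alpha)_{|\alpha|\le 2}$; since $\|\dot g\|_{X^{2,p}}\le C\|\dot g\|_{WA^{2,p}}$ both embeddings produce third-order kernels $\nabla^3\Gamma^x$ of order $|y-x|^{1-d}$ and the resulting nontangential estimates are identical.
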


\begin{proof}
It follows from the proof of Theorem \ref{bilinear-imply-regularity} that
the bilinear estimate (\ref{bilinear-estimate}) implies the
existence of solutions in $(R)_p$.
By Theorem \ref{Dirichlet-uniqueness} this gives the uniqueness in $(D)_q$.

To establish the existence in $(D)_q$, we will show that for any $f\in C_0^\infty(\rn{d})$,
the unique solution of the $L^2$ regularity problem with boundary data
$D^\alpha f$ satisfies the estimate
\begin{equation}\label{Dirichlet-estimate-1}
\|(\nabla u)^*\|_q
\le C\big\{ \| \nabla f\|_q +\| f\|_q \big\}.
\end{equation}
The existence of solutions with general data in $\mbox{\emph{WA}}^{1,q}(\partial\Omega)$
follows by a standard approximation argument.

It follows from (\ref{Green's-representation}) that
\begin{equation}\label{Green's-representation-4}
\aligned
 D_j u(x)= &
-\int_{\partial\Omega_m} \left\{
\frac{\partial}{\partial n} \Delta D_j \Gamma^x \cdot u
-\Delta D_j \Gamma^x \cdot \frac{\partial u}{\partial n}\right\} d\sigma\\
&\quad\quad\quad
-\int_{\partial\Omega_m}
\left\{ \frac{\partial }{\partial n} D_j \Gamma^x
\cdot \Delta u
-D_j \Gamma^x \cdot \frac{\partial}{\partial n}\Delta u\right\} d\sigma.
\endaligned
\end{equation}
To handle the first term in the right hand side of (\ref{Green's-representation-4}),
we use the identity
$$
\frac{\partial}{\partial n}\Delta D_j \Gamma^x \cdot u
=(n_k D_j -n_j D_k)\big\{ D_k \Delta \Gamma^x \cdot u\big\}
-D_k \Delta \Gamma^x \cdot (n_kD_j -n_j D_k)u
$$
and integration by parts to obtain
\begin{equation}\label{Green's-representation-5}
\aligned
 D_j u(x)= &
\int_{\partial\Omega_m} \left\{
D_k \Delta \Gamma^x \cdot (n_kD_j-n_j D_k)u +D_j \Delta \Gamma^x \cdot \frac{\partial u}{\partial n}
\right\} d\sigma\\
&\quad\quad\quad
-\int_{\partial\Omega_m}
\left\{ \frac{\partial }{\partial n} D_j \Gamma^x
\cdot \Delta u
-D_j \Gamma^x \cdot \frac{\partial}{\partial n}\Delta u\right\} d\sigma.
\endaligned
\end{equation}
Letting $m\to\infty$ in (\ref{Green's-representation-5}) gives
\begin{equation}\label{Green's-representation-6}
 D_j u(x)= I_j(x)
+\Lambda[f, D_j \Gamma^x],
\end{equation}
where
$$
I_j(x)=\int_{\partial\Omega} \left\{
D_k \Delta \Gamma^x \cdot (n_kD_j-n_j D_k)u +D_j \Delta \Gamma^x \cdot \frac{\partial u}{\partial n}
\right\} d\sigma.
$$
It is easy to see that $\|(I_j)^*\|_q \le C\, \| \nabla f\|_q$.

To estimate the nontangential maximal function of the second term in the right hand side of
(\ref{Green's-representation-6}), we observe that by the bilinear estimate (\ref{bilinear-estimate}),
$$
S_f(g) =\Lambda[f, g]=\Lambda[g,f]
$$
defines a bounded linear functional on the space $X^{2,p}(\partial\Omega)$ and its norm is bounded by
$C\, \{ \|\nabla f\|_q +\| f\|_q\}$. Here
$X^{2,p}(\partial\Omega)$ is the completion of
$$
\big\{ \dot{g}=(g_\alpha)_{|\alpha|\le 1}
=(D^\alpha g|_{\partial\Omega})_{|\alpha|\le 1}: \ g\in C_0^\infty(\rn{d})\big\}
$$
under the norm
$$
\| \dot{g}\|_{X^{2,p}(\partial\Omega)}
=\sum_{|\alpha|=1} \|\nabla_t g_\alpha\|_p
+\sum_{|\alpha|\le 1} \| g_\alpha\|_p.
$$
We further note that using the map
$$
\dot{g}=(g_\alpha)_{|\alpha|\le 1}
\to \big( g_\alpha, (n_iD_k-n_k D_i)g_\alpha\big)_{|\alpha|\le 1, 1\le i<k\le d},
$$
the space $X^{2,p}(\partial\Omega)$ may be regarded as a subspace of
$Y=L^p(\partial\Omega)\times \cdots \times L^p(\partial\Omega)$.
By the Hahn-Banach Theorem, $S_f$ extends to a bounded linear functional on $Y$.
It follows that there exist
functions $H_\alpha, H_{\alpha, i, k}$ in $L^q(\partial\Omega)$ such that
$$
S_f (\dot{g})
=\sum_{|\alpha|\le 1} \int_{\partial\Omega} g_\alpha H_\alpha\, d\sigma
+\sum_{\alpha, i, k}
\int_{\partial\Omega}
(n_iD_k-n_kD_i)g_\alpha \cdot H_{\alpha, i, k} \, d\sigma
$$
and
$$
\sum_{|\alpha|\le 1}\| H_\alpha \|_q
+\sum_{\alpha, i, k} \| H_{\alpha, i, k} \|_q
\le C\, \| S_f \|
\le C\bigg\{
\|\nabla f\|_q +\| f\|_q\bigg\}.
$$
Thus
$$
\Lambda [f, D_i \Gamma^x]
=\sum_{|\alpha|\le 1} \int_{\partial\Omega} D^\alpha D_i \Gamma^x \cdot H_\alpha\, d\sigma
+\sum_{\alpha, i, k}
\int_{\partial\Omega}
(n_iD_k-n_kD_i)D^\alpha D_i \Gamma^x \cdot H_{\alpha, i, k} \, d\sigma
$$
and as a consequence, we obtain
\begin{equation}\label{estimate of I}
\| \big(\Lambda[f, D_i \Gamma^x]\big)^*\|_q
\le  C \bigg\{
\sum_{|\alpha|\le 1}\| H_\alpha \|_q
+\sum_{\alpha, i, k} \| H_{\alpha, i, k} \|_q\bigg\}
\le C\bigg\{
\|\nabla f\|_q +\| f\|_q\bigg\}.
\end{equation}
This, together with the estimate for $I_j(x)$, gives the desired estimate (\ref{Dirichlet-estimate-1}).
\end{proof}

\section{Proof of Theorems \ref{Main-Theorem-1}, \ref{self-improving} and \ref{convex-theorem}}

By combining Theorems \ref{Dirichlet-imply-bilinear}, \ref{regularity-bilinear},
\ref{bilinear-imply-regularity} and \ref{bilinear-imply-Dirichlet}, we obtain
Theorem \ref{Main-Theorem-1} and thus Theorem \ref{Main-Theorem}.

It follows from Theorem 1.1 in \cite{shen:necsuff} that for any $q>2$, $(D)_q$ in $\Omega$
is solvable if and only if there exist $C_0>0$ and $r_0>0$ such that
for any $Q\in \partial\Omega$ and $0<r<r_0$,
the weak reverse H\"older condition
\begin{equation}\label{reverse-Holder-1}
\left\{\frac{1}{r^{d-1}}
\int_{I(Q,r)} |(\nabla u)^*|^q \, d\sigma\right\}^{1/q}
\le C_0
\left\{\frac{1}{r^{d-1}}
\int_{I(Q,2r)} |(\nabla u)^*|^2 \, d\sigma\right\}^{1/2},
\end{equation}
holds for any biharmonic function $u$ in $\Omega$ with the properties
that $(\nabla u)^*\in L^2(\partial\Omega)$ and $u=|\nabla u|=0$
on $I(Q,3r)$. Similarly, by Theorem 1.1 in \cite{kilty:higherOrderRegularity},
given any $p>2$, $(R)_p$ in $\Omega$ is solvable if and only if
there exist $C_1>0$ and $r_1>0$ such that
for any $Q\in \partial\Omega$ and $0<r<r_1$,
the weak reverse H\"older condition
\begin{equation}\label{reverse-Holder-2}
\left\{\frac{1}{r^{d-1}}
\int_{I(Q,r)} |(\nabla^2 u)^*|^p \, d\sigma\right\}^{1/p}
\le C_1
\left\{\frac{1}{r^{d-1}}
\int_{I(Q,2r)} |(\nabla^2 u)^*|^2 \, d\sigma\right\}^{1/2},
\end{equation}
holds for any biharmonic function $u$ in $\Omega$ with the properties
that $(\nabla^2 u)^*\in L^2(\partial\Omega)$ and $u=|\nabla u|=0$
on $I(Q,3r)$.
Hence, by Theorem \ref{Main-Theorem}, the solvability of $(D)_q$ for $q<2$
is equivalent to the weak reverse H\"older condition (\ref{reverse-Holder-2})
with $p=\frac{q}{q-1}$.
Theorem \ref{self-improving} now follows from the well known self-improving
property of the weak reverse H\"older inequalities.

Finally we give the proof of Theorem \ref{convex-theorem}.

\begin{proof} (of Theorem \ref{convex-theorem}).
Let $\Omega$ be a convex domain in $\rn{d}$, $d\ge 2$.
 It was proved in \cite{shen:biharmonic} that
$(D)_q$ in $\Omega$ is solvable for $2< q<\infty$. By Theorem \ref{Main-Theorem},
this implies that $(R)_p$ is solvable for $1<p<2$. To establish the solvability of
$(D)_q$ and $(R)_p$ for the remaining ranges, we need to use results in
\cite{kilty:higherOrderRegularity,mayboroda}.

Let $u$ be a solution of the $L^2$ regularity problem in $\Omega$.
Suppose that $u=|\nabla u|=0$ in $B(P,5r)\cap \Omega$ for some
$P\in \partial\Omega$ and $0<r<r_0$.
It follows from Theorem 1.1 in \cite{mayboroda} that
$$
\sup_{B(P,r)\cap\Omega} |\nabla^2 u|
\le \frac{C}{r^2} \left\{\frac{1}{r^d}
\int_{B(P,3r)\cap \Omega}|u|^2\, dx \right\}^{1/2}.
$$
This, together with the assumption $u=|\nabla u|=0$ on $B(P,5r)\cap \partial\Omega$,
 gives that
\begin{equation}
\label{reverseHolder}
\sup_{B(P,r)\cap \Omega} |\nabla^2 u|
\le C \left\{ \frac{1}{r^{d-1}}
\int_{B(P,3r)\cap \partial\Omega}
|(\nabla^2 u)^*|^2\, d\sigma\right\}^{1/2}.
\end{equation}
By Theorem 1.1 in \cite{kilty:higherOrderRegularity}, estimate (\ref{reverseHolder}) implies
the solvability of $(R)_p$ for any $2<p<\infty$.
In view of Theorem \ref{Main-Theorem}, we also
obtain the solvability of $(D)_q$ for $1<q<2$.
As a result, the $L^p$ Dirichlet and regularity problems in $\Omega$
are solvable for any $1<p<\infty$.
Finally we note that the weak maximum principle
(\ref{weak-max})
follows from the solvability of $(R)_p$ and $(D)_q$ for some $p>d-1$
and $q<\frac{d-1}{d-2}$,
by an argument of Pipher and Verchota (see e.g. \cite{pv:maximum}).
\end{proof}

\bibliography{ks32}

\providecommand{\bysame}{\leavevmode\hbox to3em{\hrulefill}\thinspace}
\providecommand{\MR}{\relax\ifhmode\unskip\space\fi MR }
\providecommand{\MRhref}[2]{%
  \href{http://www.ams.org/mathscinet-getitem?mr=#1}{#2}
}
\providecommand{\href}[2]{#2}
\begin{thebibliography}{10}

\bibitem{adolfsson-pipher}
V.~Adolfsson and J.~Pipher, \emph{The inhomogeneous {D}irichlet problem for
  {$\Delta^2$} in {L}ipschitz domains}, J. Func. Anal. \textbf{159} (1998),
  137--190.

\bibitem{cohen-gosselin}
J.~Cohen and J.~Gosselin, \emph{The {D}irichlet problem for the biharmonic
  equation in a bounded {$C^1$} domain in the plane}, Indiana Univ. Math. J.
  \textbf{32} (1983), 635--685.

\bibitem{dahlberg2}
B.~Dahlberg, \emph{On estimates of harmonic measure}, Arch. Ration. Mech. Anal.
  \textbf{65} (1977), 275--288.

\bibitem{dahlberg1}
\bysame, \emph{On the {P}oisson integral for {Lipschitz} and ${C}^1$-domains},
  Studia Math. \textbf{66} (1979), 13--24.

\bibitem{dahlberg}
B.~Dahlberg and C.~Kenig, \emph{Hardy spaces and the {N}eumann problem in
  ${L^p}$ for {L}aplace's equation in {L}ipschitz domains}, Ann. of Math
  \textbf{125} (1987), 437--466.

\bibitem{dahlberg4}
\bysame, \emph{{$L^p$} estimates for the three-dimensional system of
  elastostatics on {L}ipschitz domains}, Lecture Notes in Pure and Applied
  Mathematics (Cora Sadoesky, ed.), vol. 122, Dekker, 1990, pp.~631--634.

\bibitem{dkv:biharmonic}
B.~Dahlberg, C.~Kenig, and G.~Verchota, \emph{The {D}irichlet problem for the
  biharmonic equation in a {L}ipschitz domain}, Ann. Inst. Fourier (Grenoble)
  \textbf{36} (1986), no.~3, 109--135.

\bibitem{jerison}
D.~Jerison and C.~Kenig, \emph{The {N}eumann problem in {L}ipschitz domains},
  Bull. Amer. Math. Soc. (N.S.) \textbf{4} (1981), 203--207.

\bibitem{jerison-kenig}
D.~Jerison and C~Kenig, \emph{The inhomogeneous {D}irichlet problem in
  {L}ipschitz domains}, J. Func. Anal. \textbf{130} (1995), no.~1, 161--219.

\bibitem{kenig}
C~Kenig, \emph{Harmonic {A}nalysis {T}echniques for {S}econd {O}rder {E}lliptic
  {B}oundary {V}alue {P}roblems}, CBMS Regional Conference Series in Math.,
  vol.~83, AMS, Providence, RI, 1994.

\bibitem{kilty}
J.~Kilty, \emph{The {$L^p$} {D}irichlet problem for the {S}tokes system on
  {L}ipschitz domains}, Indiana Univ. Math. J. \textbf{58} (2009), no.~3,
  1211--1226.

\bibitem{kilty:higherOrderRegularity}
J.~Kilty and Z.~Shen, \emph{The {$L^p$} regularity problem on {L}ipschitz
  domains}, Trans. Amer. Math. Soc. (to appear).

\bibitem{mayboroda}
S.~Mayboroda and V.~Maz'ya, \emph{Boundedness of the {H}essian of a biharmonic
  function in a convex domain}, Comm. Partial Diff. Eqs. \textbf{33} (2008),
  1439--1454.

\bibitem{mazya-mitrea-shaposhnikova}
V.~Maz'ya, M.~Mitrea, and T.~Shaposhnikova, \emph{The {D}irichlet problem in
  {L}ipschitz domains for higher order elliptic systems with rough
  coefficients}, Preprint.

\bibitem{mitrea-mitrea}
I.~Mitrea and M.~Mitrea, \emph{{M}ultiple layer potentials for higher order
  elliptic boundary value problems}, Preprint.

\bibitem{mitrea-mitrea-pipher-wright}
I.~Mitrea, M.~Mitrea, J.~Pipher, and M.~Wright, \emph{Optimal estimates for the
  inhomogeneous problem for the bi-{L}aplacian in three-dimensional {L}ipschitz
  domains}, Preprint.

\bibitem{pv:area-integral}
J.~Pipher and G.~Verchota, \emph{Area integral estimates for the biharmonic
  operator in {L}ipschitz domains}, Trans. Amer. Math. Soc. \textbf{327}
  (1991), no.~2, 903--917.

\bibitem{pv:biharmonic}
\bysame, \emph{The {D}irichlet problem in ${L}^p$ for the biharmonic equation
  on {L}ipschitz domains}, Amer. J. Math. \textbf{114} (1992), 923--972.

\bibitem{pv:maximum}
\bysame, \emph{A maximum principle for biharmonic functions in {L}ipschitz and
  {$C^1$} domains}, Comment. Math. Helv. \textbf{68} (1993), 385--414.

\bibitem{pv:higherOrder}
\bysame, \emph{Dilation invariant estimates and the boundary {G}arding
  inequality for higher order elliptic operators}, Ann. of Math. \textbf{142}
  (1995), 1--38.

\bibitem{shen:stokes}
Z.~Shen, \emph{A note on the {D}irichlet problem for the {S}tokes system in
  {L}ipschitz domains}, Proc. Amer. Math. Soc. \textbf{123} (1995), no.~3,
  801--811.

\bibitem{shen:ellip}
\bysame, \emph{The ${L}^p$ {D}irichlet problem for elliptic systems on
  {L}ipschitz domains}, Math. Res. Lett. \textbf{13} (2006), no.~1, 143--159.

\bibitem{shen:necsuff}
\bysame, \emph{Necessary and sufficient conditions for the solvability of the
  ${L}^p$ {D}irichlet problem on {L}ipschitz domains}, Math. Ann. \textbf{336}
  (2006), no.~3, 697--724.

\bibitem{shen:biharmonic}
\bysame, \emph{On estimates of biharmonic functions on {L}ipschitz and convex
  domains}, J. Geom. Anal. \textbf{16} (2006), 721--734.

\bibitem{shen:boundary}
\bysame, \emph{The ${L}^p$ boundary value problems on {L}ipschitz domains},
  Adv. Math. \textbf{216} (2007), 212--254.

\bibitem{stein:singularintegrals}
E.~Stein, \emph{Singular {I}ntegrals and {D}ifferentiability {P}roperties of
  {F}unctions}, Princeton University Press, Princeton, 1970.

\bibitem{verchota}
G.~Verchota, \emph{Layer potentials and regularity for the {D}irichlet problem
  for {L}aplace's equation in {L}ipschitz domains}, J. Funct. Anal. \textbf{59}
  (1984), 572--611.

\bibitem{v:c-1}
\bysame, \emph{The {D}irichlet problem for the biharmonic equation in {$C^1$}
  domains}, Indiana Univ. Math. J. \textbf{36} (1987).

\bibitem{verchota:polyharmonic}
\bysame, \emph{The {D}irichlet problem for the polyharmonic equation in
  {L}ipschitz domains}, Indiana Univ. Math. J. \textbf{39} (1990), 671--702.

\bibitem{verchota:higherOrder}
\bysame, \emph{Potentials for the {D}irichlet problem in {L}ipschitz domains},
  Potential theory---{ICPT} 94 ({K}outy, 1994), de Gruyter, Berlin, 1996,
  pp.~167--187.

\bibitem{verchota:biharmonic}
\bysame, \emph{The biharmonic {N}eumann problem in {L}ipschitz domains}, Acta
  Math. \textbf{194} (2005), no.~2, 217--279.

\bibitem{wright}
M.~Wright and M.~Mitrea, \emph{Boundary value problems for the {S}tokes system
  in arbitrary {L}ipschitz domains}, Preprint.

\end{thebibliography}

\small
\noindent\textsc{Joel Kilty, Department of Mathematics,
Centre College, Danville, KY 40422}\\
\emph{E-mail address}: \texttt{jkilty@ms.uky.edu} \\

\noindent\textsc{Zhongwei Shen, Department of Mathematics,
University of Kentucky, Lexington, KY 40506}\\
\emph{E-mail address}: \texttt{zshen2@email.uky.edu} \\

\noindent \today

\end{document}